\newtheorem{definition}{Definition}[section]
\newcommand{\Lie}{\mathrm{Lie}}
\newcommand{\ls}{{\mathfrak{ls}}}
\newcommand{\pls}{{\mathfrak{pls}}}
\newcommand{\ff}{{\mathfrak{f}}}
\newcommand{\Dari}{{\mathrm{Dari}}}
\newcommand{\Darit}{{\mathrm{Darit}}}
\newcommand{\Z}{{\mathbb{Z}}}
\newcommand{\C}{{\mathbb{C}}}
\newcommand{\R}{{\mathbb{R}}}
\newcommand{\Q}{{\mathbb{Q}}}
\newcommand{\N}{{\mathbb{N}}}
\newcommand{\Ac}{{\mathcal{A}}}
\newcommand{\Gc}{{\mathcal{G}}}
\newcommand{\cA}{{\mathcal{A}}}
\newcommand{\cB}{{\mathcal{B}}}
\newcommand{\cC}{{\mathcal{C}}}
\newcommand{\cMZ}{{\mathcal{MZ}}}
\newcommand{\cNMZ}{{\mathcal{NMZ}}}
\newcommand{\cO}{{\mathcal{O}}}
\newcommand{\Ec}{{\mathcal{E}}}
\newcommand{\Bc}{{\mathcal{B}}}
\newcommand{\Zc}{{\mathcal{Z}}}
\newcommand{\weps}{\widetilde{\eps}}
\def\eps{{\varepsilon}}
\def\ff{{\mathfrak f}}
\def\fu{{\mathfrak u}}
\def\fv{{\mathfrak v}}
\def\fH{{\mathfrak H}}
\def\uk{{\underline{k}}}
\def\grt{{\mathfrak {grt}}}
\def\ds{{\mathfrak {ds}}}
\def\nz{{\mathfrak {nz}}}
\def\nmz{{\mathfrak {nmz}}}
\def\dd{{\mathrm {d}}}
\DeclareMathOperator{\Hom}{Hom}
\DeclareMathOperator{\SL}{SL}
\DeclareMathOperator{\Der}{Der}
\DeclareMathOperator{\ad}{ad}
\DeclareMathOperator{\Frac}{Frac}
\DeclareMathOperator{\cI}{\mathcal{I}}
\DeclareMathOperator{\U}{\mathcal{U}}
\theoremstyle{definition}
\newtheorem{dfn}{Definition}[section]
\newtheorem{rmk}[dfn]{Remark}
\theoremstyle{plain}
\newtheorem{prop}[dfn]{Proposition}
\newtheorem{thm}[dfn]{Theorem}
\newtheorem{lem}[dfn]{Lemma}
\newtheorem{coro}[dfn]{Corollary}
\newtheorem*{intcoro}{Corollary}
\newtheorem*{intthm}{Theorem}
\numberwithin{equation}{section}
\begin{document}

\title[Elliptic multizetas]{Elliptic multizetas and the elliptic double shuffle relations}

\author{Pierre Lochak}
\address{P.L.:  CNRS and Institut Math\'ematique de Jussieu, 
Universit\'e P. et M. Curie, 4 place Jussieu, F-75252 Paris Cedex 05\newline 
E-mail address: {\tt  pierre.lochak@imj-prg.fr}}

\author{Nils Matthes}
\address{N.M.: Fachbereich Mathematik (AZ)
Universit\"at Hamburg, Bundesstrasse 55, D-20146 Hamburg  \newline 
E-mail address: {\tt  nils.matthes@uni-hamburg.de}}

\address{Current address: Mathematical Institute, University of Oxford, Andrew Wiles Building, Radcliffe Observatory Quarter, Woodstock Road, Oxford OX26GG, United Kingdom \newline 
E-mail address: {\tt nils.matthes@maths.ox.ac.uk}}

\author{Leila Schneps}
\address{L.S.:  CNRS and Institut Math\'ematique de Jussieu, 
Universit\'e P. et M. Curie, 4 place Jussieu, F-75252 Paris Cedex 05\newline 
E-mail address: {\tt  leila.schneps@imj-prg.fr}}

\subjclass[2010]{11M32}
\keywords{Multiple zeta values, Grothendieck--Teichm\"uller theory, Moulds}

\begin{abstract}

We define an {\it elliptic generating series} whose coefficients, the
{\it elliptic multizetas}, are related to the elliptic analogues of 
multiple zeta values introduced by Enriquez as the coefficients of his
elliptic associator; both sets of coefficients lie in $\mathcal{O}(\fH)$, 
the ring of functions on the Poincar\'e upper half-plane $\fH$.
The elliptic multizetas generate a $\Q$-algebra $\Ec$ which is an 
elliptic analogue of the algebra of multiple zeta values.  
Working modulo $2\pi i$, we show that the algebra $\Ec$ 
decomposes into a geometric and an arithmetic part and study the precise
relationship between the elliptic generating series and the elliptic associator
defined by Enriquez.  We show that the elliptic 
multizetas satisfy a double shuffle type family of algebraic relations 
similar to the double shuffle relations satisfied by multiple zeta values.  
We prove that 
these elliptic double shuffle relations give all algebraic relations among elliptic 
multizetas if (a) the classical double shuffle relations give all algebraic 
relations among multiple zeta values and (b) the elliptic double shuffle Lie algebra has 
a certain natural semi-direct product structure analogous to that established by 
Enriquez for the elliptic Grothendieck-Teichm\"uller Lie algebra. 
\end{abstract}

\maketitle

\section{Introduction}
\label{sec:introduction}

\vspace{.3cm}
\subsection{Elliptic multizetas}
An elliptic analogue of the multiple zeta values first made an explicit appearance in 
Enriquez' article \cite{Enriquez:Emzv} under the name ``analogues elliptiques 
des nombres multizetas''.  They arise as coefficients of his elliptic 
associator constructed in \cite{Enriquez:EllAss}, which is closely related to 
the elliptic Knizhnik--Zamolodchikov--Bernard (KZB) equation 
\cite{CEE,LR} and to multiple elliptic polylogarithms \cite{BL,LR}; 
more recently, they have even found applications to computations in high energy 
physics \cite{BMMS}.  Taking the regularized limit $\tau \to i\infty$ of 
elliptic multizetas, one retrieves the classical multiple zeta values
\cite{Enriquez:Emzv,Matthes:Thesis}, which gives the 
explicit connection between the genus zero and genus one multizetas.
The idea of considering the graded $\Q$-algebra generated by these coefficients,
was introduced in \cite{BMS,Matthes:Thesis,Matthes:Edzv}, which provide some
explicit dimension results in depth 2.

Recall that the Drinfel'd associator $\Phi_{\rm KZ}$, first introduced in
\cite{Drinfeld}, is a power series in two
non-commutative variables\footnote{Throughout this paper we use the 
same definitions and conventions as \cite{Enriquez:EllAss}; in particular
$\Phi_{\rm KZ}$ is defined in \S 5.1.},
which is the generating series for the usual 
multiple zeta values, by the work of Le and Murakami \cite{LM}.  In analogy with this, Enriquez's elliptic associator,
which is defined as a pair of monodromies (cf.~\S 5.2 of 
\cite{Enriquez:EllAss}),
takes the form of a pair of group-like power series in two 
non-commutative variables $a$ and $b$:
$$A(\tau),B(\tau)\in \mathcal{O}(\mathfrak{H})\langle\!\langle a,b\rangle\!\rangle,$$
where $\mathcal{O}(\mathfrak{H})$, 
denotes the ring of holomorphic functions 
of one variable $\tau$ running through the Poincar\'e upper 
half-plane.  We call the coefficients of $A(\tau)$ and 
$B(\tau)$ {\it $A$-elliptic multizetas} and {\it $B$-elliptic multizetas}, 
or $A$-EMZs and $B$-EMZs.  The acronym EMZ stands for
{\it elliptic multizetas}; since they are functions of $\tau$ and not complex numbers, 
we drop the word ``values'' in the elliptic situation\footnote{These values, 
which are Enriquez' ``elliptic analogues of MZVs'', and the E-EMZs introduced
below, are very different from Brown's ``multiple modular values'' 
\cite{Brown:MMV}, which are complex numbers.}. 

\vspace{.2cm}
The main new object introduced in this article is a third power series
$$C(\tau)\in \mathcal{O}(\mathfrak{H})\langle\!\langle a,b\rangle\!\rangle$$ 
and its logarithm $E(\tau)={\rm log}\,C(\tau)$ (\S 3.1).  
The series $E(\tau)$ is
called the {\it elliptic generating series}, and its coefficients, in
$\mathcal{O}(\mathfrak{H})$, are called {\it $E$-elliptic multizetas} or 
$E$-EMZs.  We write $\mathcal{E}$, $\mathcal{A}$ and $\mathcal{B}$ for the 
vector spaces generated by the coefficients of $C(\tau)$, the $A$-EMZs 
and the $B$-EMZs respectively, which are by definition subspaces of 
$\mathcal{O}(\mathfrak{H})$.
We show in Lemma \ref{2piiEAB} that like $A(\tau)$ and $B(\tau)$, $C(\tau)$ is a
group-like power series, which implies that each of the three subspaces $\Ec$, $\Ac$ and $\Bc$ of $\mathcal{O}(\mathfrak{H})$ actually forms a $\Q$-algebra. 
In Lemma \ref{sameoldthing}, we show that the coefficients of $E(\tau)$,
the $E$-EMZs, form a system of algebra generators for $\mathcal{E}$.  This
is the generating system we will study in the rest of the article.

One of the main results in \cite{Enriquez:EllAss} concerning the power 
series $A(\tau)$, $B(\tau)$ is that they can be written in the 
form\footnote{Throughout this article we use the dot notation $\cdot$ to 
indicate the action of automorphisms or derivations on elements.}
$g(\tau)\cdot A$ and $g(\tau)\cdot B$, where $g(\tau)$ is
an automorphism of $\mathcal{O}(\mathfrak{H})\langle\!\langle a,b\rangle
\!\rangle$ (introduced in \S 5.1 of \cite{Enriquez:EllAss} but recalled
in \S 2 below), and $A$ and $B$ are power series in 
$\Zc[2\pi i]\langle\!\langle a,b\rangle\!\rangle$ (defined in \S 3.5 of 
\cite{Enriquez:EllAss} with the notation $A_+$, $A_-$, but recalled
in \S 3.1 below).  This property of the elliptic associator is the motivation 
for our definition of the power series $C(\tau)$ directly in the form
$g(\tau)\cdot C$, where $C$ is a group-like power series in 
$\Zc[2\pi i]\langle\!\langle a,b\rangle\!\rangle$ closely related to $A$ and $B$ (\S 3.1). Since $g(\tau)$ is an 
automorphism, the power series $E(\tau)= \log\,C(\tau)$ then naturally takes 
the form $g(\tau)\cdot E$ with $E=\log\,C\in \Zc[2\pi i]\langle\!\langle
a,b\rangle\!\rangle$.

We let $\Ec^{\rm geom}$ denote the $\Q$-algebra generated by the
coefficients of $g(\tau)$ (in a precise sense explained 
in \S 2). These coefficients lie in $\mathcal{O}(\mathfrak{H})$, and 
are realized as particular linear combinations of iterated integrals of 
Eisenstein series for $\SL_2(\Z)$ (see \cite{Brown:MMV,Manin:Iterated}). 
We note that for any ring $R$, $g(\tau)$ induces an automorphism of
$\bigl(\Ec^{\rm geom}\otimes R\bigr)\langle\!\langle a,b\rangle\!\rangle$.
We use this fact frequently below.

The structure of the $\Q$-algebra $\Ec^{\rm geom}$ is the main topic of \S 2.
It is related to the bigraded Lie algebra $\fu^{\rm geom}$ of the prounipotent 
radical of $\pi_1^{\rm geom}(MEM)$, where $MEM$ denotes the Tannakian category 
of universal mixed elliptic motives \cite{HM}.  More precisely, $\Ec^{\rm geom}$
is related to the Lie algebra $\fu$ which is the (completion of the) image of 
$\fu^{\rm geom}$ under the monodromy representation from $\fu^{\rm geom}$ 
to the Lie algebra of derivations of a free Lie algebra on two generators 
whose existence is shown in \cite{HM},~\S 22. The explicit generators of $\fu$ are well known, cf. Definition \ref{epses}. 

The first results of this article are summarized in the following theorem,
whose proof rests in large part on the $\C$-linear independence
of iterated integrals of Eisenstein series proved in
Theorem \ref{matthes} and its corollary \ref{cor:Eis}.\footnote{The second
author subsequently generalized this result to the case of arbitrary
quasimodular forms for ${\rm SL}_2(\Z)$ \cite{Matthes:IterIntQMod}.}

\begin{intthm}\label{introthm} 
(i) [Thm. \ref{duality}]
There is a natural isomorphism
\begin{equation*}
\Ec^{\rm geom}\cong {\U}(\fu)^\vee,
\end{equation*}
where $\U(\fu)^{\vee}$ is the graded dual\footnote{We refer to Section 1.5.1 for our (slightly non-standard) conventions concerning graded duals.} of the universal enveloping algebra 
of the Lie algebra $\fu$. In particular, $\Ec^{\rm geom}$ is a commutative, graded, Hopf  $\Q$-algebra.

\vspace{.2cm}
(ii) [Cor. \ref{indepcor}] The subalgebra of 
$\mathcal{O}(\mathfrak{H})$ generated by $\Ec^{\rm geom}$ and $\Zc[2\pi i]$ is 
isomorphic to the tensor product 
\begin{equation}\label{tprod}
\Ec^{\rm geom}\otimes_\Q \Zc[2\pi i].
\end{equation}
\end{intthm}
\vskip .2cm
\begin{intcoro} The $\Q$-algebras $\Ac$, $\Bc$ and $\Ec$ are isomorphic to subalgebras of the
tensor product \eqref{tprod}.
\end{intcoro}
\begin{proof} Since $A(\tau)$ has the form $g(\tau)\cdot A$, the
coefficients of $A(\tau)$ are algebraic expressions in elements of
$\Ec^{\rm geom}$ and $\Zc[2\pi i]$. The same holds for $B(\tau)$ and
$C(\tau)$.  The result then follows from (ii) of the Theorem.
\end{proof}

\subsection{Structure of the elliptic multizeta algebras mod $2\pi i$}
\vspace{.2cm}
For technical reasons linked to our use of Grothendieck-Teichm\"uller
theory and \'Ecalle's mould theory, we restrict our study of the three
different types of elliptic multizetas to objects to their reductions 
modulo $2\pi i$ in the following sense.

Let $\overline{\Zc}$ denote the quotient of $\Zc[2\pi i]$ by the ideal 
generated by $2\pi i$, which is isomorphic to the quotient of the algebra 
of multiple zeta values $\Zc$ by the ideal generated
by $\zeta(2)=-\frac{(2\pi i)^2}{24}$.  The quotient of
$\Ec^{\rm geom}\otimes_\Q \Zc[2\pi i]$ by the ideal generated by $1\otimes 2\pi i$ is
isomorphic to 
$$\Ec^{\rm geom}\otimes_\Q \overline{\Zc}.$$

\begin{definition} Let $\overline{\Ec}$ (resp.~$\overline{\Bc}$) denote
the image of the subalgebra $\Ec$ (resp.~$\Bc$) of 
$\Ec^{\rm geom}\otimes \Zc[2\pi i]$ in the
quotient $\Ec^{\rm geom}\otimes \overline{\Zc}$.  
Let the reduced power series $\overline{E}(\tau)$ (resp.~$\overline{B}(\tau)$) be obtained from $E(\tau)$ (resp.~$B(\tau)$) by reducing the coefficients
 from $\Ec$ to $\overline\Ec$ (resp.~from $\Bc$ to $\overline\Bc$).    
\end{definition}

The case of $\Ac$ is slightly different, because it follows from
the definition of $A(\tau)$ given in (\ref{AandB}) and (\ref{uffa}) that
the ring $\Ac$ lies in $\Q\cdot 1+\Ec^{\rm geom}\otimes
2\pi i\Zc[2\pi i]$, and therefore the image of this ring in the quotient
$\Ec^{\rm geom}\otimes \overline\Zc$ is just $\Q\cdot 1$. 
We get around this as follows.  
We set $A'=A^{\frac{1}{2\pi i}}$; this power series lies in $\Zc
\langle\!\langle a,b\rangle\!\rangle$ by the definition of $A$ (cf.~\eqref{AandB}).
We then set 
$$A'(\tau)=g(\tau)\cdot A'\in \bigl(\Ec^{\rm geom}\otimes \Zc[2\pi i]\bigr)\langle\!\langle a,b\rangle\!\rangle.$$

\begin{definition} Let $\Ac'\subset \Ec^{\rm geom}\otimes 
\Zc$ denote the $\Q$-algebra generated by the coefficients of 
$A'(\tau)$. Let $\overline{\Ac}$ denote the image of $\Ac'$ in the
reduced ring $\Ec^{\rm geom}\otimes \overline{\Zc}$. Let $\overline{A}'$
be the power series obtained from $A'$ by reducing the coefficients from
$\Zc$ to $\overline{\Zc}$, and $\overline{A}'(\tau)$ to be the power
series obtained from $A'(\tau)$ by reducing the coefficients from
$\Ec^{\rm geom}\otimes \Zc$ to $\Ec^{\rm geom}\otimes \overline{\Zc}$.
We have $\overline{A}'(\tau)=g(\tau)\cdot\overline{A}'$.
\end{definition}

The coefficients of the three reduced power series $\overline{E}(\tau)$, $\overline{B}(\tau)$ and 
$\overline{A}'(\tau)$, which generate the three subalgebras $\overline{\Ec}$, 
$\overline{\Bc}$ and $\overline{\Ac}$ of $\Ec^{\rm geom}\otimes \overline{\Zc}$,
 are called $\overline{E}$-EMZs, $\overline{B}$-EMZs and $\overline{A}$-EMZs. 

Our first goal is to compare the four algebras
$\overline{\Ec}$, $\overline{\Ac}$, $\overline{\Bc}$ and $\Ec^{\rm geom}
\otimes_\Q \overline{\Zc}$.  The element $2\pi i\tau\in 
\mathcal{O}(\mathfrak{H})$ plays a special role in this comparison. It
lies in $\Ec^{\rm geom}$ since it is the coefficient of $\eps_0$ in $g(\tau)$ 
(see (\ref{two}) below), so $2\pi i\tau\otimes 1$ lies in the tensor product 
$\Ec^{\rm geom}\otimes_\Q \overline{\Zc}$, but it does not lie in $\Ec$
or $\Ac$, although it does lie in $\Bc$.  Working mod $2\pi i$ allows us
to make a much more precise statement than the simple inclusion, which
also has the advantage of showing that the three reduced algebras are
highly non-trivial.

\begin{intthm}[Thm. \ref{tensprod}] We have the equalities\footnote{Here and in the rest of the article we usually write $2\pi i\tau$ for what is 
strictly speaking the element $2\pi i\tau\otimes 1\in \Ec^{\rm geom}\otimes_\Q \overline{\Zc}$.}
$$\overline{\Ec}[2\pi i\tau]= \overline{\Ac}[2\pi i\tau]
=\overline{\Bc}=\Ec^{\rm geom}
\otimes_\Q \overline{\Zc}.$$
\end{intthm}

As noted above, the elliptic generating series $E(\tau)$ has the form 
$g(\tau)\cdot E$; thus its reduction $\overline{E}(\tau)$
has the form $g(\tau)\cdot \overline{E}$ where the reduction 
$\overline{E}$ of $E$ has coefficients in $\overline{\Zc}$.  Throughout this
article, we consider the power series $\overline{\Phi}_{\rm KZ}$ obtained by
reducing the coefficients of $\Phi_{\rm KZ}$ from $\Zc$ to $\overline{\Zc}$.
This reduced power series, a priori an associator belonging to the 
torsor of associators $M(\overline{\Zc})$, can be considered as lying in
the group $GRT_1(\overline{\Zc})$ (defined in \cite{Drinfeld}, \S 5), since the associator relations become
equivalent to the group relations for any associator whose degree 2 part is 
zero (cf. Proposition 5.9 of \cite{Drinfeld}).  The key point (Theorem \ref{mainthm}) of the proof of the equality 
$\overline{\Ec}[2\pi i\tau]=
\Ec\otimes\overline{\Zc}$ is the fact that 
\begin{equation}\label{mainthing}
\overline{E}=\widetilde\Gamma(\overline{\Phi}_{\rm KZ}),
\end{equation}
where $\widetilde\Gamma$ is the composition
$$GRT_1(\overline{\Zc})\buildrel\Gamma\over\rightarrow 
GRT_{ell}(\overline{\Zc}) \buildrel\pi\over
\rightarrow \overline{\Zc}\langle\!\langle a,b\rangle\!\rangle,$$
where $\Gamma$ is Enriquez' section (\cite{Enriquez:EllAss}, \S 4), and the projection $\pi$ maps
an element $(\lambda,f,g_+,g_-)\in GRT_{ell}(\overline{\Zc})$ 
to the component $g_+\in \overline{\Zc}\langle\!\langle a,b\rangle\!\rangle$. 
In particular, we show that the 
coefficients of $\overline{E}$ generate all of $\overline{\Zc}$
(Corollary \ref{coro35}).
The ring $\overline{\Ec}$ is the $\Q$-algebra generated by the coefficients 
of $\overline{E}(\tau)$, which, as for $E(\tau)$, are all algebraic
expressions in the coefficients of $g(\tau)$ and those of $\overline{E}$.
The delicate part of the proof consists in showing that the coefficients of 
$\overline{E}(\tau)$ can be 
``untangled'' to separately recover a set of generators of $\overline{\Zc}$ and,
with the addition of $2\pi i\tau$, a set of generators of $\Ec^{\rm geom}$.
The same arguments hold for $\overline{\Ac}$ and $\overline{\Bc}$,
except that instead of \eqref{mainthing}, we use \cite{Matthes:Thesis}, Theorem 5.4.2, to show that the coefficients
of the arithmetic parts $\overline{A}'$ and $\overline{B}$ 
generate all of $\overline{\Zc}$ and the same
untangling argument.
Thanks to the
above theorem, we call the $\Q$-algebra $\Ec^{\rm geom}\otimes_\Q \overline{\Zc}$ the {\it $\Q$-algebra of elliptic multizetas (modulo $2\pi i$)}.

\vspace{.2cm}
\subsection{The elliptic double shuffle relations for $\overline{E}$-EMZs}
The theorem in the previous section shows that the $\overline{E}$-EMZs
together with $2\pi i\tau$ form a set of generators of $\Ec^{\rm geom}\otimes
\overline{\Zc}$, as do the $\overline{A}$-EMZs and the $\overline{B}$-EMZs
taken together.  These three sets of generators are quite different
from each other.  A natural question, given a set of generators of a
ring, is to try to establish the relations they satisfy, and if possible 
a complete set thereof.  The fact that the power series 
$\overline{A}(\tau)$ and $\overline{B}(\tau)$ are group-like provides some
relations; another family, the ``Fay relations'', is partially known for 
$\overline{A}$-EMZs (cf.~\cite{Matthes:Thesis}).  Enriquez gives a
complete set of {\it associator relations} satisfied by the elliptic associator,
derived from the fact that the power series $A(\tau)$ and $B(\tau)$ induce
an automorphism of the prounipotent 2-strand torus braid group.  However,
these relations mingle the $A$-EMZs and the $B$-EMZs along with genus zero
multiple zeta values, and do not provide 
separate relations for each generating set.  For the $\overline{E}$-EMZs, 
however, we can say more, both about explicit relations
satisfied by $\overline{E}(\tau)$, and about the question of whether these
relations may be a complete set.  

The third main result of this article concerns the relations satisfied by
$\overline{E}(\tau)$.  These relations, called the
{\it elliptic double shuffle relations}, are defined in \S 4.2 using Ecalle's
mould theory.  They are extremely simple, and very similar to the genus 0 double
shuffle relations\footnote{Shuffle and stuffle, also known as double m\'elange 
in Racinet's terminology \cite{Racinet:Doubles}, and symmetrality/symmetrility in 
\'Ecalle's terminology \cite{Ecalle:Flexion}.} satisfied by $\Phi_{\rm KZ}$, in 
contrast with the elliptic associator relations, which are much more complicated than the 
genus 0 associator relations satisfied by $\Phi_{\rm KZ}$.  In fact, the elliptic double 
shuffle relations are essentially transports of the usual double shuffle equations 
under a map that extends Enriquez' section $\gamma:\grt\rightarrow \grt_{ell}$.

\begin{intthm}[rough version; precise statement given in Thm. \ref{ellipticds}] The power series
$\overline{E}(\tau)$ satisfies a double family of {\bf elliptic double shuffle 
relations} closely related to the usual double shuffle relations.
\end{intthm}

The explicit determination of the transported relations relies on several difficult 
known results, in particular \'Ecalle's crucial theorem (\cite{Ecalle:Flexion} but see 
\cite{Schneps:ARI}, Theorem 4.6.1 for a complete proof),
together with the results of \cite{Schneps:Emzv}.  

\vspace{.2cm} 
It is natural to ask whether the elliptic double shuffle relations generate 
all algebraic relations between $\overline{E}$-EMZs. We show in \S 4.2 
that the elliptic double shuffle relations are a complete set in depth 2 
(Proposition~4.6), thanks to the fact that depth 2 is too small for the real 
multiple zeta values to occur.  In higher depth, however, we naturally 
encounter problems related to the unknown transcendence properties
of the real multiple zeta values, exactly as we do when conjecturing that the
usual double shuffle relations generate all algebraic relations between
multizeta values.  In Proposition~\ref{conjs}, we show that the elliptic double
shuffle relations do form a complete set of algebraic relations between 
$\overline{E}$-EMZs under the following familiar conjectures from multizeta 
theory:

\begin{enumerate}
\item[(a)] The double shuffle relations generate all algebraic relations among the 
multiple zeta values modulo $2\pi i$.  

\item[(b)] The elliptic double shuffle Lie algebra $\ds_{ell}$ \cite{Schneps:Emzv} is isomorphic to a semi-direct product $\ds_{ell} \cong \fu \rtimes \gamma(\ds)$, where $\ds$ is the usual double shuffle Lie algebra and $\gamma$ is the
extension of Enriquez' section to $\ds$ obtained using mould theory
(cf.~\cite{Schneps:Emzv}, end of \S 1).
\end{enumerate}

Conjecture (a) is a standard conjecture in multizeta theory (cf. \cite{IKZ}, Conjecture 1). It would imply strong transcendence results for multiple zeta values, and therefore seems out of reach at the moment. Conjecture (b), however, is purely algebraic, 
and may therefore be more tractable, although it still seems difficult. It would follow for example from 
Enriquez' generation conjecture (\cite{Enriquez:EllAss}, \S 10) together with 
the conjecture that $\grt_{ell} \subset\ds_{ell}$ (an elliptic version 
of Furusho's theorem \cite{Furusho:Associators}).

\vspace{.2cm}
The last question addressed in the paper, in \S 4.3, concerns a family of 
algebraic relations satisfied by the $\overline{A}$-EMZs (Theorem \ref{FS}), 
which are the 
{\it Fay relations} on $A$-EMZs studied in \cite{BMS,Matthes:Edzv},
but here considered mod $2\pi i$; we compare them to 
the closely related {\it push-neutrality} relations. These families
are identical in depth 2 (although not in higher depths). 
In \cite{BMS,Matthes:Edzv}, the depth\footnote{The depth is
called ``length'' in \cite{BMS,Matthes:Edzv}.} 2 Fay relations are given 
explicitly
and it is shown that the Fay and shuffle relations give a complete set of 
$\Q$-linear relations between $A$-EMZs in depth 2.
The possible completeness
of the relations in all depths (depending on conjectures such as those
cited above), the precise comparison between the algebras $\overline{\Ec}$
and $\overline{\Ac}$, and above all the lifting of the questions considered
here to the situation not modulo $2\pi i$ are all topics for further research.

\subsection{Outline of the article}
The contents of this paper are organized as follows. In \S 2, we introduce the 
algebra $\Ec^{\rm geom}$ of \textit{geometric elliptic multizetas}, 
describe their relation to iterated integrals of Eisenstein series, and prove
the crucial linear independence of iterated Eisenstein integrals, as well as
the relation between $\Ec^{\rm geom}$ and the Lie algebra $\fu$.
In \S 3 we construct the elliptic generating series $E(\tau)$ and define the
$E$-EMZs to be its coefficients, and $\Ec$ to be the $\Q$-algebra they
generate.  Passing modulo $2\pi i$, we prove the main structural 
result $\overline{\Ec}[2\pi i\tau]\simeq \Ec^{\rm geom}\otimes \overline{\Zc}$
and its analogues for $\overline{\Ac}$ and $\overline{\Bc}$.
In \S 4, we study the elliptic double shuffle equations satisfied by
the mod $2\pi i$ elliptic generating series $\overline{E}(\tau)$ (or more 
precisely, the linearized version satisfied by its Lie version), and give evidence for the completeness of the resulting
system of algebraic relations between the $\overline{E}$-EMZs.
Finally, we study a family of relations satisfied by $\overline{A'}(\tau)$.
The necessary background 
concerning moulds is briefly summarized in \S 4.1.

\subsection{Notation and conventions:}

\subsubsection{Graded duals of vector spaces}

If $V=\prod_{n\geq 0}V_n$ is a $k$-vector space, then define its associated graded to be 
\begin{equation}
V^{\rm gr}:=\bigoplus_{n\geq 0} V_n.
\end{equation}
If in addition $V_n$ is finite-dimensional for every $n$, then we define the \textit{graded dual} of $V$ to be
\begin{equation}
V^{\vee}:=(V^{\rm gr})^{\vee}=\bigoplus_{n\geq 0}\Hom(V_n,k).
\end{equation}
This terminology is slightly non-standard but will be convenient for our purposes.

\subsubsection{Lie algebras}

Let $A$ be a $\Q$-algebra. We will denote by 
\begin{equation}
\ff_2(A)=\Lie_A[\![x_1,y_1]\!]
\end{equation}
the \textit{completed} (with respect to the descending central series) free Lie algebra over $A$ on two generators $x_1$ and $y_1$ with Lie bracket $[\cdot,\cdot]$. Equivalently, $\ff_2$ is the degree completion of the same Lie algebra with respect to which $x_1$ and $y_1$ both have degree equal to one. We also denote by
$
\U(\ff_2)_A
$
its (topological) universal enveloping algebra which is canonically isomorphic to $A\langle\!\langle x_1,y_1\rangle\!\rangle$, the $A$-algebra of formal power series in non-commuting variables $x_1$, $y_1$. Moreover, $\U(\ff_2)_A$ is a complete Hopf $A$-algebra, whose (completed) coproduct $\Delta$ is uniquely determined by $\Delta(w)=w \otimes 1+1\otimes w$, for $w \in \{x_1,y_1\}$.

\subsubsection{Group-like and Lie-like elements}
Let
\begin{equation}
F_2(A):=\exp(\ff_2(A)) \subset \U(\ff_2)_A
\end{equation}
be the subset of exponentials of Lie series. This is a group that may also be characterized as the 
subset of group-like elements of $\U(\ff_2)_A$, that is, $f \in \U(\ff_2)_A$ such that $\Delta(f)=f\otimes f$ and whose constant term is equal to $1$.
Likewise, the Lie algebra $\ff_2(A) \subset \U(\ff_2)_A$ is precisely the subset of Lie-like (or primitive) elements, that is, $f\in \U(\ff_2)_A$ such that $\Delta(f)=f\otimes 1+1\otimes f$.

Finally, if $A=\Q$, we will write $\ff_2$ instead of $\ff_2(\Q)$ and likewise $\U(\ff_2)$ and $F_2$ instead of $\U(\ff_2)_\Q$ and $F_2(\Q)$. 
\subsubsection{Derivations}
We denote by 
\begin{equation}
\Der_0(\ff_2) \subset \Der(\ff_2)
\end{equation}
the Lie subalgebra of continuous derivations $D: \ff_2 \rightarrow \ff_2$ which 
(i) annihilate the bracket $[x_1,y_1]$, that is, $D([x_1,y_1])=0$,
and (ii) are such that $D(y_1)$ contains no linear term.
We say that a derivation $D:\ff_2\rightarrow \ff_2$ is of homogeneous degree $m\ge 0$ if for every element
$f\in\ff_2$ of homogeneous degree $n$, $D(f)$ is of homogeneous degree $n+m$. We also let
\begin{equation}
\Der'_0(\ff_2)
\end{equation}
be the subspace of $\Der_0(\ff_2)$ spanned by derivations
of homogeneous degree $\ge 1$.

\vspace{.2cm}
{\bf Acknowledgements.}  The authors are grateful to the referees for many useful
comments.  A particular debt is owed to Benjamin Enriquez for a multitude of comments
that greatly improved the accuracy of the text.  This paper was written while Nils 
Matthes was a PhD student at Universit\"at Hamburg under the supervision of Ulf K\"uhn.

\section{Geometric Elliptic Multizetas} 
\label{sec:egeom}

In the first two sections, we respectively recall the definition of a certain Lie algebra $\fu$ of derivations \cite{Pollack:Thesis,Tsunogai:Derivations} and of iterated integrals of Eisenstein series \cite{Brown:MMV,Manin:Iterated}.

In \S 2.3, we introduce the algebra of geometric elliptic multizetas, and prove that it is isomorphic to the graded dual of the universal enveloping algebra of $\fu$. The crucial step is a linear independence result for iterated integrals of Eisenstein series, which we prove (in slightly greater generality than needed) in \S 2.4.

\subsection{A family of special derivations}
\label{liealgebra}
We begin by recalling the definition of a family of derivations, which was first considered in \cite{Tsunogai:Derivations}, also played an important role in
\cite{CEE},\footnote{The derivation $\eps_{2k}$ is denoted $\delta_{2k-2}$ in \cite{CEE}.} and was studied in detail
in \cite{Pollack:Thesis}. We refer to Section 1.5.3 for our conventions regarding free Lie algebras and their derivations which we will use freely in what follows.

First of all, note that since the commutator of $y_1 \in \ff_2$ is $\Q \cdot y_1$ (as $\ff_2$ is the completion of a free Lie algebra), every derivation $D \in \Der_0(\ff_2)$ is uniquely determined by its value on $x_1$. Similarly, the only non-zero derivation $D \in \Der_0(\ff_2)$ that annihilates $y_1$ is the derivation $\eps_0$ defined by $x_1 \mapsto y_1$, $y_1 \mapsto 0$. 
\begin{dfn}\label{epses}
For $k \geq 0$, define a derivation $\eps_{2k} \in \Der_0(\ff_2)$ by
\[
\eps_{2k}(x_1)=\ad(x_1)^{2k} (y_1)
\]
which has degree $2k$. We denote by
\[
\fu=\widehat{\Lie}(\eps_{2k}; k\ge 0)
\]
the degree completion of the Lie subalgebra generated by the $\eps_{2k}$ and let $\fu':=\Der'_0(\ff_2) \cap \fu$ be the Lie subalgebra of $\fu$ of derivations of positive degree. 
\end{dfn}
Note that
$
\Der_0(\ff_2) \cong \Der'_0(\ff_2) \rtimes \Q\eps_0
$
which restricts to an isomorphism
\begin{equation}\label{fuprod}
\fu\cong \fu'\rtimes \Q\eps_0.
\end{equation}
Also, observe that $\eps_2=-\ad([x_1,y_1])$, in particular $\eps_2$ is central in $\fu$ and that $\ad^n(\eps_0)(\eps_{2k})=0$ whenever $n\geq 2k-1$. Thus
\begin{equation}\label{uprime}
\fu'=\widehat{\Lie}(\ad^{n}(\eps_0)(\eps_{2k}); 0\leq n\leq 2k-2, \, k \geq 1).
\end{equation}
As seen above, every $\eps_{2k}$ is uniquely determined by its value on $x_1$, while $\eps_0$ is the only non-zero derivation $D \in \fu$ that annihilates $y_1$. From this, we get
\begin{prop} \label{prop:evalinjective}
The $\Q$-linear evaluation maps 
\begin{alignat*}{4}
&v_{x_1}: \Der_0(\ff_2) &&\rightarrow \ff_2, \quad && D &&\mapsto D(x_1),\\
&v_{y_1}: \Der'_0(\ff_2)&&\rightarrow \ff_2, \quad && D &&\mapsto D(y_1),
\end{alignat*}
are injective.
\end{prop}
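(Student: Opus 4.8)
The plan is to prove each injectivity by showing that the corresponding kernel is trivial, reading both facts off from the two structural observations recorded just above: that a derivation $D \in \Der^0(\ff_2)$ is determined by $D(x_1)$, and that $\eps_0$ spans the space of derivations in $\fu$ annihilating $y_1$. Both maps are $\Q$-linear, so in each case it suffices to check that a derivation in the source killed by the map must vanish.

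For $v_{x_1}$, I would argue directly. Suppose $D \in \fu$ satisfies $D(x_1)=0$. Since $\fu \subset \Der^0(\ff_2)$, applying $D$ to $[x_1,y_1]$ and using $D([x_1,y_1])=0$ gives $[x_1,D(y_1)] = -[D(x_1),y_1] = 0$, so $D(y_1)$ lies in the centralizer of $x_1$. Because $\ff_2$ is free on two generators, this centralizer is the one-dimensional space $\Q\,x_1$ (the same fact, for $y_1$, is quoted above), whence $D(y_1)=c\,x_1$ for some $c \in \Q$. But condition (ii) in the definition of $\Der^0(\ff_2)$ forbids a linear term in $x_1$ in $D(y_1)$, forcing $c=0$ and hence $D(y_1)=0$. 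Thus $D$ kills both generators and $D=0$, so $\ker v_{x_1}=0$. (This is exactly the contrapositive of the assertion that $D \mapsto D(x_1)$ is injective on all of $\Der^0(\ff_2)$, restricted to the subalgebra $\fu$.)

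For $v_{y_1}$, the key input is the description of its kernel on $\fu$. Running the same computation for a $D \in \fu$ with $D(y_1)=0$ yields $[D(x_1),y_1]=0$, so $D(x_1)$ lies in the centralizer $\Q\,y_1$ of $y_1$; hence $D(x_1)=c\,y_1$, and since $D$ and $c\,\eps_0$ agree on both generators they are equal, giving $\ker(v_{y_1}|_\fu)=\Q\,\eps_0$. Now $\fu'$ is by definition the kernel of the projection $\fu \to \Q\,\eps_0$, so it meets $\Q\,\eps_0$ only in $0$; consequently $\ker(v_{y_1}|_{\fu'}) = \fu' \cap \Q\,\eps_0 = 0$, which is the claimed injectivity.

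I expect no serious obstacle at this stage: the proposition is essentially formal once the two centralizer facts are available. The only substantive ingredient is the classical statement that in a free Lie algebra the centralizer of a generator is the line spanned by that generator, which is already invoked in the text. If one wanted a fully self-contained treatment, the one genuinely nontrivial step would be establishing this centralizer property, for instance by passing to the free associative algebra $\U(\ff_2)$, noting that the associative centralizer of $x_1$ is $\Q[\![x_1]\!]$, and intersecting with the primitive (Lie) elements to land back in $\Q\,x_1$.
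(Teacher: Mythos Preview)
Your proof is correct and follows essentially the same approach as the paper. The paper does not give a separate formal proof of the proposition; it simply states the two structural facts (that $D\in\Der^0(\ff_2)$ is determined by $D(x_1)$, and that $\eps_0$ is the unique nonzero $D\in\Der^0(\ff_2)$ annihilating $y_1$) and then writes ``From this, we get'' the proposition. You have spelled out exactly those two deductions in full detail, using the centralizer argument in $\ff_2$ together with condition~(ii), which is precisely what the paper intends.
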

For the applications to elliptic multizetas, it will be more natural to scale the derivations $\eps_{2k}$ as follows:
\begin{equation*}
\weps_{2k}:=\begin{cases}\frac{2}{(2k-2)!}\eps_{2k} & k>0\\ -\eps_0 & k=0.\end{cases}
\end{equation*}
In this way, $\weps_{2k}$ is the image of the Eisenstein generator $\textbf{e}_{2k}$ under the monodromy representation $\fu^{\rm geom} \rightarrow \Der_0(\ff_2)$  (cf. \cite{HM}, Theorem 22.3). 
\subsection{Iterated Eisenstein Integrals}
\label{seciei}

In a sense to be made precise below, the derivation $\eps_{2k}$ naturally corresponds to integrals of Hecke-normalized Eisenstein series of weight $2k$ (for $\SL_2(\Z)$), whereas commutators of $\eps_{2k}$ correspond to \textit{iterated integrals of Eisenstein series}. These are special cases of \textit{iterated Shimura integrals} (or \textit{iterated Eichler integrals}) of modular forms introduced by Manin \cite{Manin:Iterated}, and later generalized by Brown \cite{Brown:MMV}.\footnote{To be precise, Manin defined iterated Shimura integrals of cusp forms between base points on the upper half-plane (possibly cusps), and the extension to Eisenstein series (which requires a regularization procedure) is due to Brown.}

For $k \geq 0$, let $G_{2k}(q)$ be the Hecke-normalized Eisenstein series, defined by $G_0(q):=-1$ and for $k \geq 1$
\begin{equation*} \label{eisensteinseries}
G_{2k}(q)=-\frac{B_{2k}}{4k}+\sum_{n\ge 1} \sigma_{2k-1}(n) q^n, \quad q=e^{2\pi i\tau}
\end{equation*}
Here, $\sigma_\ell(n)=\sum_{d\vert n} d^\ell$ denotes the $\ell$-th divisor function, and the $B_{2k}$ are the Bernoulli numbers defined by
\[\frac{z}{e^z-1}=1-\frac{z}{2} + \sum_{n \geq 1}B_{2n} \frac{z^{2n}}{(2n)!}.\]
Via the exponential map $\exp: \fH \rightarrow D^*$, 
$\tau\mapsto q=\exp(2\pi i\tau)$,
from the upper half-plane to the punctured unit disk
\[
D^*=\{q\in\C\,\bigl|\, 0<\vert q \vert <1\},
\]
we may consider $G_{2k}$ as a function of either variable $q$ or $\tau$, and we shall do so according to context.

Next, we define iterated integrals of Eisenstein series. More generally, if $f(q)=\sum_{n=0}^{\infty}a_nq^n$ is such that $a_0=0$, (e.g. if $f$ is a cusp form), then the definition of the indefinite integral $\int_{\tau}^{i\infty}f(\tau_1)\dd\tau_1$ poses no problem, as by definition $f$ vanishes at $i\infty$. This is not the case for the Eisenstein series $G_{2k}$, and consequently $\int_{\tau}^{i\infty}G_{2k}(\tau_1)\dd\tau_1$ diverges. It can be regularized by setting, for $k \geq 1$,
\begin{equation*}
\int_{\tau}^{i\infty}G_{2k}(\tau_1)\dd\tau_1:=\int_{\tau}^{i\infty}\Big[G_{2k}(\tau_1)-G^{\infty}_{2k}\Big]\dd\tau_1-\int_0^{\tau}G^{\infty}_{2k}\dd\tau_1,
\end{equation*}
where $G^{\infty}_{2k}=-\frac{B_{2k}}{4k}$ is the constant term in the Fourier expansion of $G_{2k}$ (if $k=0$, a similar method works).
Note that the integral of $G_{2k}$ so defined satisfies the differential equation $\dd f(\tau)=-G_{2k}(\tau)\dd\tau$.
The definition of regularized iterated integrals of Eisenstein series in \cite{Brown:MMV}, which is a special case of Deligne's tangential base point regularization (\cite{Deligne:P1}, \S 15) generalizes this construction, and runs as follows.

Let $W=\C[\![q]\!]^{<1}$ be the $\C$-algebra of formal power series, which converge on $D=\{q \in \C \, \vert \, |q| <1 \}$. We may decompose $W=W^0 \oplus W^{\infty}$ with $W^0=q\C[\![q]\!]$ and $W^{\infty}=\C$. For a power series $f \in W$, define $f^0$ to be its image in $W^0$ under the natural projection, and define $f^{\infty} \in W^{\infty}$ likewise. For example, in the case of the Eisenstein series $G_{2k}(q)$ with $k>0$, we have
\[
G_{2k}^{\infty}=-\frac{B_{2k}}{4k}, \quad G_{2k}^0(q)=\sum_{n\ge 1} \sigma_{2k-1}(n) q^n.
\]
We denote by $T^c(W)$ the \textit{shuffle algebra} on the $\C$-vector space $W$.\footnote{An alternative name is \textit{tensor coalgebra} which explains the notation $T^c(W)$.} As a $\C$-vector space, $T^c(W)$ is simply $\bigoplus_{n \geq 0}W^{\otimes n}$ but in order to avoid confusion with the tensor algebra it is customary to write down elements of $T^c(W)$ using bar notation $[f_1|,\ldots,|f_n]$. The product of $T^c(W)$ is the shuffle product $\shuffle$, defined by
\begin{equation*}
[f_1|\ldots|f_r] \shuffle [f_{r+1}|\ldots|f_{r+s}]=\sum_{\sigma \in \Sigma_{r,s}}f_{\sigma^{-1}(1)}\ldots f_{\sigma^{-1}(r+s)},
\end{equation*}
where $\Sigma_{r,s}$ denotes the set of permutations $\sigma$ on $\{1,\ldots,r+s\}$, such that $\sigma$ is strictly increasing on both $\{1,\ldots r\}$ and on $\{r+1,\ldots,r+s\}$.

Now define a map $R: T^c(W) \rightarrow T^c(W)$ by the formula
\begin{equation*}
R[f_1|\ldots |f_n]=\sum_{i=0}^n(-1)^{n-i}[f_1|\ldots |f_i] \shuffle [f^{\infty}_n|\ldots | f^{\infty}_{i+1}].
\end{equation*}
Following \cite{Brown:MMV}, eq. (4.11), we can now make the
\begin{dfn} \label{dfn:regiterint}
Given $f_1,\ldots,f_n \in W$ as above, their \textit{regularized iterated integral} is defined as
\begin{equation*}
I(f_1,\ldots,f_n;\tau):=(2\pi i)^n\sum_{i=0}^n\int_{\tau}^{i\infty}R[f_1|\ldots |f_i]_{\dd \tau} \int_{\tau}^0[f^{\infty}_{i+1}|\ldots |f^{\infty}_n]_{\dd \tau},
\end{equation*}
where
\begin{equation*}
\int_a^b [f_1 \vert \ldots \vert f_n]_{\dd \tau}:=\idotsint\limits_{a\leq \tau_1 \leq\ldots\leq \tau_n\leq b} f_1(\tau_1)\ldots f_n(\tau_n)\dd\tau_1 \ldots \dd \tau_n.
\end{equation*}
\end{dfn}
\begin{rmk}
The reason for the $(2\pi i)^n$-prefactor is to preserve the rationality of the Fourier coefficients. More precisely, if $f_1,\ldots,f_n$ have rational coefficients (that is, $f_i \in W_{\Q}:=\Q[\![q]\!]^{<1}$), then $I(f_1,\ldots,f_n;\tau) \in W_{\Q}[\log(q)]$, where $\log(q):=2\pi i\tau$.
\end{rmk}

As is the case for usual iterated integrals (\cite{Hain:Geometry}, Sect. 2), regularized iterated integrals satisfy the differential equation
\begin{equation} \label{eqn:diffeq}
\frac{\partial}{\partial \tau}\Bigr|_{\tau=\tau_0}I(f_1,\ldots,f_n;\tau)=-f_1(\tau_0)I(f_2,\ldots,f_n;\tau_0),
\end{equation}
as well as the shuffle product formula
\begin{equation} \label{eqn:shuffle}
I(f_1,\ldots,f_r;\tau)I(f_{r+1},\ldots,f_{r+s};\tau)=\sum_{\sigma \in \Sigma_{r,s}}
I(f_{\sigma^{-1}(1)},\ldots,f_{\sigma^{-1}(r+s)};\tau).
\end{equation}
The only case of interest for us will be when $f_1,\ldots,f_n$ are given by Eisenstein series $G_{2k_1},\ldots,G_{2k_n}$. In this case, we set
\begin{equation}\label{one}
\Gc_{\uk}(\tau):=I(G_{k_1},\ldots,G_{k_n};\tau),
\end{equation}
where $\uk=(k_1,\ldots,k_n) \in (2\Z_{\geq 0})^n$ and likewise denote by 
\begin{equation*}
\cI^{\rm Eis}:=\operatorname{Span}_{\Q}\{ \Gc_{\uk}(\tau) \} \subset \cO(\fH)
\end{equation*}
the $\Q$-span of all iterated Eisenstein integrals $\Gc_{\uk}(\tau)$ for all multi-indices 
$\uk$ (including $\Gc_{\emptyset}:=1$ for the empty index). Note that $\cI^{\rm Eis}$ is a $\Q$-subalgebra of $\cO(\fH)$ by \eqref{eqn:shuffle} and that it contains $\Q[2\pi i\tau]$ as a subalgebra, since
\begin{equation}\label{two}
\Gc_0(\tau)=2\pi i\tau.
\end{equation}

\subsection{The $\tau$-evolution equation and the algebra of geometric elliptic multizetas.}
\label{goftau}

We now put together the special derivations $\weps_{2k}$ and the iterated Eisenstein integrals into a single, formal series
\begin{equation} \label{eqn:gtau}
g(\tau):={\rm id}+\sum_{\uk}\Gc_{\uk}(\tau)\weps_{\uk},
\end{equation}
where the sum is over all even multi-indices $\uk \in 2\Z_{\geq 0}^n$ for $n>0$,
and for each tuple $\uk=(k_1,\ldots,k_n)$, we set 
$\weps_{\uk}:=\weps_{k_1}\circ\ldots\circ\weps_{k_n} \in \U(\fu)$, the (completed) universal enveloping algebra of $\fu$. From \eqref{eqn:diffeq}, it is clear that $g(\tau)$ satisfies the differential equation
\begin{equation*}
\frac{1}{2\pi i}\frac{\partial}{\partial \tau}g(\tau)=-\Big(\sum_{k \geq 0}G_{2k}(\tau)\weps_{2k}\Big)g(\tau),
\end{equation*} 
and it follows that $g(\tau)$ is group-like, that is, it is the exponential $g(\tau)=\exp(r(\tau))$ of a Lie series\footnote{Here and in the following, all tensor products involving complete Lie algebras such as $\fu$ will mean completed tensor products.}
\begin{equation}\label{rtau}
r(\tau) \in \fu \otimes_{\Q} \mathcal{I}^{\rm Eis}.
\end{equation}

\begin{dfn}
Define the $\Q$-algebra $\Ec^{\rm geom}$ of geometric elliptic multizetas to be the $\Q$-algebra generated by the coefficients of $r(\tau)\cdot x_1$.
\end{dfn}
Equivalently, $\Ec^{\rm geom}$ is equal to the $\Q$-vector space linearly spanned by the coefficients 
of the series $g(\tau)\cdot e^{x_1}$, because the coefficients of each of the power series $r(\tau)\cdot x_1$ 
and $g(\tau)\cdot e^{x_1}$ can be written as algebraic expressions in the
coefficients of the other. Also, note that since every derivation in $\fu$ is uniquely determined by its value on $x_1$, the $\Q$-algebra $\Ec^{\rm geom}$ is also the same as the $\Q$-algebra spanned by the coefficients of $g(\tau)$ when
written in a basis of the $\Q$-algebra generated by the $\eps_{2k}$.  Thus
in particular we have inclusions of commutative algebras
$$\Ec^{\rm geom}\subset \Q[\Gc_{\underline{k}}(\tau)\,\bigl|\,\underline{k}\in
{\N}^n, n\ge 0]\subset \mathcal{O}(\mathfrak{H}).$$

We can now state the main result of \S 2.
\begin{thm} \label{duality}
For every $\Q$-subalgebra $A \subset \C$, there is an isomorphism
\begin{equation*}
\U(\fu)^{\vee} \otimes_{\Q} A \cong \Ec^{\rm geom} \otimes_{\Q} A
\end{equation*}
of $A$-algebras. In particular, $\Ec^{\rm geom}$ is a commutative, graded Hopf $\Q$-algebra in a natural way.
\end{thm}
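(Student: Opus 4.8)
The plan is to realize $\Ec^{\rm geom}$ as the image of a duality pairing against the group-like series $g(\tau)$, and to read off both the isomorphism and the Hopf structure from that pairing. First I would fix a grading on $\fu$ for which every graded component of $\U(\fu)$ is finite-dimensional; note that the pure weight grading does not suffice because $\weps_0$ has weight $0$, but the bigrading by weight and depth (equivalently the single grading $\deg=\text{weight}+\text{depth}$) does, so the graded dual $\U(\fu)^\vee$ and the pairing below make sense. Viewing $g(\tau)=\sum_{\uk}\Gc_{\uk}(\tau)\weps_{\uk}$ as an element of $\widehat{\U(\fu)}\,\widehat{\otimes}\,\cI^{\rm Eis}$, I define
\[
\rho\colon \U(\fu)^\vee \longrightarrow \cO(\fH), \qquad \rho(\phi)=\langle \phi,g(\tau)\rangle=\sum_{\uk}\langle\phi,\weps_{\uk}\rangle\,\Gc_{\uk}(\tau).
\]
Since $\phi$ is supported in finitely many degrees and each degree of $\U(\fu)$ is finite-dimensional, the sum is finite, so $\rho$ is well-defined with image in $\cI^{\rm Eis}$. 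Choosing a homogeneous basis $\{u_i\}$ of $\U(\fu)$ with dual basis $\{u_i^\vee\}$, one has $\rho(u_i^\vee)=A_i(\tau)$, the coefficient of $u_i$ in $g(\tau)$; by the description of $\Ec^{\rm geom}$ as the span of these coefficients, $\mathrm{im}(\rho)=\Ec^{\rm geom}$, and this span is automatically a subalgebra (being the image of the algebra $\U(\fu)^\vee$), which reconciles the ``algebra'' and ``span'' descriptions of $\Ec^{\rm geom}$.

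Next I would verify that $\rho$ is an algebra homomorphism, which is where group-likeness enters. Because $g(\tau)=\exp(r(\tau))$ with $r(\tau)\in\widehat{\fu}\,\widehat{\otimes}\,\cI^{\rm Eis}$ primitive-valued, $g(\tau)$ is group-like, i.e.\ $\Delta g(\tau)=g(\tau)\otimes g(\tau)$ for the coproduct $\Delta$ of $\U(\fu)$. Since the product on $\U(\fu)^\vee$ is dual to $\Delta$,
\[
\rho(\phi\psi)=\langle\phi\otimes\psi,\Delta g\rangle=\langle\phi\otimes\psi,g\otimes g\rangle=\langle\phi,g\rangle\langle\psi,g\rangle=\rho(\phi)\rho(\psi),
\]
and $\rho$ sends the unit (the counit of $\U(\fu)$) to $\Gc_{\emptyset}=1$. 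Thus $\rho\colon\U(\fu)^\vee\to\Ec^{\rm geom}$ is a surjective homomorphism of $\Q$-algebras.

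The crux is injectivity, and this is the one step resting on genuinely analytic input. If $\rho(\phi)=0$, then $\sum_{\uk}\langle\phi,\weps_{\uk}\rangle\,\Gc_{\uk}(\tau)=0$ with rational coefficients, so the $\C$-linear independence of the iterated Eisenstein integrals (Theorem \ref{matthes}) forces $\langle\phi,\weps_{\uk}\rangle=0$ for every $\uk$; since the monomials $\weps_{\uk}$ span $\U(\fu)$, this gives $\phi=0$. Conceptually, the $\weps_{\uk}$ over-span $\U(\fu)$, so the coefficient functions $A_i(\tau)$ satisfy a priori the linear relations dual to those among the $\weps_{\uk}$; the content of Theorem \ref{matthes} is exactly that there are \emph{no further} relations, which is what upgrades the surjection $\rho$ to a bijection. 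I expect this to be the main obstacle, but it is isolated in the separately proved Theorem \ref{matthes}; the remainder is formal.

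Finally, for an arbitrary $\Q$-subalgebra $A\subset\C$ I would base-change to $\rho\otimes_\Q A\colon \U(\fu)^\vee\otimes_\Q A\to\Ec^{\rm geom}\otimes_\Q A$. Surjectivity is immediate, while injectivity amounts to the $A$-linear independence of the $\Gc_{\uk}(\tau)$ — precisely the reason one proves $\C$-linear (not merely $\Q$-linear) independence in Theorem \ref{matthes} — so $\rho\otimes_\Q A$ is an isomorphism of $A$-algebras. For the final assertion, $\U(\fu)$ is a graded cocommutative Hopf algebra with finite-dimensional graded pieces, hence its graded dual $\U(\fu)^\vee$ is a commutative graded Hopf algebra; transporting this structure along the degree-preserving algebra isomorphism $\rho$ makes $\Ec^{\rm geom}$ a commutative graded Hopf algebra, commutativity being in any case automatic since $\Ec^{\rm geom}\subset\cO(\fH)$.
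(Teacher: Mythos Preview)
Your proposal is correct and follows essentially the same approach as the paper: define the pairing map $\rho$ (the paper's $\widetilde\iota$) from $\U(\fu)^\vee$ into $\cI^{\rm Eis}$ via $g(\tau)$, identify its image with $\Ec^{\rm geom}$, and deduce injectivity from the $\C$-linear independence of the $\Gc_{\uk}(\tau)$. The paper packages the injectivity step slightly differently---factoring through the dual injection $\U(\fu)^\vee\hookrightarrow T^c(V_{\rm Eis})\cong \cI^{\rm Eis}$ induced by the surjection $T(V_{\rm Eis})\twoheadrightarrow\U(\fu)$---but this is exactly your ``the $\weps_{\uk}$ over-span $\U(\fu)$, and linear independence says there are no further relations'' in more diagrammatic language; your explicit remarks on the grading and on the algebra-homomorphism property via group-likeness are welcome details the paper leaves implicit.
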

\begin{proof}
The main ingredient in the proof is that the iterated Eisenstein integrals $\Gc_{\uk}(\tau)$ are linearly independent over $\C$, as functions in $\tau$. More precisely, by Corollary \ref{cor:Eis}, proved in the next section, there is a natural isomorphism
\begin{equation*}
\cI^{\rm Eis} \otimes_{\Q} A \cong T^c(V_{\mathrm{Eis}})\otimes_{\Q} A,
\end{equation*}
where $T^c(V_{\mathrm{Eis}})$ is the shuffle algebra on the $\Q$-vector space $V_{\mathrm{Eis}}$ spanned by all Eisenstein series $G_{2k}$, $k \geq 0$.

Assuming Corollary \ref{cor:Eis} for the moment, the proof of Theorem \ref{duality} proceeds as follows. Since the tensor algebra $T(V_{\mathrm{Eis}})$ is freely generated by one element in every even degree $2k \geq 0$, we get a canonical surjection
$
T(V_{\mathrm{Eis}})\rightarrow \U(\fu^{\rm gr})
$
of $\Q$-algebras, which induces by duality an injection 
\begin{equation*}
\iota: \U(\fu)^{\vee}\cong\U(\fu^{\rm gr})^{\vee}\hookrightarrow T^c(V_{\mathrm{Eis}}) \cong \cI^{\rm Eis}.
\end{equation*}
On the other hand, choosing a (homogeneous) linear basis $\cB$ of $\U(\fu^{\rm gr})$, the element $g(\tau)$ naturally defines a map
\begin{align*}
\widetilde{\iota}: \U(\fu)^{\vee} &\hookrightarrow \cI^{\rm Eis}\\
b^{\vee} &\mapsto b^{\vee}(g(\tau)),
\end{align*}
where $b^{\vee} \in \cB^{\vee}$ are the dual basis elements.
Clearly, the image of $\widetilde{\iota}$ does not depend on the choice of basis, and equals $\Ec^{\rm geom}$ by definition. On the other hand, it is easy to see that the maps $\iota,\widetilde{\iota}: \U(\fu)^{\vee} \rightarrow \cI^{\rm Eis}$ are equal, whence the result for $A=\Q$, and the general case follows simply by extension of scalars. Finally, it is well known that the universal enveloping algebra of any graded Lie algebra has a natural structure of a (cocommutative) graded Hopf algebra, thus $\U(\fu)^{\vee}$ is naturally a (commutative) graded Hopf algebra.
\end{proof}

\subsection{Linear independence}
\label{linearindependence}

In this subsection, we complete the proof of Theorem \ref{duality} by showing that the family of iterated Eisenstein integrals is linearly independent over $\C$, and that as a consequence $\cI^{\rm Eis} \otimes_{\Q} \C \cong T^c(V_{\mathrm{Eis}}) \otimes_{\Q}\C$ as $\C$-algebras. Although these results can meanwhile also be deduced from \cite{Matthes:IterIntQMod}, which proves linear independence of iterated integrals of quasimodular forms for $\SL_2(\Z)$ over the (fraction field of the) ring of quasimodular forms for $\SL_2(\Z)$ (see also \cite{Brown:II} for a similar result), we give a slightly different proof here in the special case of Eisenstein series which has the advantage that it works over a larger field of coefficients.

The main idea is to use the following general linear independence result.
\begin{thm}[{\cite[Theorem 2.1]{DDMS}}] \label{thm:auxthm}
Let $(\cA,\dd)$ be a differential algebra over a field $k$ of characteristic zero, whose ring of constants $\ker(\dd)$ is precisely equal to $k$. Let $\cC$ be a differential subfield of $\cA$ (that is, a subfield such that $\dd \cC \subset \cC$), $X$ any set with associated free monoid $X^*$. Suppose that $S \in \cA \langle \!\langle X \rangle \!\rangle$ is a solution to the differential equation
\begin{equation*}
\dd S=M \cdot S,
\end{equation*}
where $M=\sum_{x \in X}u_xx \in \cC \langle \!\langle X \rangle \!\rangle$ is a homogeneous series of degree $1$, with initial condition $S_1=1$, where $S_1$ denotes the coefficient of the empty word in the series $S$. The following are equivalent:
\begin{enumerate}
\item[(i)] The family of coefficients $(S_w)_{w \in X^*}$ of $S$ is linearly independent over $\cC$.
\item[(ii)] The family $\{u_x\}_{x \in X}$ is linearly independent over $k$, and we have
\begin{equation} \label{eqn:trivint}
\dd \cC \cap \operatorname{Span}_k(\{u_x\}_{x \in X})=\{0\}.
\end{equation}
\end{enumerate}
\end{thm}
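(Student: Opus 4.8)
The plan is to prove the two implications separately, treating (i) $\Rightarrow$ (ii) as the elementary direction and (ii) $\Rightarrow$ (i) as the substantial one. The starting point for both is the recursion obtained by extracting coefficients from $\dd S=M\cdot S$: writing $S=\sum_w S_w w$ and comparing the coefficient of a word $w$, one finds $\dd S_1=0$ (consistent with $S_1=1$) and, for a nonempty word $w=xw'$ with $x\in X$,
\[
\dd S_{xw'}=u_x\,S_{w'}.
\]
Thus $\dd$ acts on the coefficients by stripping the first letter and multiplying by the corresponding $u_x$; in particular $\dd S_x=u_x$. I would use this relation throughout, together with the hypothesis $\ker(\dd)=k$.

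For (i) $\Rightarrow$ (ii): if $\sum_x\lambda_x u_x=0$ were a nontrivial $k$-relation, then $\dd\big(\sum_x\lambda_x S_x\big)=0$, so $\sum_x\lambda_x S_x\in\ker(\dd)=k$, which rearranges into a nontrivial $\cC$-relation among $\{S_x\}\cup\{S_1\}$, contradicting (i); this yields $k$-independence of the $u_x$. Similarly, if some nonzero element of $\operatorname{Span}_k\{u_x\}$ lay in $\dd\cC$, say $\sum_x\lambda_x u_x=\dd c$ with $c\in\cC$ and the $\lambda_x$ not all zero, then $\dd\big(\sum_x\lambda_x S_x-c\big)=0$, so $\sum_x\lambda_x S_x-c\in k$ is again a nontrivial $\cC$-relation among the $S_w$. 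Hence (i) forces \eqref{eqn:trivint}.

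The heart of the matter is (ii) $\Rightarrow$ (i), and the hard part will be to control coefficients lying in $\cC$ rather than in the base field $k$, since hypothesis (ii) is phrased over $k$. Suppose, for contradiction, that a nontrivial $\cC$-relation $\sum_w c_w S_w=0$ exists; among all such, choose one for which the pair $(d,N)$ is lexicographically minimal, where $d$ is the maximal length of a word in the support and $N$ is the number of words of length exactly $d$ in the support. Differentiating a relation $\sum_w c_w S_w=0$ and using $\dd S_{xw'}=u_x S_{w'}$ produces a new relation $\sum_v\big(\dd c_v+\sum_x c_{xv}u_x\big)S_v=0$, whose length-$d$ coefficients are simply $\dd c_v$ for $|v|=d$. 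Now I distinguish two cases. If some top coefficient $c_{w_0}$ ($|w_0|=d$) is not in $k$, I divide the relation by $c_{w_0}$ (using that $\cC$ is a field) so that the coefficient of $S_{w_0}$ becomes $1$, and then differentiate: the $S_{w_0}$-coefficient becomes $\dd(1)=0$, so the derived relation has strictly fewer than $N$ words of length $d$ in its support. Either it is nontrivial of max length $d$, contradicting minimality of $N$; or it is nontrivial of max length $<d$, contradicting minimality of $d$; or it is identically zero, which forces every top coefficient of the normalized relation into $k$, returning us to the other case. If instead all top coefficients lie in $k$, then after differentiating the length-$d$ part vanishes, so the derived relation has max length $\le d-1$ and by minimality of $d$ must be trivial. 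Reading off its length-$(d-1)$ coefficients gives, for each word $v$ of length $d-1$, the identity $\sum_x c_{xv}u_x=-\dd c_v$, where the $c_{xv}$ are now top coefficients and hence lie in $k$. The left-hand side is thus a $k$-linear combination of the $u_x$ lying in $\dd\cC$, so by \eqref{eqn:trivint} it vanishes, and then $k$-independence of $\{u_x\}$ forces all $c_{xv}=0$. As $v$ and $x$ vary this kills every length-$d$ coefficient, contradicting the non-emptiness of the top support.

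The single genuinely delicate point, and the one I expect to require the most care, is the reduction step for non-constant leading coefficients: the \emph{divide-and-differentiate} manoeuvre is precisely what converts a relation with coefficients in $\cC$ into one whose top coefficients are forced into $k$, where hypothesis (ii) can finally be brought to bear. Everything else is bookkeeping with the length filtration and the recursion $\dd S_{xw'}=u_x S_{w'}$.
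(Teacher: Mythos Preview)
The paper does not actually prove this theorem: it is quoted from \cite{DDMS} and used as a black box in the proof of Theorem~\ref{matthes}. So there is no ``paper's own proof'' to compare against here.

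That said, your argument is correct and is essentially the standard one. The recursion $\dd S_{xw'}=u_xS_{w'}$ is right, the direction (i)~$\Rightarrow$~(ii) is handled cleanly, and the minimal-counterexample argument for (ii)~$\Rightarrow$~(i) works as written. One cosmetic remark: your two-case split is slightly redundant. You can always begin by normalising a minimal relation so that some top coefficient equals $1$ (this does not change $(d,N)$), then differentiate once. The derived relation is either nontrivial with strictly smaller $(d,N)$---contradiction---or identically zero; in the latter case the vanishing of the length-$d$ coefficients forces every top coefficient into $k$, and the vanishing of the length-$(d{-}1)$ coefficients gives $\sum_x c_{xv}u_x=-\dd c_v\in\dd\cC$ with the $c_{xv}\in k$, whence \eqref{eqn:trivint} and $k$-independence kill the top support. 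This merges your Case~1 and Case~2 into a single pass and avoids the slightly awkward ``returning us to the other case''. But this is purely a matter of presentation; the mathematics is sound.
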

Using this theorem, we can now prove linear independence of iterated Eisenstein integrals.
\begin{thm} \label{matthes}
The family $\{\Gc_{\uk}(\tau)\}$ is linearly independent over ${\rm Frac}(\Z[\![q]\!])$.
\end{thm}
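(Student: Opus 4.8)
The plan is to derive the statement directly from the general independence criterion of Theorem~\ref{thm:auxthm}, applied to the generating series of the $\Gc_{\uk}$ but with the derivations $\weps_{2k}$ replaced by the letters of a free alphabet, so that the only relations that can occur among the coefficients are those forced by the differential equation itself.

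First I would fix the differential framework. Let $T:=2\pi i\tau=\log q$ be a formal variable and set $\cA:=\Q(\!(q)\!)[T]$, equipped with the derivation $\dd:=q\tfrac{d}{dq}=\tfrac{1}{2\pi i}\tfrac{\partial}{\partial\tau}$ normalised by $\dd T=1$. Its ring of constants is exactly $\Q$: on $\Q(\!(q)\!)$ the operator $q\tfrac{d}{dq}$ annihilates only the scalars and never produces a nonzero $q^{0}$-term, so by induction on the $T$-degree no element of positive degree in $T$ can be a constant. Inside $\cA$ I take the differential subfield $\cC:=\Frac(\Z[\![q]\!])$, which contains $\Q$ and is visibly closed under $\dd$. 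By the Remark after Definition~\ref{dfn:regiterint} every $\Gc_{\uk}(\tau)$ lies in $\Q[\![q]\!][T]\subset\cA$, while each $G_{2k}\in\cC$ since its Fourier coefficients are integers apart from the rational constant term. Setting $X=\{x_k\mid k\ge0\}$ and $S:=\sum_{\uk}\Gc_{\uk}(\tau)\,x_{\uk}$ with $x_{\uk}=x_{k_1}\cdots x_{k_n}$, the same recursion for the $\Gc_{\uk}$ that underlies the $\tau$-evolution equation of $g(\tau)$ in \S\ref{goftau} now reads $\dd S=M\cdot S$ with $M=-\sum_{k\ge0}G_{2k}\,x_k\in\cC\langle\!\langle X\rangle\!\rangle$ homogeneous of degree one and $S_{1}=\Gc_{\emptyset}=1$. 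Theorem~\ref{thm:auxthm} then reduces the claim to condition~(ii): that $\{G_{2k}\}_{k\ge0}$ is linearly independent over $\Q$ and that $\dd\cC\cap\operatorname{Span}_{\Q}\{G_{2k}\}=\{0\}$. The first part is elementary: in a relation $\sum_k c_kG_{2k}=0$ the coefficient of $q^{p}$ for a prime $p$ gives $\sum_{k\ge1}c_k(1+p^{2k-1})=0$, and as this holds for infinitely many $p$ all the $c_k$ with $k\ge1$ vanish, whence also $c_0=0$ from the constant term.

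The hard part---and the reason $\Frac(\Z[\![q]\!])$ rather than $\C(\!(q)\!)$ is the natural coefficient field here---is the second part, $\dd\cC\cap\operatorname{Span}_{\Q}\{G_{2k}\}=\{0\}$. The key structural input I would use is that elements of $\cC=\Frac(\Z[\![q]\!])$ have \emph{bounded denominators}: writing $h=f/g$ with $g=b_0q^{m}(1+\cdots)$ and $b_0\in\Z\setminus\{0\}$ shows $h\in q^{-m}\Z[1/b_0][\![q]\!]$, so all Fourier coefficients of $h$ lie in a single ring $\Z[1/N]$. Suppose then $\sum_k c_kG_{2k}=\dd h$ with $c_k\in\Q$ not all zero; after clearing denominators I may take $c_k\in\Z$, and comparing $q^{n}$-coefficients for $n\ge1$ gives $n a_n=\sum_{k\ge1}c_k\sigma_{2k-1}(n)$ where $h=\sum_n a_nq^{n}$ (the constant-term equation simply eliminates $c_0$). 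With $k_0:=\min\{k\ge1\mid c_k\ne0\}$, the decisive estimate is $p$-adic: taking $n=p^{j}$ with $p$ coprime to $N$ and to $c_{k_0}$ and using $\sigma_{2k-1}(p^{j})=\sum_{i=0}^{j}p^{i(2k-1)}$, the part of $\sum_k c_k\sigma_{2k-1}(p^{j})$ with $i\ge1$ has $p$-adic valuation exactly $2k_0-1$ (its minimal term $c_{k_0}p^{2k_0-1}$ being unique), while the $i=0$ part is a fixed integer $C$. Hence $v_p(p^{j}a_{p^{j}})$ stays bounded as $j$ grows---equal to $0$ when $C\ne0$ (after excluding the finitely many $p\mid C$) and to $2k_0-1$ when $C=0$---so $v_p(a_{p^{j}})=v_p(p^{j}a_{p^{j}})-j\to-\infty$, contradicting $a_{p^{j}}\in\Z[1/N]$ for $p\nmid N$. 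This rules out any such relation, establishes~(ii), and thereby yields the linear independence of $\{\Gc_{\uk}(\tau)\}$ over $\Frac(\Z[\![q]\!])$ via Theorem~\ref{thm:auxthm}.
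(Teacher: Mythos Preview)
Your proof is correct and follows essentially the same approach as the paper: both apply Theorem~\ref{thm:auxthm} to the same differential setup and reduce to showing that no nontrivial $\Q$-linear combination of Eisenstein series lies in $\dd\,\Frac(\Z[\![q]\!])$, using the bounded-denominator property of that field together with a $p$-adic analysis of the coefficients at prime powers. The only cosmetic difference is in how the contradiction is extracted---the paper inverts $\dd$ and examines the coefficients of $q^{p}$ and $q^{p^{k_1}}$ modulo $\Z$, whereas you keep the equation $p^{j}a_{p^{j}}=\sum_k c_k\sigma_{2k-1}(p^{j})$ and let $j\to\infty$ to force $v_p(a_{p^{j}})\to -\infty$; both arguments are equivalent.
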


\begin{proof}
We will apply Theorem \ref{thm:auxthm} with the following parameters:
\begin{itemize}
\item $k=\Q$, $\cA=\Q[\log(q)](\!(q)\!)$ with differential $\dd=q\frac{\partial}{\partial q}$, and $\cC={\rm Frac}(\Z[\![q]\!])$ (the latter is a differential field by the quotient rule for derivatives)
\item $X=\{a_{2k}\}_{k \geq 0}$, $u_{a_{2k}}=-G_{2k}(q)$, hence 
\[M(q)=-\sum_{k \geq 0}G_{2k}(q)a_{2k}.\]
\end{itemize}
With these conventions, it follows from \eqref{eqn:diffeq} that the formal series
\begin{equation*}
1+\int_{q}^{0}[M]_{\dd\log q}+\int_{q}^{0}[M\vert M]_{\dd\log q}+\ldots \in \cO(\fH)\langle \!\langle X \rangle \!\rangle,
\end{equation*}
with the iterated integrals regularized as in Section \ref{seciei}, is a solution of the differential equation $\dd S=M \cdot S$, with $S_1=1$. Consequently, the coefficient of the word $w=a_{2k_1}\ldots a_{2k_n}$ in $S$ is equal to $\Gc(2k_1,\ldots,2k_n;\tau)$. Moreover, since the $\Q$-linear independence of the Eisenstein series is well known (cf. e.g. \cite{Serre:Cours}, VII.3.2), it remains to verify \eqref{eqn:trivint} in our situation.
	
To this end, assume that there exist $\alpha_{2k} \in \Q$, all but finitely many of which are equal to zero, such that
\begin{equation} \label{eqn:eq1}
\sum_{k \geq 0}\alpha_{2k}G_{2k}(q) \in \dd \cC.
\end{equation}
Clearing denominators, we may assume that $\alpha_{2k} \in \Z$. Furthermore, from the definition of $\dd=q\frac{\partial}{\partial q}$, one sees that the image $\dd \mathcal C$ of the differential operator $\dd$ does not contain any constant except for zero. Therefore, the coefficient of the trivial word $1$ in \eqref{eqn:eq1} vanishes; in other words
\begin{equation*}
\sum_{k \geq 0}\alpha_{2k}G_{2k}(q)=\sum_{k \geq 1}\alpha_{2k}G^0_{2k}(q) \in q\Q[\![q]\!].
\end{equation*}
Now the differential $\dd$ is invertible on $q\Q[\![q]\!]$, and inverting $\dd$ is the same as integrating. Hence \eqref{eqn:eq1} is equivalent to
\begin{equation} \label{eqn:eq2}
\sum_{k \geq 1}\alpha_{2k}\Gc^0_{2k}(\tau) \in \cC, \quad \Gc^0_{2k}(\tau):=\int_{q}^{0}G^0_{2k}(q_1)\frac{\dd q_1}{q_1}.
\end{equation}
However, this is absurd, unless all the $\alpha_{2k}$ vanish, as we shall see now. Indeed, if $f \in \cC={\rm Frac}(\Z[\![q]\!])$, then there exists $m \in \Z \setminus \{0\}$ such that $f \in \Z[m^{-1}](\!(q)\!)$. This follows from the well known inversion formula for power series. On the other hand, the coefficient of $q^p$ in $\Gc^0_{2k}(\tau)$, for $p$ a prime number, is given by
\begin{equation*}
\frac{\sigma_{2k-1}(p)}{p}=\frac{p^{2k-1}+1}{p} \equiv \frac{1}{p} \mod \Z.
\end{equation*}
Thus, we must have 
$
\frac 1p\sum_{k \geq 1}\alpha_{2k} \in \Z[m^{-1}],
$
for every prime number $p$, in particular $\sum_{k \geq 1}\alpha_{2k}$ is divisible by infinitely many primes (namely, at least all the primes which do not divide $m$), which implies $\sum_{k \geq 1}\alpha_{2k}=0$.
	
Now assume that $k_1$ is the smallest positive, even integer with the property that $\alpha_{k_1} \neq 0$. Consider the coefficient of $q^{p^{k_1}}$ in $\Gc^0_{2k}(\tau)$, which is equal to
\begin{equation*}
\frac{\sigma_{2k-1}(p^{k_1})}{p^{k_1}}=\frac{1}{p^{k_1}}\sum_{j=0}^{k_1}p^{j(2k-1)} \equiv \begin{cases}\frac{1}{p^{k_1}} \mod \Z& \mbox{if $2k>k_1$}\\ \frac{1}{p^{k_1}}+\frac{1}{p} \mod \Z&\mbox{if $2k=k_1$.}\end{cases}
\end{equation*}
By \eqref{eqn:eq2}, we have $\frac{\alpha_{k_1}}{p}+\frac{1}{p^{k_1}}\sum_{k \geq 1}\alpha_{2k} \in \Z[m^{-1}]$, and by what we have seen before, $\sum_{k \geq 1}\alpha_{2k}=0$. Hence
$
\frac {\alpha_{k_1}}{p} \in \Z[m^{-1}],
$
for every prime number $p$, which again implies $\alpha_{k_1}=0$, in contradiction to our assumption $\alpha_{k_1} \neq 0$. Therefore, in \eqref{eqn:eq2}, we must have $\alpha_{2k}=0$ for all $k \geq 1$ and \eqref{eqn:trivint} is verified.
\end{proof}
\begin{coro} \label{cor:Eis}
The iterated Eisenstein integrals $\Gc_\uk(\tau)$ are 
$\C$-linearly independent, and for every $\Q$-subalgebra $A \subset \C$, we have a natural isomorphism of $A$-algebras
\begin{align*}
\psi_A: T^c(V_{\mathrm{Eis}}) \otimes_{\Q} A &\rightarrow \cI^{\rm Eis} \otimes_{\Q} A\\
[G_{2k_1}|\ldots|G_{2k_n}] &\mapsto \Gc_{2k_1,\ldots,2k_n}(\tau),
\end{align*}
where $V_{\mathrm{Eis}}=\operatorname{Span}_{\Q}\{G_{2k}(\tau) \, \vert \, k \geq 0\} \subset \cO(\fH)$.
\end{coro}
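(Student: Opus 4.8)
The plan is to deduce the corollary from Theorem \ref{matthes} essentially formally: the substantive content (independence over the function field $\cC=\Frac(\Z[\![q]\!])$) is already in hand, and what remains is an extension-of-scalars argument passing from coefficients in $\cC$ to \emph{constant} coefficients in $\C$, together with the verification that $\psi_A$ is an algebra isomorphism. First I would record that, since $\Q\subset\cC$, Theorem \ref{matthes} gives in particular the $\Q$-linear independence of the family $\{\Gc_{\uk}(\tau)\}$ inside the $\Q$-vector space $\cA=\Q[\log q](\!(q)\!)$. The aim is then to upgrade this to $\C$-linear independence of the $\Gc_{\uk}$ viewed as functions on $\fH$, for which I would invoke the standard fact that a $\Q$-linearly independent family in a $\Q$-vector space $V$ stays $\C$-linearly independent in $V\otimes_\Q\C$ (complete it to a $\Q$-basis of $V$, whose image is a $\C$-basis of $V\otimes_\Q\C$).

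The one point requiring care is that a \emph{functional} $\C$-linear relation $\sum_i c_i\Gc_{\uk_i}=0$ must be shown to come from a relation in $\cA\otimes_\Q\C$. Here I would use two ingredients. The first is the functional independence of $q$ and $\log q$: if a finite sum $\sum_m (2\pi i\tau)^m g_m(q)$ vanishes identically on $\fH$ for convergent $q$-series $g_m$, then fixing a value $q_0=e^{2\pi i\tau_0}$ and letting $\tau$ run over the coset $\tau_0+\Z$ (on which $q$ is constant while $2\pi i\tau$ takes infinitely many values) forces the polynomial $\sum_m(2\pi i(\tau_0+n))^m g_m(q_0)$ in $n$ to vanish for all $n\in\Z$, whence each $g_m\equiv 0$. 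Thus the expansion map is injective. The second is that the Fourier coefficients of the $\Gc_{\uk}$ lie in $\Q$, so that reading off the coefficient of each monomial $(\log q)^m q^j$ in the relation and using the $\Q$-linear independence of the $c_i$ (which we may assume) shows the relation already holds in $\cA\otimes_\Q\C$. Combined with the previous paragraph this yields the desired $\C$-linear independence.

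Finally, for the isomorphism $\psi_A$: it is well-defined and surjective by the very definition of $\cI^{\rm Eis}$ as the $\Q$-span of the $\Gc_{\uk}$; it is a morphism of $A$-algebras because the shuffle product of bar-symbols is carried precisely to the product of iterated integrals by the shuffle relation \eqref{eqn:shuffle}; and it is injective because the monomials $[G_{2k_1}|\ldots|G_{2k_n}]$ form an $A$-basis of $T^c(V_{\mathrm{Eis}})\otimes_\Q A$ (the $G_{2k}$ being a $\Q$-basis of $V_{\mathrm{Eis}}$, so that $T^c(V_{\mathrm{Eis}})$ is the free shuffle algebra on them), and $\psi_A$ sends this basis to the family $\{\Gc_{\uk}(\tau)\}$, which is $\C$-linearly independent by the above and hence $A$-linearly independent for every $\Q$-subalgebra $A\subset\C$. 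The main, and essentially only, obstacle is the scalar-extension step of the second paragraph; the genuinely hard independence statement has already been carried out in Theorem \ref{matthes}.
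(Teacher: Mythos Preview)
Your proposal is correct and follows essentially the same route as the paper: deduce $\Q$-linear independence from Theorem~\ref{matthes} via the inclusion $\Q\subset\Frac(\Z[\![q]\!])$, use rationality of the Fourier coefficients of the $\Gc_{\uk}$ to pass from $\Q$- to $\C$-linear independence, and then observe that $\psi_A$ is a surjective shuffle-algebra map whose injectivity is exactly this independence. The paper compresses your second paragraph into the single remark that elements of $\Q(\!(q)\!)[\log q]$ are $\Q$-independent iff $\C$-independent; your explicit argument for the injectivity of the expansion map (via the coset $\tau_0+\Z$) is a welcome elaboration of a point the paper takes for granted, but it does not represent a different strategy.
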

\begin{proof}
Since $\Q \subset \Frac(\Z[\![q]\!])$, Theorem \ref{matthes} shows in particular that the $\Gc_{\uk}$ are linearly independent over $\Q$. Since the Eisenstein series $G_{2k}$ have coefficients in $\Q$, it follows from the definition that $\Gc_{\uk} \in \Q(\!(q)\!)[\log(q)]$, and elements of $W_{\Q}[\log(q)]=\Q(\!(q)\!)[\log(q)]$ are linearly independent over $\Q$, if and only they are so over $\C$.

For the second statement, it is clear that $\psi_A$ is a homomorphism of $\Q$-algebras (since both sides are endowed with the shuffle product) and that it is surjective. The injectivity of $\psi_A$ is just the $A$-linear independence of iterated Eisenstein integrals.
\end{proof}

\begin{coro}\label{indepcor} For any $\Q$-subalgebra $A\subset\C$
(viewed inside $\mathcal{O}(\mathfrak{H})$ as constant functions),
the $\Q$-subalgebra of $\cO(\fH)$ generated by $\cI^{\rm Eis}$ and $A$ is 
canonically isomorphic to $\mathcal{I}^{\rm Eis} \otimes_{\Q} A$, and the 
$\Q$-subalgebra generated by $\Ec^{\rm geom}$ and $A$ is canonically isomorphic
to $\Ec^{\rm geom}\otimes_\Q A$.
\end{coro}

\begin{proof} 
By the previous corollary, the elements of $\cI^{\rm Eis}$ are linearly
independent over $\C$, so over $A$.  Since both $A$ and $\cI^{\rm Eis}$
are $\Q$-algebras, any element of the algebra generated by $A$ and
$\cI^{\rm Eis}$ can be expressed as a linear combination of 
elements of $\cI^{\rm Eis}$ with coefficients in $A$ which is unique
up to scalar multiplication by rationals.  This implies
the isomorphism with the tensor product.  The result carries over to
$\Ec^{\rm geom}$ trivially since $\Ec^{\rm geom}$ lies inside $\cI^{\rm Eis}$.
\end{proof}

\section{The generating series of elliptic multizetas} 
\label{sec:eds}

In the first paragraph of this section, we recall an important fact about
the elliptic associator defined by Enriquez in \cite{Enriquez:EllAss},
or more precisely, the structure of the group-like power series 
$A(\tau)$, $B(\tau)$: namely, there exist power series $A$ and $B$
(whose definitions are recalled in \eqref{AandB} below) such that
$$A(\tau)=g(\tau)\cdot A,\ \ \ \ \ B(\tau)=g(\tau)\cdot B,$$ 
where $g(\tau)$ denotes the automorphism of \eqref{eqn:gtau}. In analogy with 
this structure, we will define a new power series $E(\tau)$, which will also 
take the form $g(\tau)\cdot E$ for a power series $E\in F_2(\Zc)$. 
We call $E(\tau)$ the \textit{elliptic generating series}, and its coefficients 
the $E$-\textit{elliptic multizetas} or $E$-EMZs; similarly we call the
coefficients of $A(\tau)$ the $A$-EMZs and those of $B(\tau)$ the
$B$-EMZs.  We define $\Ec$ (resp.~$\Ac$, resp.~$\Bc$) to be the $\Q$-algebra 
generated by the $E$-EMZs (resp.~the $A$-EMZs, resp.~the $B$-EMZs;
these coefficients are Enriquez's
``elliptic analogues of multiple zeta values''). The algebras $\Ec$, $\Ac$ 
and $\Bc$ are closely related but not equal. More importantly, each
family of EMZs satisfies different algebraic relations.

In the remainder of the section, for technical reasons related to the use 
of mould theory, we work modulo $2\pi i$ in the sense explained in \S 1.2.
In particular, we consider the power series $\overline\Phi_{\rm KZ}$ 
and $\overline{E}$, which are the power series obtained from 
$\Phi_{\rm KZ}$ and $E$ by reducing the coefficients from
$\Zc$ to $\overline\Zc=\Zc/\langle (2\pi i)^2\rangle$. 
In \S 3.2, we give an expression for $\overline E$ which relates it explicitly 
to the reduced Drinfel'd associator $\overline\Phi_{\rm KZ}$, and which will be
essential in determining the algebraic relations satisfied by the 
reduced $E$-EMZs (denoted $\overline{E}$-EMZs) in \S 4.

In \S 3.3, we define the reduced power series $\overline{A}(\tau)$
and $\overline{B}(\tau)$ and determine the structure of the $\Q$-algebras $\overline{\Ec}$, $\overline{\Ac}$ and $\overline{\Bc}$ generated by the coefficients of 
$\overline{E}(\tau)$, $\overline{A}(\tau)$ and $\overline{B}(\tau)$
respectively. In particular, we will see that the three algebras are closely related to each other.

\subsection{Definition of the elliptic generating series $E(\tau)$}

Throughout this section, we use the following change of variables: $a=y_1$
and $b=x_1$. This change of variables will be applied to all the expressions
in $x_1,y_1$ encountered in the previous section, such as $g(\tau)\cdot y_1$,
and we will also express other quantities studied by B.~Enriquez in terms
of $a$ and $b$, in particular the elliptic associator.  The purpose of
this change of variables is for the application of mould theory in
\S 4.

Let $Ass_\mu$ denote the set of genus zero associators $\Phi\in F_2(\C)$
such that the coefficient of $ab$ in $\Phi$ is equal to $\mu^2/24$\ \ \cite{Drinfeld}. We will use the 
same elements $t_{01},t_{02},t_{12}$ as in \cite{Enriquez:EllAss}, \S 5.3, but 
rewritten in the variables $a$ and $b$:
\begin{equation}\label{thets}
t_{01}=Ber_{b}(-a),\ \ t_{02}=Ber_{-b}(a),\ \ t_{12}=[a,b],
\end{equation}
where 
\begin{equation*}Ber_x(y)=\frac{\ad(x)}{e^{\ad(x)}-1}(y),
\end{equation*}
so that $t_{01}+t_{02}+t_{12}=0$. Recall that Enriquez showed that a section 
from $Ass_\mu$ to the set of elliptic associators is given by mapping $\Phi\in Ass_\mu$ 
to the elliptic associator $(\mu,\Phi,A,B)$ defined by 
\begin{align}\label{AandB}
A&=\Phi(t_{01},t_{12})e^{\mu\,t_{01}}\Phi(t_{01},t_{12})^{-1}\notag\\
B&=e^{\mu\,t_{12}/2}\Phi(t_{02},t_{12})e^{b}\Phi(t_{01},t_{12})^{-1}
\end{align}
($A$ and $B$ are denoted $A_+$ and $A_-$ in \cite{Enriquez:EllAss}, Proposition 4.8).  
In this article we fix the values $\mu=2\pi i$ and $\Phi=\Phi_{\rm KZ}$, the
Drinfel'd associator, whose coefficients generate $\Zc$ \cite{Furusho:Stablederivation}.  The elliptic associator $\bigl(A(\tau), B(\tau)\bigr)$ defined in 
\S 6.2 of \cite{Enriquez:EllAss} satisfies the relations
\begin{equation}\label{uffa}
A(\tau)=g(\tau)\cdot A, \quad B(\tau)=g(\tau)\cdot B
\end{equation}
(see \S 5.2 of \cite{Enriquez:Emzv}).

The complete Lie algebra $\ff_2=\Lie[\![a,b]\!]$ is topologically generated 
by $a$ and $b$, but since the operator $Ber_b$ is invertible, we have 
\begin{equation}\label{t01bgenerate}
a=-Ber_b^{-1}(t_{01})=\Bigl(\frac{e^{\ad(b)}-1}{\ad(b)}\Bigr)(-t_{01}),
\end{equation}
so that we can just as well take $t_{01}$ and $b$ as topological generators. Similarly, we can take 
$e^{t_{01}}$ and $e^b$ as topological generators of the prounipotent group  
$F_2=F_2(\Q)=\exp(\ff_2)$, which is a priori topologically
generated by $e^a$ and $e^b$.

Let us define an automorphism $\sigma$ of $F_2(\Zc[2\pi i])$ by
\begin{align*}
&\sigma(e^{t_{01}})=\Phi_{\rm KZ}(t_{01},t_{12})e^{t_{01}}\Phi_{\rm KZ}(t_{01},t_{12})^{-1}\\
&\sigma(e^b)\ =e^{\pi i t_{12}}\Phi_{\rm KZ}(t_{02},t_{12})e^{b}\Phi_{\rm KZ}(t_{01},t_{12})^{-1}.
\end{align*}
We set
\begin{equation}\label{defE}
E=\sigma(a),\ \ \ C=\exp(E)=\sigma(e^a).
\end{equation}
The automorphism $\sigma$ extends to an automorphism of the
completed enveloping algebra $\U(\ff_2)$, and restricts to an automorphism
of $\ff_2$.  Thus the power series $E=\sigma(a)$ is Lie-like.  All
Lie-like and group-like power series discussed in this section are
contained in the free non-commutative power series ring 
$R\langle\!\langle a,b\rangle\!\rangle$ where $R$ is either $\Ec^{\rm geom}\otimes
\Zc[2\pi i]$ or $\Ec^{\rm geom}\otimes \overline{\Zc}$. When
we speak of the ring generated by the coefficients of such a power series,
we mean that we take coefficients of the power series written in any
linear basis of $R\langle\!\langle a,b\rangle\!\rangle$ (for example the
basis of monomials in $a$ and $b$), all of which lie in $R$, and consider
the subring of $R$ generated by these coefficients. The ``degree'' is the
degree in the variables $a$ and $b$.

Up to degree 5, the explicit expansion of $E$ is given by
\begin{equation}\label{cof}
E=a-\frac{\pi i}{2}c_3+\frac{\pi i}{12}[c_1,c_3]
+\zeta(3)c_5+\frac{\pi^2}{36}[c_1,c_4]+\frac{\pi^2}{9}[c_3,c_2]+\cdots
\end{equation}
where $c_i=\ad(a)^{i-1}(b)$.

In analogy with \eqref{uffa}, we now set
\begin{equation}\label{taudefs}
E(\tau)=g(\tau)\cdot E\ \ {\rm and}\ \ C(\tau)=g(\tau)\cdot C=g(\tau)\cdot 
\sigma(e^a).
\end{equation}
These power series lie in $\bigl(\Ec^{\rm geom}\otimes \Zc[2\pi i]\bigr)\langle\!\langle a,b\rangle\!\rangle.$

\begin{dfn} 
The power series $E(\tau)$ is called the {\it elliptic generating 
series}, and its coefficients are called the {\it $E$-EMZs} or
{\it $E$-elliptic multizetas}.
For $\underline{k}=(k_1,\ldots,k_r)$ we write $E(\underline{k})$ for the 
coefficient in $E(\tau)$ of the monomial $c_{k_1}\cdots c_{k_r}$, which
lies in the tensor product $\Ec^{\rm geom}\otimes \Zc[2\pi i]$.
The $\Q$-algebra generated inside $\Ec^{\rm geom}\otimes \Zc[2\pi i]$ by 
the $E$-elliptic multizetas $E(\underline{k})$ is denoted by $\Ec$.
\end{dfn}

\begin{lem}\label{2piiEAB}
The power series $E(\tau)$ is Lie-like and $C(\tau)$ is group-like.  The element $2\pi i$ appears
as a coefficient in each of the three  power series $A(\tau)$, $B(\tau)$ and $E(\tau)$
expanded in the variables $c_i$.  The element $2\pi i$ does not lie in the $\Q$-algebra $\mathcal{A}'$ generated
by the coefficients of $A'(\tau)$, but $\pi^2$ does. \end{lem}

\begin{proof}Since $g(\tau)$ can be considered as an automorphism of the universal 
enveloping algebra of $\ff_2$, it preserves the Lie algebra
$\ff_2\otimes_\Q (\Ec^{\rm geom}\otimes_\Q\Zc[2\pi i])$; thus
$E(\tau)$ is Lie-like, and $C(\tau)$ is group-like.   To check that a rational multiple
of $2\pi i$ occurs as a coefficient in each of the three power series in the statement, it
suffices to give their expansions in the $c_i$ in low weights using the explicit formulas 
\eqref{AandB}, \eqref{uffa}, \eqref{cof} and \eqref{taudefs}, together with formula \eqref{eqn:gtau}
defining $g(\tau)$.  For the first three, we obtain
\begin{align*}
E(\tau)&=a+\bigl(G_2(\tau)-{\frac{\pi i}{2}}\bigr)c_3+{\frac{\pi i}{12}}G_0(\tau)c_4+{\frac{\pi i}{12}}[c_1,c_3]+\cdots\\
A(\tau)&=1-2\pi i c_1-2\pi^2c_1^2+\pi ic_2+\cdots\\
B(\tau)&=1+c_1+{\frac{1}{2}}c_1^2+\pi ic_2+{\frac{\pi^2}{3}}c_3+\cdots
\end{align*}
which shows that the coefficient $2\pi i$ appears as a coefficient in low weight.

For the final statement, it is easy to see that $2\pi i$ does not appear in the ring of coefficients of $A'(\tau)$
since this power series is given by applying $g(\tau)$ to the product of 
three terms
$$A'=\Phi_{\rm KZ}(t_{01},t_{12})\,e^{t_{01}}\,\Phi_{\rm KZ}(t_{01},t_{12})^{-1},$$
none of which have $2\pi i$ in their coefficient rings, since these lie in
${\R}$ and $2\pi i$ does not.  Since $\Phi_{\rm KZ}$ has $\zeta(2)$ as a coefficient, we may ask whether
$\pi^2$ lies in the coefficient ring of $A'(\tau)$.  The expansion of the power series $A'(\tau)$ is quite complicated and necessitates the help of a computer.  It is necessary to go up to weight 5 in order to find enough coefficients to
isolate $\pi^2$.  In weight 5, however, we find that the sum of the coefficient of $c_1^3c_2$ and
$c_2c_1^3$ in the expansion of $A'(\tau)$ is equal to ${\frac{4\pi^2-1}{24}}$, which shows that $\pi^2$ does
lie in the coefficient ring $\mathcal{A}'$.\end{proof}

\begin{lem}\label{sameoldthing}
The underlying vector space of $\Ec$ is spanned by the coefficients of  $C(\tau)$.  
\end{lem}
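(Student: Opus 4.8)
The plan is to run the standard argument relating a Lie-like series to its group-like exponential, as for the Drinfeld associator. The enabling observation is that $E(\tau)-1$ lies in the completed free Lie subalgebra $\widehat{\ff'}$, where $\ff':=\Lie(c_i : i\ge 1)\subset\ff_2$ is the kernel of the projection $\ff_2\to\ff_2/\ff'\cong\Q a$. Indeed, $E(\tau)-1$ is Lie-like and has no linear term: $E-1=\sigma(a)-a$ has none because $\sigma$ is tangent to the identity, and $g(\tau)$ does not lower degrees (the operator $\weps_0$ is of degree $0$ and the $\weps_{2k}$, $k\ge 1$, raise degree), so no degree-one term can be created. Having no $a$-term, the Lie element $E(\tau)-1$ lies in $\widehat{\ff'}$. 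Consequently $C(\tau)=\exp(E(\tau)-1)$ is a group-like element of the free complete enveloping algebra $\U(\ff')=\Q\langle\!\langle c_1,c_2,\ldots\rangle\!\rangle$, in which the monomials $c_{k_1}\cdots c_{k_r}$ form a topological basis and every $c_i$ is primitive. This places us exactly in the classical setting, which is why the result is "the same old thing."

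Let $V$ be the $\Q$-span of the coefficients $C_{\uk}(\tau)$ of $C(\tau)$. The crucial point is that $V$ is already a subalgebra. Since the $c_i$ are primitive, the graded dual of $\U(\ff')$ in the monomial basis is the shuffle algebra on words in the $c_i$; dualizing the group-like identity $\Delta C(\tau)=C(\tau)\otimes C(\tau)$ then yields the shuffle relations
\[
C_{\underline u}(\tau)\,C_{\underline v}(\tau)=\sum_{\underline w\in\underline u\shuffle\underline v}C_{\underline w}(\tau)
\]
among the coefficients. In particular any product of two coefficients of $C(\tau)$ is again a $\Q$-linear combination of coefficients, and since $C_\emptyset(\tau)=1$, the span $V$ is a unital $\Q$-subalgebra.

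It remains to identify $V$ with $\Ec$, and both inclusions are now formal. Expanding $C(\tau)=\exp(E(\tau)-1)=\sum_{n\ge 0}(E(\tau)-1)^n/n!$ shows that each coefficient of $C(\tau)$ is a $\Q$-polynomial in the $E(\uk)$, so $V\subseteq\Ec$. Conversely, inverting the exponential via $E(\tau)-1=\log C(\tau)=\sum_{n\ge 1}(-1)^{n-1}(C(\tau)-1)^n/n$ expresses each generator $E(\uk)$ as a $\Q$-polynomial in the coefficients of $C(\tau)$, i.e.\ in elements of $V$; since $V$ is a subalgebra this gives $E(\uk)\in V$, hence $\Ec\subseteq V$. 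Combining the two inclusions yields $\Ec=V$, which is the assertion. I expect the only step needing genuine care to be the reduction of the first paragraph together with the shuffle relations: one must make sure that working in the generators $c_i$ — which, unlike $a,b$, are not algebraically independent in $\ff_2$ — is legitimate, and this is exactly what the containment $E(\tau)-1\in\widehat{\ff'}$ secures, since it confines the whole computation to the \emph{free} enveloping algebra $\U(\ff')$ where group-likeness and the shuffle product are dual in the usual way.
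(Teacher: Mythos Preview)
Your argument is correct and follows essentially the same route as the paper: show that the $\Q$-span of the coefficients of $C(\tau)$ is closed under products via the shuffle relations (using that $C(\tau)$ is group-like), and then obtain both inclusions from the mutually inverse exp/log expansions. Your first paragraph carefully re-derives the fact that $E(\tau)-1\in\widehat{\ff'}$, which the paper has already set up just before the definition of $\Ec$ (via Lazard elimination and the convention that coefficients are taken in the monomials $c_{k_1}\cdots c_{k_r}$); note also that the $c_i$ \emph{are} free generators of $\ff'$ by Lazard, so your closing worry is unfounded once you are inside $\U(\ff')$.
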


\begin{proof}
Let $\Ec'$ denote the $\Q$-vector space spanned by the coefficients 
of $C(\tau)$.  We first note that $\Ec'$ is in fact a $\Q$-algebra, because 
$C(\tau)$ is a group-like power series, which means that the product of two of 
its coefficients can be written as a linear combination of such by using the 
(multiplicative) shuffle relations.  Next, we note that since 
$C(\tau)=\exp\,E(\tau)$, the coefficients of $C(\tau)$ can be expressed as 
algebraic combinations of the coefficients of $E(\tau)$, 
so they lie inside the subring $\Ec\subset\Ec^{\rm geom}\otimes\Zc[2\pi i]$.
Thus $\Ec'\subset \Ec$. Conversely, since $E(\tau)=\log\,C(\tau)$, 
the coefficients of $E(\tau)$ are all algebraic and thus linear 
combinations of those of 
$C(\tau)$, so they lie in $\Ec'$, so $\Ec\subset\Ec'$. This completes the proof.
\end{proof}

\subsection{An expression for $E$ modulo $2\pi i$} \label{ssec:3.2}
From now until the end of this article, we work modulo $2\pi i$, in the
sense that if a series has coefficients in $\Zc[2\pi i]$, then we reduce these
modulo the ideal generated by $2\pi i$. The quotient ring $\overline\Zc$ 
is equal to the quotient of $\Zc$ by $(2\pi i)^2$, or equivalently, by 
$\zeta(2)$.  We use overlining to denote the reduced objects. 
The goal of the section is to obtain an expression for $\overline E$ that 
relates it directly to the reduced Drinfeld associator $\overline\Phi_{\rm KZ}$. 

In order to approach this result, we will move from the Lie algebra
of derivations over to power series in $a$ and $b$ by using the map
given by evaluation at $a$.  This is important because it allows us to 
compare derivations with power series in $a$ and $b$ such as 
$\overline\Phi_{\rm KZ}$. 

Let $v_a$ denote the linear map given by evaluation at $a$.  In 
Proposition~\ref{prop:evalinjective} we considered this map restricted to
$D\in \Der'_0(\ff_2)$; we have $v_a(D)=D(a)$ for 
$D\in \Der'_0(\ff_2)$.  
Let the push-operator be the linear automorphism of $\Q\langle\!\langle a,b
\rangle\!\rangle$ defined by cyclically permuting the powers 
of $a$ between the letters $b$ in a 
monomial: 
\begin{equation}\label{pushop}
push(a^{k_0}b\cdots ba^{k_r})=a^{k_r}ba^{k_0}b\cdots a^{k_{r-1}},
\end{equation}
extended to polynomials and power series by linearity.  A power series is 
said to be {\it push-invariant} if $push(p)=p$.  
It is shown in \cite{Schneps:Emzv}, Lemma 2.1.1, that the restriction of $v_a$ to the Lie 
subalgebra $\Der'_0(\ff_2)$, which is injective by Proposition 
\ref{prop:evalinjective}, has 
image equal to the space of push-invariant Lie series
$\ff_2^{\rm push}\subset \ff_2$.
The map $v_a$ transports the Lie bracket on $\Der'_0(\ff_2)$ to a Lie
bracket $\langle\cdot,\cdot\rangle$ on $\ff_2^{\rm push}$ as follows:
\begin{equation}\label{expa}
\langle D(a),D'(a)\rangle=[D,D'](a). 
\end{equation}
We also use $v_a$ to transport the exponential map 
$\exp:\Der'_0(\ff_2)\rightarrow {\rm Aut}(\ff_2)$ to an exponential map
$\exp_a$ which makes the following diagram commute:
\begin{equation}\label{diag0}
\xymatrix{\Der'_0(\ff_2)\ar[r]^{\exp}\ar[d]^{v_a}&{\rm Aut}'_0(\ff_2)\ar[d]_{v_a}\\
\ff_2^{\rm push}\ar[r]^{\exp_a}&\ff_2,}
\end{equation}
where ${\rm Aut}'_0(\ff_2)={\rm exp}\bigl({\rm Der}'_0(\ff_2)\bigr)$.  We
observe that the right-hand vertical map $v_a$ is injective
on ${\rm Aut}'_0(\ff_2)$. Indeed, if $\exp(D)\cdot a=\exp(D')\cdot a$ for
two derivations $D\ne D'\in \Der'_0(\ff_2)$, then by comparing terms of lowest degree on both sides, we see that $D(a)=D'(a)$, hence $D=D'$ by Proposition \ref{prop:evalinjective}.

This shows that $v_a$ is injective on ${\rm Aut}'_0(\ff_2)$, which by the
diagram \eqref{diag0} then shows that $\exp_a$ is also injective.  
Let $\mathcal{G}_a$ denote the
image of $\ff_2^{\rm push}$ under $\exp_a$, or equivalently, the
image of ${\rm Aut}'_0(\ff_2)$ under $v_a$. Then $\mathcal{G}_a$ is
a set of elements in $\ff_2$, which forms a group when equipped with the 
group law transported from ${\rm Aut}'_0(\ff_2)$ by $v_a$.  This group law, 
which we denote by $\star_a$, is compatible with the Campbell-Hausdorff law on 
the Lie algebra $\ff_2^{\rm push}$, since for two derivations 
$D,D'\in \Der'_0(\ff_2)$, we have
\begin{align}\label{stara}\exp_a\bigl(D(a)\bigr)\star_a\exp_a\bigl(D'(a)\bigr)
&=\bigl(\exp(D)\cdot a\bigr)\star_a\bigl(\exp(D')\cdot a\bigr)\notag\\
&=\bigl(\exp(D)\circ\exp(D')\bigr)\cdot a\notag\\
&=\exp\bigl(ch_{\Der'_0(\ff_2)}(D,D')\bigr)\cdot a\notag\\
&=\exp_a\bigl(ch_{\ff_2^{\rm push}}\bigl(D(a),D'(a)\bigr)\bigr).
\end{align}
We also have $\exp_a(0)=v_a\bigl(\exp(0)\bigr)=v_a({\rm id})=a$, so in fact the
element $a$ is the unit element of the group 
$\mathcal{G}_a$ equipped with the multiplication $\star_a$.
Explicitly, for $D\in \Der'_0(\ff_2)$, we have
\begin{equation}\label{aminus1}
\exp_a\bigl(D(a)\bigr)=v_a\bigl(\exp(D)\bigr)=\exp(D)\cdot a=a+D(a)+\frac{1}{2}D^2(a)+\cdots
\end{equation}

\vspace{.2cm}
Let $\grt_{ell}$ be the elliptic Grothendieck-Teichm\"uller Lie algebra
defined by Enriquez in \S 5.6 of \cite{Enriquez:EllAss}.\footnote{We work with the completed version which is denoted $\widehat{\grt}_{ell}$ in \cite{Enriquez:EllAss}.}  Not surprisingly, this 
Lie algebra
will be an essential tool in proving our results.  Let us recall some of
the basic facts concerning it.  Firstly, Enriquez showed that there is a 
natural Lie morphism $\grt_{ell}\rightarrow \Der_0(\ff_2)$. It was 
further shown in \cite{Schneps:Emzv}, Equation (1.2.4), that this map is injective.\footnote{Note that what is denoted $\Der_0(\ff_2)$ in \cite{Schneps:Emzv} is denoted here by $\Der'_0(\ff_2)$.}  We
will identify $\grt_{ell}$ with its image in $\Der_0(\ff_2)$.

Enriquez also proved the following results. There is a canonical surjection 
$\grt_{ell}\rightarrow \grt$.  Let $\mathfrak{r}_{ell}$ denote the kernel; 
then it is easy to see that $\fu\subset \mathfrak{r}_{ell}$.  Finally, 
Enriquez gave a section $\gamma:\grt\rightarrow\grt_{ell}$ 
of the canonical surjection, and showed that $\grt_{ell}$ has the form
of a semi-direct product
$$\grt_{ell}\cong \mathfrak{r}_{ell}\rtimes\gamma(\grt).$$

We write $\gamma_a$ for the composition map $v_a\circ\gamma$, so that
\begin{equation}\label{gammaplus}
\gamma_a:\grt\rightarrow \ff_2^{\rm push}.
\end{equation}

Let $\exp^\odot$ denote the (``twisted Magnus'') exponential map 
$\exp^\odot:\grt\rightarrow GRT$ (\cite{Racinet:Thesis}, (2.14), where it is
denoted $\circledast$).  Recall that $\Der^*\bigl({\rm Lie}[[x,y]]\bigr)$ is the space
of derivations that annihilate $x$ and take $y$ to a bracket $[y,f]$ and
$z=-x-y$ to a bracket $[z,g]$ for some $f,g\in {\rm Lie}[x,y]$.  Writing
${\rm Aut}^*\bigl({\rm Lie}[[x,y]]\bigr)$ for the group of automorphisms
$\exp(D)$ with $D\in \Der^*\bigl({\rm Lie}[[x,y]]\bigr)$,
we have the commutative diagram
\begin{equation}\label{diag}
\xymatrix{
{\rm Der}^*({\rm Lie}[[x,y]])\ar[d]_{\exp}&\grt\ar[l]\ar[d]_{\exp^\odot}\ar[r]^\gamma&\grt_{ell}\ar[d]^{\exp}\ar[r]^{v_a}&\ff_2^{\rm push}\ar[d]^{\exp_a}\\
{\rm Aut}^*({\rm Lie}[[x,y]])&GRT\ar[l]\ar[r]^\Gamma&GRT_{ell}\ar[r]^{v_a}&\mathcal{G}_a.}
\end{equation}
where $\Gamma$ is the group homomorphism that makes the middle square commute,
and the upper map $\grt\rightarrow \Der^*({\rm Lie}[[x,y]])$ in the left-hand 
square is the map that takes a Lie element $\psi\in\ff_2$ to the associated 
{\it Ihara derivation} $D_\psi$ defined by 
\begin{equation}\label{iharader}
D_\psi(x)=0,\ \ \ \  D_\psi(y)=[\psi(x,y),y].
\end{equation}
Ihara \cite{Ihara:Tatetwist,Ihara:Stable} studied these derivations
in detail, and in particular, he showed that if 
$\Psi=\exp^\odot(\psi)$ and
$A_\Psi$ denotes the automorphism $\exp(D_\psi)$ of $\U({\rm Lie}[[x,y]])$, then 
\begin{equation}\label{iharaaut}
A_\Psi(x)=x,\ \ \ \ A_\Psi(y)=\Psi\ y\ \Psi^{-1}.
\end{equation}

\vspace{.2cm}
We can now state the main result of this subsection.

\begin{thm} \label{mainthm}
Let $\overline{E}$ be obtained from $E$ by reducing the coefficients from
$\overline{\Zc}$ to $\Zc/\langle (2\pi i)^2\rangle$. Then
\begin{equation*}
\overline{E}=\Gamma(\overline\Phi_{\rm KZ})\cdot a.
\end{equation*}
\end{thm}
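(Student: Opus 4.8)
The plan is to turn the claimed equality of group-like series into an equality of the automorphisms producing them, and then to identify the concretely defined $\sigma$ with Enriquez's section $\Gamma$ evaluated at $\overline\Phi_{KZ}$, working modulo $(2\pi i)^2$ throughout. First I would unwind both sides. By definition $E=1-a+\sigma(a)$, so $\overline E=1-a+\overline{\sigma(a)}$. For the right-hand side, $\Gamma_a=(1-a+v_a)\circ\Gamma$ is the composite along the bottom row of the diagram of \S 3.2, so for the automorphism $\Gamma(\overline\Phi_{KZ})\in GRT_{\rm ell}$ one has, directly from the meaning of the map $1-a+v_a$ together with the group-likeness identity \eqref{aminus1},
\begin{equation*}
\Gamma_a(\overline\Phi_{KZ})=1-a+\Gamma(\overline\Phi_{KZ})(a).
\end{equation*}
Hence the theorem reduces to the single identity $\overline{\sigma(a)}=\overline{\Gamma(\Phi_{KZ})(a)}$; in fact I would aim to prove the stronger statement that $\overline\sigma$ and $\Gamma(\overline\Phi_{KZ})$ coincide, modulo $(2\pi i)^2$, as automorphisms of $\ff_2$, and then read off their common value at $a$.

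Next I would make $\Gamma(\overline\Phi_{KZ})$ explicit. By commutativity of the middle square of the diagram, $\Gamma(\exp^{\odot}(\psi))=\exp(\gamma(\psi))$, so writing $\overline\phi_{KZ}=\log^{\odot}\overline\Phi_{KZ}$ gives $\Gamma(\overline\Phi_{KZ})=\exp(\gamma(\overline\phi_{KZ}))$ with $\gamma(\overline\phi_{KZ})\in\grt_{\rm ell}\subset\Der^0(\ff_2)$. As $\Gamma$ is Enriquez's section, $\Gamma(\Phi)$ is the automorphism of $\ff_2$ attached to the elliptic associator $(\mu,\Phi,A,B)$; since $t_{01}$ and $b$ generate $\ff_2$ by \eqref{t01bgenerate}, it is determined by its images on $e^{t_{01}}$ and $e^b$, which are governed by the conjugation formulas defining $A$ and $B$. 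At this point I would invoke \cite{Schneps:Emzv}, which realizes $\grt_{\rm ell}\hookrightarrow\Der^0(\ff_2)$, identifies the image of $v_a$ with the push-invariant Lie series, and fixes the action of $\Gamma(\Phi)$ on these generators; note in particular that every $D\in\Der^0(\ff_2)$ annihilates $t_{12}=[a,b]$, so $\Gamma(\Phi)$ fixes $t_{12}$.

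I would then compare with $\sigma$. By construction $\sigma(e^b)=B$ agrees with Enriquez's $B$ for $\mu=2\pi i$, while $\sigma(e^{t_{01}})=\Phi_{KZ}(t_{01},t_{12})e^{t_{01}}\Phi_{KZ}(t_{01},t_{12})^{-1}$ is the normalization of $A$ in which $e^{\mu t_{01}}$ is replaced by $e^{t_{01}}$. Matching these against the explicit description of $\Gamma(\overline\Phi_{KZ})$ from the previous step, I would verify that the two automorphisms agree on the generators $e^{t_{01}}$ and $e^b$ after reducing coefficients from $\Zc$ to $\overline\Zc$. Because $a=\bigl(\tfrac{e^{\ad(b)}-1}{\ad(b)}\bigr)(-t_{01})$ is an explicit series in $t_{01}$ and $b$, agreement on the generators propagates to agreement on $a$, giving the required identity $\overline{\sigma(a)}=\overline{\Gamma(\Phi_{KZ})(a)}$.

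The main obstacle is exactly this identification $\overline\sigma=\Gamma(\overline\Phi_{KZ})$: reconciling the concrete automorphism $\sigma$, built from $\Phi_{KZ}\in Ass_{2\pi i}$ with its mixture of Betti and de Rham normalizations, with the graded section $\Gamma$, whose source $GRT$ requires the coefficient of $ab$ to vanish. This is precisely what forces the passage to $\overline\Zc=\Zc/\langle(2\pi i)^2\rangle$: modulo $(2\pi i)^2$ the coefficient $\mu^2/24=-\zeta(2)$ of $ab$ in $\overline\Phi_{KZ}$ vanishes, so $\overline\Phi_{KZ}$ becomes admissible for $\Gamma$, and the quadratic-in-$2\pi i$ discrepancies between the two normalizations---such as the term $\tfrac{\pi^2}{6}c_4$ occurring in the expansion of $E$---are killed. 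Carrying out the bookkeeping of the powers of $2\pi i$ in the conjugation formulas, while controlling the action of $\Gamma$ through the explicit section of \cite{Schneps:Emzv}, is the technical heart of the proof.
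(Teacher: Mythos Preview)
Your proposal is correct and follows essentially the same route as the paper: reduce to showing $\overline\sigma=\Gamma(\overline\Phi_{KZ})$ as automorphisms of $\ff_2$ by comparing their values on the generators $t_{01}$ and $b$, then evaluate at $a$ and use \eqref{aminus1}. The one point to sharpen is the source of the explicit formulas for $\Gamma(\Psi)$ on $t_{01}$ and $b$: the paper invokes \cite{Enriquez:EllAss}, Lemma--Definition 4.6 (eq.~\eqref{GammaPsi}) rather than \cite{Schneps:Emzv}, and those formulas match the defining formulas for $\overline\sigma$ on the nose once one passes to $\overline\Zc$, so no further ``bookkeeping of powers of $2\pi i$'' is actually needed beyond observing that the factor $e^{\pi i t_{12}}$ in $\sigma(e^b)$ disappears modulo $(2\pi i)^2$ since $\Gamma(\Psi)$ fixes $t_{12}$.
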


\begin{proof} Let $\psi\in\grt$, and let $\Psi=\exp^\odot(\psi)\in GRT$.
Then $\gamma(\psi)\in \grt_{ell}\subset \Der_0(\ff_2)$ and 
$\Gamma(\Psi)=\exp\bigl(\gamma(\psi)\bigr)\in GRT_{ell}\subset
{\rm Aut}_0(\ff_2)$, the group of automorphisms preserving $[a,b]$.
The proof is based on a result from \cite{Enriquez:EllAss}, Lemma-Definition
4.6, which states that the automorphism $\Gamma(\Psi)$ acts as 
follows: 
\begin{align} 
\Gamma(\Psi)(t_{01})&=\Psi(t_{01},t_{12})t_{01}\Psi(t_{01},t_{12})^{-1}
\label{GammaPsi}\\
\Gamma(\Psi)(b)\,\ \ &=\log\bigl(\Psi(t_{02},t_{12})e^b\Psi(t_{01},t_{12})^{-1}\bigr),\notag
\end{align}
where $t_{01}$ is as in \eqref{thets}.  Recall from \eqref{t01bgenerate}
that we can take $t_{01}$ and $b$ as generators of $\ff_2$. 

\vspace{.2cm}
Recall that $\overline\Phi_{\rm KZ}\in GRT(\overline\Zc)$.
(This is the reason for which we work mod $2\pi i$, since the
term $-\zeta(2)[x,y]$ in $\Phi_{\rm KZ}$ means that it does not lie in $GRT$,
preventing us from taking advantage of the results on $\grt_{ell}$.)
Let $\overline\sigma$ be 
the automorphism of $F_2(\overline\Zc)$ obtained from 
$\sigma$ by reducing modulo $2\pi i$, that is,
\begin{align}\label{defAprime}
&\overline\sigma(e^{t_{01}})=\overline\Phi_{\rm KZ}(t_{01},t_{12})e^{t_{01}}\overline\Phi_{\rm KZ}(t_{01},t_{12})^{-1}=\overline{A'}\\
&\overline\sigma(e^b)\ =\ \overline\Phi_{\rm KZ}(t_{02},t_{12})e^{b}\overline\Phi_{\rm KZ}(t_{01},t_{12})^{-1}=\overline{B},\notag
\end{align}
where we set $A'=A^{1/2\pi i}$ and $\overline{A'}$ and 
$\overline{B}$ denote the reductions of $A'$ and $B$ mod $2\pi i$.

Comparing with the values of $\Gamma(\overline\Phi_{\rm KZ})$ from
\eqref{GammaPsi} on the generators $t_{01}$,
$b$ of $\ff_2$, we find that $\overline\sigma=\Gamma(\overline\Phi_{\rm KZ})$.
Since $E=\sigma(a)$ by \eqref{defE}, we find that
$$\overline{E}=\overline\sigma(a)=\Gamma(\overline{\Phi}_{\rm KZ})\cdot a,$$
which concludes the proof.\end{proof}

\begin{coro}\label{coro35} The $\Q$-algebra generated by the coefficients of 
$\overline E$ is all of $\overline\Zc$.
\end{coro}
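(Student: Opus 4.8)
Write $\cA$ for the $\Q$-algebra generated by the coefficients of $\overline E$; the goal is to show $\cA = \overline\Zc$. The inclusion $\cA \subseteq \overline\Zc$ is immediate from Theorem \ref{mainthm}: since $\overline E = \Gamma_a(\overline\Phi_{KZ})$ and the map $\Gamma_a = v_a\circ\Gamma$ is assembled from the rational elements $t_{01}, t_{02}, t_{12}, b$ via the $\Q$-defined formulas \eqref{GammaPsi} and evaluation at $a$, each coefficient of $\overline E$ is a $\Q$-polynomial in the coefficients of $\overline\Phi_{KZ}$, and these lie in $\overline\Zc$.

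The reverse inclusion is the substance of the corollary, and the plan is to recover the associator data from $\overline E$. First I would set $\phi_{KZ} = \log^\odot(\overline\Phi_{KZ}) \in \grt\otimes_\Q\overline\Zc$. Because $\log^\odot$ and $\exp^\odot$ are mutually inverse maps given by $\Q$-polynomial formulas, the coefficients of $\phi_{KZ}$ generate the same algebra as those of $\overline\Phi_{KZ}$, namely all of $\overline\Zc$ (the coefficients of $\overline\Phi_{KZ}$ being, by definition, the reductions of the multiple zeta values). The proof of Theorem \ref{mainthm} records the identity $\overline E = \exp_a(\gamma_a(\phi_{KZ}))$. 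Since $\exp_a$ is invertible with a $\Q$-polynomial inverse $\log_a$---this being the usual exponential/logarithm correspondence between $\ff_2^{\rm push}$ and its group, transported through the bijection $v_a$---the coefficients of $\gamma_a(\phi_{KZ}) = \log_a(\overline E)$ are $\Q$-polynomials in the coefficients of $\overline E$, hence lie in $\cA$. Finally, $\gamma_a = v_a\circ\gamma$ is $\Q$-linear and injective by the results of \cite{Schneps:Emzv}; expressing $\phi_{KZ}$ in a $\Q$-basis of $\grt$ and applying a $\Q$-rational left inverse of $\gamma_a$ then exhibits each coefficient of $\phi_{KZ}$ as a $\Q$-linear combination of the coefficients of $\gamma_a(\phi_{KZ})$. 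Consequently $\overline\Zc = \langle\,\text{coefficients of }\phi_{KZ}\,\rangle \subseteq \cA$, and combined with the first inclusion this gives $\cA = \overline\Zc$.

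The only real obstacle is this reverse inclusion, whose content is that forming $\overline E = \Gamma_a(\overline\Phi_{KZ})$ does not collapse the algebra generated by the coefficients. It reduces to two injectivity statements: that $\exp_a$ loses no information (so the passage from the derivation $\gamma(\phi_{KZ})$ to its exponential is reversible), and, more essentially, that $\gamma_a$ is injective, so that evaluating Enriquez' section at $a$ retains the full associator. Both are already available in the cited work, after which the argument is simply a matter of tracking which coefficients are $\Q$-polynomial expressions in which.
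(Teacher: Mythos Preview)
Your proof is correct and follows essentially the same route as the paper's: both arguments pass through the chain $\overline E \leftrightarrow \gamma_a(\phi_{KZ}) \leftrightarrow \phi_{KZ} \leftrightarrow \overline\Phi_{KZ}$, using the invertibility of $\exp_a/\log_a$ and $\exp^\odot/\log^\odot$ together with the $\Q$-linear injectivity of $\gamma_a$ to conclude that the algebras generated by coefficients coincide at each stage. The only difference is presentational---you separate the two inclusions and spell out the left-inverse argument for $\gamma_a$, whereas the paper invokes the general principle (as in Lemma~\ref{sameoldthing}) that the algebra spanned by the coefficients of a group-like series agrees with the one generated by the coefficients of its logarithm.
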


\begin{proof} Set $\phi_{\rm KZ}=\log^\odot(\overline\Phi_{\rm KZ})$, so that 
$\phi_{\rm KZ}\in \grt\otimes_\Q\overline\Zc$. 
We first show that the coefficients of $\phi_{\rm KZ}$ (written in a basis of
$\grt$) multiplicatively generate the same ring as the coefficients of
$\overline{\Phi}_{\rm KZ}$, namely all of $\overline{\Zc}$. To do this, we
use an argument analogous to the one in the proof of Lemma \ref{sameoldthing}.
Let $\overline{\Zc}'$ denote the $\Q$-algebra generated multiplicatively by the 
coefficients of $\phi_{\rm KZ}$.  We of course know that the 
$\Q$-vector space $\overline{\Zc}$ spanned by the (reduced) multizeta values 
which are the coefficients of $\overline{\Phi}_{\rm KZ}$ is actually a 
$\Q$-algebra, since $\overline{\Phi}_{\rm KZ}$ is group-like.  
The definition of the twisted Magnus exponential 
(cf.~\cite{Racinet:Thesis}, (2.14), or \cite{Schneps:ARI}, (3.5.2)) shows that
the coefficients of $\overline{\Phi}_{\rm KZ}$ are all algebraic expressions in
the coefficients of $\phi_{\rm KZ}$; thus $\overline{\Zc}\subset \overline{\Zc}'$.
Similarly, since $\phi_{\rm KZ}=\log^\odot\overline{\Phi}_{\rm KZ}$, the
coefficients of $\phi_{\rm KZ}$ are also all algebraic expressions in elements of 
$\overline{\Zc}$; thus $\overline{\Zc}'=\overline{\Zc}$; in other words, the
coefficients of $\phi_{\rm KZ}$ multiplicatively generate $\overline{\Zc}$.

Since $\gamma$ is an injective map from $\grt$ to $\grt_{ell}$, the coefficients
of $\gamma(\phi_{\rm KZ})$ in a basis of $\grt_{ell}$ also generate all of
$\overline{\Zc}$.  Recall that Enriquez showed that $\grt_{ell}$ is isomorphic
to a semi-direct product of two of its subspaces, $\gamma(\grt)\rtimes\mathfrak{r}_{ell}$, and that $\eps_{2k}\in \mathfrak{r}_{ell}$ for $k\ge 0$.  Therefore
we see that $\eps_0\notin \gamma(\grt)\subset \grt_{ell}\subset \Der_0(\ff_2)$.
Since the natural bigrading on $\Der_0(\ff_2)$ restricts to a bigrading
on $\gamma(\grt)$ (cf.~\cite{Enriquez:EllAss}), we find that in fact
$\gamma(\grt)\subset \Der'_0(\ff_2)$.  Therefore, by Proposition~\ref{prop:evalinjective}, the evaluation map $v_a$ is injective on 
$\gamma(\grt)$, so the coefficients of $\gamma(\phi_{\rm KZ})\cdot a$ in
a basis of $v_a(\grt_{ell})$ also generate $\overline{\Zc}$.  By the
same argument as above, thanks to the definition of $\exp_a$ in
\eqref{aminus1},
the coefficients of $\exp_a\bigl(\gamma(\phi_{\rm KZ})\cdot a\bigr)$ 
then span $\overline{\Zc}$.
However, by \eqref{aminus1} and the diagram \eqref{diag}, we have
\begin{equation}\label{gammaphi}
\exp_a\bigl(\gamma(\phi_{\rm KZ}\cdot a\bigr)=\exp\bigl(\gamma(\phi_{\rm KZ})
\bigr)\cdot a=\Gamma(\overline{\Phi}_{\rm KZ})\cdot a=\overline{E},
\end{equation}
which completes the proof.
\end{proof}

\vspace{.2cm}
\subsection{Structure of the $\Q$-algebras $\overline\Ec$,
$\overline{\Ac}$ and $\overline{\Bc}$.}

Since the rings generated by the coefficients of $E$, $A$ and $B$ all
lie inside $\Zc[2\pi i]$ and the ring generated by the coefficients of
$g(\tau)$ is $\Ec^{\rm geom}$, and since $E(\tau)=g(\tau)\cdot E$,
$A(\tau)=g(\tau)\cdot A$ and $B(\tau)=g(\tau)\cdot B$, the rings
$\Ec$, $\Ac$ and $\Bc$ generated by the coefficients of $E(\tau)$,
$A(\tau)$ and $B(\tau)$ respectively are all contained inside the
ring generated inside $\mathcal{O}(\mathfrak{H})$ by the
subrings $\Ec^{\rm geom}$ and $\Zc[2\pi i]$, which as we saw is isomorphic
to the tensor product of these two rings. We therefore may and will view $\Ec,\Ac$ and $\Bc$ as subrings of $\Ec^{\rm geom}\otimes_\Q \Zc[2\pi i]$:
\begin{equation}
\Ec,\Ac,\Bc\subset\Ec^{\rm geom}\otimes_\Q \Zc[2\pi i].
\end{equation} 
We have
\begin{equation}
\Ec^{\rm geom}\otimes_\Q \Zc[2\pi i]\rightarrow
\bigl(\Ec^{\rm geom}\otimes_\Q \Zc[2\pi i]\bigr)/\langle 1\otimes 2\pi i\rangle
\simeq \Ec^{\rm geom}\otimes_\Q \overline{\Zc}.
\end{equation}
We saw in Lemma \ref{sameoldthing} that $2\pi i\in \Ec$. The
$\Q$-algebra $\overline{\Ec}$ generated by the coefficients of 
$\overline{E}(\tau)$ is equal to the quotient of $\Ec\subset
\Ec^{\rm geom}\otimes \Zc[2\pi i]$ by the 
intersection of $\Ec$ with the ideal $\langle 1\otimes 2\pi i\rangle$, 
so we have an inclusion
$$\overline{\Ec}\subset \Ec^{\rm geom}\otimes_\Q \overline{\Zc}.$$

Recall from Definitions 1.1 and 1.2 that we set
$\overline{A'}(\tau)=g(\tau)\cdot \overline{A'}$ and
$\overline{B}(\tau)=g(\tau)\cdot \overline{B}$, where $\overline{A'}$ and
$\overline{B}$ are as in (\ref{defAprime}) above. 
We write $\overline{\Ac}$ for
the $\Q$-algebra generated by the coefficients of $\overline{A'}(\tau)$
and $\overline{\Bc}$ for that generated by the coefficients of 
$\overline{B}(\tau)$.  Then like for $\overline{\Ec}$, we have inclusions
$$\overline{\Ac},\overline{\Bc}\subset \Ec^{\rm geom}\otimes_\Q \overline{\Zc}$$
(see Definition 1.2).
The goal of this paragraph is to compare the 
subrings $\overline{\Ec}$, $\overline{\Ac}$ and $\overline{\Bc}$
of $\Ec^{\rm geom}\otimes_\Q \overline{\Zc}$.

\vspace{.2cm}
\begin{thm}\label{tensprod}
We have the following equalities:
$$\overline{\Ec}[2\pi i\tau]=\overline{\Ac}[2\pi i\tau]=\overline{\Bc}
=\Ec^{\rm geom}\otimes_\Q \overline{\Zc}.$$
\end{thm}

\begin{proof}
Let $r(\tau)=\log\,g(\tau)\in \Der_0(\ff_2)$.  Recall from \eqref{defAprime}
that $\overline{A'}=\overline{\sigma}(e^{t_{01}})$,
$\overline{B}=\overline{\sigma}(e^b)$, and
$\overline{E}=\overline{\sigma}(a)$, where 
$\overline{\sigma}=\Gamma(\overline\Phi_{\rm KZ})=\exp\bigl(\gamma(\phi_{\rm KZ})\bigr).$
Let us write $ch_{[,]}=ch_{\Der_0(\ff_2)}$ for the Campbell-Hausdorff law in 
the derivation algebra $\Der_0(\ff_2)$. We set 
$$\delta(\tau)=ch_{[\,,\,]}\bigl(r(\tau),\gamma(\phi_{\rm KZ})\bigr)\in \Der_0(\ff_2).$$
Then we have
\begin{align}\label{threeseries}
\begin{cases}
\log\,\overline{A'}(\tau)=g(\tau)\cdot \log\,\overline{A'}=\exp\bigl(r(\tau))\circ
\exp\bigl(\gamma(\phi_{\rm KZ})\bigr)\cdot t_{01}=\exp\bigl(\delta(\tau)\bigr)\cdot t_{01}\\
\log\,\overline{B}(\tau)=g(\tau)\cdot \log\,\overline{B}=\exp\bigl(r(\tau))\circ
\exp\bigl(\gamma(\phi_{\rm KZ})\bigr)\cdot b=\exp\bigl(\delta(\tau)\bigr)\cdot b\\
\overline{E}(\tau)=g(\tau)\cdot \overline{E}=\exp\bigl(r(\tau))\circ
\exp\bigl(\gamma(\phi_{\rm KZ})\bigr)\cdot a=\exp\bigl(\delta(\tau)\bigr)\cdot a.
\end{cases}
\end{align}
We also set
$$\mathfrak{a}(\tau)=\delta(\tau)\cdot t_{01},\ \ \mathfrak{b}(\tau)=\delta(\tau)\cdot b,\ \ \mathfrak{e}(\tau)=\delta(\tau)\cdot a.$$

\vspace{.3cm}
\noindent {\bf Step 1: The case of $\overline{\Bc}$.}  This case turns out
to be the easiest one of the three, for the reason that by Proposition \ref{prop:evalinjective}, the map $v_b$ evaluating derivations on $b\in \ff_2$ is injective 
on all of $\fu$, which is not the case for $v_a$ or $v_{t_{01}}$.  
Let $V_b\subset \ff_2$ denote the vector space $v_b\bigl(\Der_0(\ff_2)\bigr)$.
Recall the whole situation with the exponential map $\exp_a$ 
and the group $\mathcal{G}_a$ that we introduced in
\eqref{expa}, \eqref{diag0}, \eqref{stara}, \eqref{aminus1}. 
Thanks to the injectivity of $v_b$, we can set up the analogous situation
for $b$ instead of $a$, but now using all of $\Der_0(\ff_2)$.

We first transport the Lie bracket from $\Der_0(\ff_2)$ onto $V_b$ via $v_b$,
setting 
$$\langle D(b),D'(b)\rangle_b=[D,D'](b),$$
which makes $V_b$ into a Lie algebra.
We then define $\exp_b$ to be the map that makes the diagram
\begin{equation}\label{diagb}
\xymatrix{\Der_0(\ff_2)\ar[r]^{\exp}\ar[d]^{v_b}&{\rm Aut}_0(\ff_2)
\ar[d]_{v_b}\\
V_b\ar[r]^{\exp_b}&\ff_2}
\end{equation}
commute.

Exactly as in the case of $a$, we can show that the right-hand vertical map 
$v_b$ induced on ${\rm Aut}_0(\ff_2)$ is still injective.  
Thus $\exp_b$ is also injective on $V_b$.  We write 
$$\mathcal{G}_b=\exp_b(V_b)=v_b\bigl({\rm Aut}_0(\ff_2)\bigr)\subset \ff_2,$$
and equip this set with a group law $\star_b$ in analogy with 
\eqref{stara}, transported from ${\rm Aut}_0(\ff_2)$ by $v_b$:
\begin{align}\label{starb}
\exp_b\bigl(D(f)\bigr)\star_b\exp_b\bigl(D'(b)\bigr)&=v_b\bigl(\exp(D)\bigr)
\star_b v_b\bigl(\exp(D')\bigr)\notag\\
&=v_b\bigl(\exp(D)\circ\exp(D')\bigr)\notag\\
&=\bigl(\exp(D)\circ\exp(D')\bigr)\cdot b.
\end{align}
We write $\log_b:\mathcal{G}_b\rightarrow V_b$ for the inverse of $\exp_b$, so
that
\begin{equation}\label{logb}
\log_b\bigl(\exp(D)\cdot b\bigr)=D(b).
\end{equation}

We will use all this information in the following calculation.  
By \eqref{threeseries} and the diagram \eqref{diagb}, we have
\begin{equation}\label{B1}
\log\,\overline{B}(\tau)=\exp\bigl(\delta(\tau)\bigr)\cdot b=\exp_b\bigl(\delta(\tau)\cdot b\bigr) \in \mathcal{G}_b,
\end{equation}
so
\begin{equation}\label{B2}
\mathfrak{b}(\tau):=\delta(\tau)\cdot b=\log_b\,\log\,\overline{B}(\tau)\in V_b.
\end{equation}

\vspace{.1cm}
\noindent Thus by \eqref{logb} with $D=\delta(\tau)=ch_{[\,,\,]}\bigl(r(\tau),\gamma(\phi_{KZ})\bigr)$, we have
\begin{align*}
\mathfrak{b}(\tau)&=\delta(\tau)\cdot b\notag\\
&=ch_{[\,,\,]}\bigl(r(\tau),\gamma(\phi_{\rm KZ})\bigr)\cdot b\notag\\
&=r(\tau)\cdot b+\gamma(\phi_{\rm KZ})\cdot b+\frac{1}{2}[r(\tau),\gamma(\phi_{\rm KZ})]\cdot b+\cdots\notag\\
&=r(\tau)\cdot b+\gamma(\phi_{\rm KZ})\cdot b+\frac{1}{2}\langle r(\tau)\cdot b,
\gamma(\phi_{\rm KZ})\cdot b\rangle_b+\cdots,
\end{align*}
which we rewrite as
\begin{equation}\label{frakbtau}
\mathfrak{b}(\tau)=r(\tau)\cdot b+\gamma(\phi_{\rm KZ})\cdot b+s(\tau)\cdot b,
\end{equation}
where $s(\tau)$ is the sum of all the bracketed terms in 
$ch_{[\,,\,]}\bigl(r(\tau),\gamma(\phi_{\rm KZ})\bigr)$.

By the argument of Lemma \ref{sameoldthing}, the $\Q$-algebra generated by the
coefficients of $\mathfrak{b}(\tau)=\log_b\,\log\,\overline{B}(\tau)$ is
equal to the one generated by the coefficients of $\log\,\overline{B}(\tau)$,
which in turn is equal to the one generated by the coefficients of 
$\overline{B}(\tau)$, namely $\overline{\Bc}\subset
\Ec^{\rm geom}\otimes \overline{\Zc}$.  In order to show that these two
algebras are equal, we will consider $\overline{\Bc}$ as the $\Q$-algebra
generated by the coefficients of $\mathfrak{b}(\tau)$, and use properties 
of the right-hand side of \eqref{frakbtau} to show separately 
that it contains $\overline{\Zc}$ and $\Ec^{\rm geom}$.

\vspace{.3cm}
Let us write ${\Zc}_{>0}$ for the vector subspace of $\Zc$ spanned by 
all the multizeta values $\zeta(k_1,\ldots,k_r)$ with $r\ge 1$, and let
$\overline{\Zc}_{>0}$ denote the image of this space under the quotient
map $\Zc\rightarrow\!\!\!\!\!\rightarrow \overline{\Zc}$.
Then $\overline{\Zc}$ is spanned by $\Q$ and $\overline{\Zc}_{>0}$.
Let us write $\nz$ for the vector space quotient 
$\overline{\Zc}_{>0}/\overline{\Zc}_{>0}^2$, where $\overline{\Zc}_{>0}^2$ denotes
the vector subspace of $\overline{\Zc}$ generated by products of elements of
$\overline{\Zc}_{>0}$, which can be viewed as linear combinations thanks to the
shuffle multiplication of multizetas.  The vector space $\nz$ is called the
space of {\it new multiple zeta values} \cite{Furusho:Stablederivation}, \cite{Schneps:Emzv}.

Let $\cMZ$ denote the $\Q$-algebra of {\it motivic multiple zeta 
values} defined by Goncharov (see \cite{Goncharov:Galois}); let $\cMZ_{>0}$ denote the $\Q$-vector subspace generated by the motivic multizeta values,
and let $\nmz=\cMZ_{>0}/\cMZ_{>0}^2$ be the space of {\it
new motivic multizeta values}.  Goncharov showed that $\cMZ$ is a
Hopf algebra, so that $\cNMZ$ is a Lie co-algebra.
He further showed that the motivic $\zeta^{\mathfrak{m}}(2)=0$ in 
$\cMZ$, that there is a surjection 
$\cMZ\rightarrow\!\!\!\!\!\rightarrow\overline{\Zc}$, and that the
motivic multizeta values satisfy the associator relations of
$\overline{\Phi}_{\rm KZ}$. Letting $\nmz^{\vee}$ be the graded dual of $\nmz$, it follows that we have an inclusion of Lie algebras
\begin{equation}\label{inclusions}
\nmz^{\vee}\subset \grt
\end{equation}
and that the Lie series $\phi_{\rm KZ}$ lies in the Lie algebra
$\nmz^{\vee}\otimes_\Q \overline\Zc$.
Thus $\gamma(\phi_{\rm KZ})\in \gamma(\nmz^{\vee})\otimes_\Q\overline{\Zc}$ and
$$\gamma(\phi_{\rm KZ})\cdot b\in v_b\bigl(\gamma(\nmz^{\vee})\bigr)\otimes_\Q
\overline{\Zc}.$$ 
An important theorem by Brown (\cite{Brown:MTM}) identifies the Lie algebra $\nmz^{\vee}$ with the fundamental Lie algebra of the category of mixed Tate motives over $\Z$, which is
free on one generator in each odd weight $\geq 3$ (the weight used here corresponds to
the degree of the Lie polynomials, and is the negative of the usual motivic weight).

Let us introduce the derivations $\delta_{2k}$, defined by 
\begin{equation}\label{deltas}
\delta_{2k}([a,b])=0,\ \ \ \delta_{2k}(a) =\ad(a)^{2k}(b)\ \ {\rm for\ } k\ge 0.
\end{equation}
Similarly to $\fu$ and $\fu'$, let $\fv$ denote the completed Lie algebra
generated by the $\delta_{2k}$, $k\ge 0$, and $\fv'$ the completed Lie
subalgebra generated by the $\ad(\delta_0)^i(\delta_{2k})$ for $k\ge 1$,
$0\le i\le 2k-2$.

In \cite{HM}, Hain and Matsumoto defined a category of universal mixed 
elliptic motives, and they showed that the (completed) fundamental Lie algebra 
of that category has a monodromy representation in $\Der_0(\ff_2)$ whose image 
$\Pi$ is isomorphic to a semi-direct product $\Pi\cong \fv'\rtimes
\gamma(\nmz^{\vee})$.\footnote{The (non-completed) Lie algebra $\fv'$ is denoted
$\fu^{\rm geom}$ in \cite{HM}}  (To see that the section introduced in 
\cite{HM} coincides 
with $\gamma$ see \cite{Schneps:Emzv}.) Thus for every $D_1\in \fv'$ 
and $D_2\in \gamma(\nmz^{\vee})$, any bracketed word in these two derivations 
lies in $\fv'$.  Hain and Matsumoto further show that the Lie algebra
$\fv'\rtimes \gamma(\nmz^{\vee})$ is an $\mathfrak{sl}_2$-module, where
$\mathfrak{sl}_2$ is generated by $\eps_0$ and $\delta_0$.

By the representation theory of $\mathfrak{sl}_2$, both $\fu'$ and $\fv'$
are $\mathfrak{sl}_2$-modules, and indeed we have the equality of
Lie algebras 
\begin{equation}\label{uequalsv}
\fu'=\fv'\subset \Der_0(\ff_2).
\end{equation}

\begin{lem}\label{semidir} 
The Lie algebra generated inside $\Der_0(\ff_2)$ by
$\fu$ and $\gamma(\nmz^{\vee})$ has
the structure of a semi-direct product $\fu\rtimes\gamma(\nmz^{\vee})$.
\end{lem}

\begin{proof} As noted above, Hain and Matsumoto showed that 
$\fv'\rtimes \gamma(\nmz^{\vee})$ is an $\mathfrak{sl}_2$-module, so
by \eqref{uequalsv}, $\fu'\rtimes \gamma(\nmz^{\vee})$ is also an
$\mathfrak{sl}_2$-module. On the other hand, $\mathfrak{sl}_2$ is contained in the 
Lie subalgebra $\mathfrak{r}_{ell}$
of $\grt_{ell}$ defined by Enriquez in \cite{Enriquez:EllAss}.
The Lie algebra generated by
$\fu$ and $\gamma(\nmz^{\vee})$ is obtained by adjoining $\eps_0$ to 
$\fu'\rtimes \gamma(\nmz^{\vee})$.  To see that $\fu$ and $\gamma(\nmz^{\vee})$
form a semi-direct product, given that $\fu'$ and $\gamma(\nmz^{\vee})$
already do, we check separately that $[\eps_0,\fu']\subset\fu'$ (which is
clear since $\fu'$ is an $\mathfrak{sl}_2$-module) and that 
$[\eps_0,\gamma(\nmz^{\vee})]\subset \fu'$.  For this, we note that
$\eps_0$ lies in the Lie subalgebra $\mathfrak{r}_{ell}
\subset \grt_{ell}$, and $\grt_{ell}$ is isomorphic to the
semi-direct product $\mathfrak{r}_{ell}\rtimes \gamma(\grt)$ by \cite{Enriquez:EllAss}. Therefore since $\gamma(\nmz^{\vee})\subset\gamma(\grt)$, we must have
$[\eps_0,\gamma(\nmz^{\vee})]\subset \mathfrak{r}_{ell}$.  However, since
$\fu'\rtimes\gamma(\nmz^{\vee})$ is an $\mathfrak{sl}_2$-module, we also have
$[\eps_0,\gamma(\nmz^{\vee})]\subset \fu'\rtimes\gamma(\nmz^{\vee})$.  Thus
$[\eps_0,\gamma(\nmz^{\vee})]$ lies in the intersection of
$\fu'\rtimes\gamma(\nmz^{\vee})$ with $\mathfrak{r}_{ell}$, which is nothing but
$\fu'$.
\end{proof}

\vspace{.3cm}
In our situation, we take the two derivations
$$r(\tau)\in \fu\otimes_\Q \Ec^{\rm geom}\ \ {\rm and}\ \  \gamma(\phi_{\rm KZ}) \in \gamma(\nmz^{\vee})\otimes_\Q \overline{\Zc}.$$ 
By Lemma \ref{semidir}, any bracketed word combining these two derivations lies in 
the space $\bigl(\fu'\otimes_\Q \Ec^{\rm geom}
\otimes_\Q \overline{\Zc}\bigr)$; indeed, since a bracket of derivations
cannot be of degree $0$, it must lie in $\fu'$.  Thus in particular, we have 
$s(\tau)\in \fu'\otimes_\Q \bigl(\Ec^{\rm geom}\otimes_\Q \overline{\Zc}\bigr)$
and $s(\tau)\cdot b\in v_b(\fu')\otimes_\Q \bigl(\Ec^{\rm geom}\otimes_\Q \overline{\Zc}\bigr)$, where
$s(\tau)$ is the sum of bracketed terms in $ch_{[\,,\,]}\bigl(r(\tau),\gamma(\phi_{\rm KZ})\bigr)$ as in \eqref{frakbtau}. Altogether, we thus have
\begin{equation}\label{threecases}
\begin{cases} \gamma(\phi_{\rm KZ})\cdot b\in v_b\bigl(\gamma(\nmz^{\vee})\bigr)\otimes_\Q \overline\Zc\\
r(\tau)\cdot b\in v_b(\fu)\otimes_\Q \Ec^{\rm geom}\\
s(\tau)\in v_b(\fu')\otimes_\Q \bigl(\Ec^{\rm geom}\otimes_\Q \overline\Zc\bigr)
\end{cases}
\end{equation}
for the three terms in the right-hand side of \eqref{frakbtau}. 

We are now ready to show that $\overline{\Bc}\supset \overline{\Zc}$. 
Since the evaluation map $v_b$ is injective on $\Der_0(\ff_2)$, it is in 
particular injective on
the subspace $\fu\rtimes \gamma(\nmz^{\vee})$.  Let
$V$ denote the underlying vector space of 
$v_b(\fu)$, and $W$ that of $v_b\bigl(\gamma(\nmz^{\vee})\bigr)$.
Then the underlying vector space of the semi-direct product
$v_b\bigl(\fu\rtimes \gamma(\nmz^{\vee})\bigr)$ is the direct sum
$V\oplus W$. Writing $R=\Ec^{\rm geom}\otimes_\Q \overline{\Zc}$, we deduce
from \eqref{threecases} that
\begin{equation}\label{threecasesbis}
\gamma(\phi_{\rm KZ})\cdot b\in W\otimes_\Q R\ \ \ {\rm and}\ \ \ 
r(\tau)\cdot b,\ s(\tau)\cdot b\in V\otimes_\Q R.
\end{equation}

Let us take a linear basis of $V\oplus W$ adapted to the direct product, that is, in
which every element belongs either to $V$ or to $W$.  Write
$\mathfrak{b}(\tau)$ in this basis, and consider the coefficient of a 
basis element $w\in W$.  By (\ref{threecasesbis}), 
$\mathfrak{b}(\tau)$ decomposes as a sum
of two terms, $\gamma(\phi_{\rm KZ})\cdot b\in W\otimes_\Q R$ and
$r(\tau)\cdot b+s(\tau)\cdot b\in V\otimes_\Q R$.  Therefore, 
the coefficient in $\mathfrak{b}(\tau)$ of any basis element $w\in W$ 
is equal to the coefficient of $w$ in $\gamma(\phi_{\rm KZ})\cdot b$ written
in the same basis of $W$.
However, since we know that the coefficients of $\phi_{\rm KZ}$ written in
a basis of $\nmz^{\vee}$ multiplicatively generate all of $\overline{\Zc}$,
and since $\gamma$ is injective and defined over $\Q$, the same holds
for the coefficients of $\gamma(\phi_{\rm KZ})$ written in a basis of
$\gamma(\nmz^{\vee})$, and then since $v_b$ is injective on this space and
defined over $\Q$, the same again holds for the coefficients of 
$v_b\bigl(\gamma(\phi_{\rm KZ})\bigr)$ written in a basis of $W$.
Thus the coefficients in $\mathfrak{b}(\tau)$ of the elements of the
basis of $W$ span all of $\overline{\Zc}$, so 
$\overline{\Bc}\supset \overline{\Zc}$.

Now we will show that $\overline{\Bc}\supset \Ec^{\rm geom}$. Here we need
to deal with the $s(\tau)$ term.  For this, we will proceed by induction on
the weight.  We take a basis for $V$ which is the image under $v_b$ of 
a weight-graded basis of $\fu$.  The Lie series $\mathfrak{b}(\tau)$ starts 
in weight 1 with the term $2\pi i\tau a$, which comes from the 
$2\pi i\tau\eps_0$ term of $r(\tau)$ acting on
$b$.  Since $r'(\tau):=r(\tau)-2\pi i\tau\eps_0$ and $s(\tau)$ lie in $\fu' \otimes R$, these derivations are 
strictly weight-increasing, so there are no other weight 1 terms in $\mathfrak{b}(\tau)$.  Thus $2\pi i\tau \in\overline{\Bc}$. We use this result as the
base case, fix $n>1$, and make the induction hypothesis 
that for all $m<n$, the coefficients in $\mathfrak{b}(\tau)$ of the 
weight $m$ basis elements of $V$ span the weight $m$ graded part of 
$\Ec^{\rm geom}$, so that $\overline{\Bc}$
contains the weight graded parts of $\Ec^{\rm geom}$ for all weight $m<n$.  
Consider the coefficient in $\mathfrak{b}(\tau)$ of
a weight $n$ basis element $v\in V$.  Each coefficient is of the form
$r_v+s_v$ where $r_v$ is the coefficient of $v$ in $r(\tau)\cdot b$ and
$s_v$ is the coefficient of $v$ in $s(\tau)\cdot b$. However, the part of 
the derivation $s(\tau)$ that takes $b$ to a weight $n$ polynomial is
made up of brackets of parts of $r(\tau)$ and of $\gamma(\phi_{\rm KZ})$ of
strictly smaller weight, and whose coefficients are thus algebraic
combinations of coefficients of $r(\tau)$ of lower weight, 
which already appear in $\overline{\Bc}$ by the induction hypothesis, and 
of coefficients of $\gamma(\phi_{\rm KZ})$, that is, elements of $\overline{\Zc}$, 
which already lie in $\overline{\Bc}$ by the result above that 
$\overline{\Bc}\supset \overline{\Zc}$.  Thus 
not just the coefficient $r_v+s_v$, but also the term
$s_v$ lies in $\overline{\Bc}$, which proves that $r_v\in \overline{\Bc}$.
Thus all the coefficients of the weight $n$ part of $r(\tau)\cdot b$ lie in
$\overline{\Bc}$, so by induction, all the coefficients of $r(\tau)\cdot b$
lie in $\overline{\Bc}$; since $v_b$ is injective, these coefficients 
generate the same $\Q$-algebra as the coefficients of $r(\tau)$, namely
$\Ec^{\rm geom}$.  This proves that $\overline{\Bc}\supset \Ec^{\rm geom}$,
and completes the proof of the desired result $\overline{\Bc}\simeq \Ec^{\rm geom}\otimes_\Q \overline{\Zc}$. 

\vspace {.2cm}
\noindent {\bf Step 2. The case of $\overline{\Ec}$.} The argument is
similar to the one for $\overline{\Bc}$, but there is an added
subtlety coming from the fact that $r(\tau)$ lies in $\Der_0(\ff_2)$,
but $v_a$ is not injective on $\Der_0(\ff_2)$ since $\eps_0(a)=0$.  To
get around this, we will use the fact that 
$$\fu\simeq \fu'\rtimes \Q\eps_0$$
(see \eqref{fuprod}).  

It is known that the underlying vector space of the universal
enveloping algebra $\U(\mathfrak{g}\rtimes \mathfrak{h})$ of a semi-direct product of graded
$\Q$-Lie algebras $\mathfrak{g}\rtimes \mathfrak{h}$ is the space $\U(\mathfrak{g})\otimes_\Q\U(\mathfrak{h})$; in fact
$\U(\mathfrak{g}\rtimes \mathfrak{h})$ is a Hopf algebra equipped with the smash product 
(\cite{Molnar:Hopf}) and with the standard coproduct for which elements of 
$\mathfrak{g}\rtimes \mathfrak{h}$ are primitive. The graded dual $\U(\mathfrak{g}\rtimes \mathfrak{h})^\vee$ 
has underlying $\Q$-algebra $\U(\mathfrak{g})^\vee\otimes_\Q\U(\mathfrak{h})^\vee$ 
(and is equipped with the smash coproduct).  
Using this, we see that as $\Q$-algebras,
$$\mathcal{U}(\fu)^\vee\simeq \mathcal{U}(\fu')^\vee\otimes_\Q \mathcal{U}(\Q\eps_0)^\vee.$$
Under the identification $\mathcal{U}(\fu)^\vee\simeq \Ec^{\rm geom}$ of
Theorem \ref{duality}, this translates to
\begin{equation}\label{E0geom}
\Ec^{\rm geom}\simeq \Ec_0^{\rm geom}\otimes_\Q \Q[2\pi i\tau],
\end{equation}
where $\Ec_0^{\rm geom}$ is multiplicatively generated by the coefficients
of $r'(\tau)=r(\tau)-2\pi i\tau\eps_0$.  In particular, the subspace
inclusion $\fu'\subset \fu$ corresponds in the dual to the surjection
$$\Ec^{\rm geom}\rightarrow\!\!\!\!\!\rightarrow \Ec^{\rm geom}/\langle 2\pi i\tau\rangle\simeq \Ec_0^{\rm geom}.$$
The derivation $\delta(\tau)=ch_{[\,,\,]}\bigl(r(\tau),\gamma(\phi_{\rm KZ})\bigr)$
lies in $\fu\otimes_\Q \bigl(\Ec^{\rm geom}\otimes_\Q \overline{\Zc}\bigr)$, but if we
consider the derivation $\hat\delta(\tau)$ obtained by reducing its coefficients
mod $2\pi i\tau$, then we see that 
$$\hat\delta(\tau)=ch_{[\,,\,]}\bigl(r'(\tau),\gamma(\phi_{\rm KZ})\bigr)\in\fu'\otimes_\Q 
\bigl(\Ec_0^{\rm geom}\otimes_\Q \overline{\Zc}\bigr),$$
where $r'(\tau)=r(\tau)-2\pi i\tau\eps_0$.

Let $\widehat{\overline{E}}(\tau)$ and $\hat{\mathfrak{e}}(\tau)$ be
the power series obtained from $\overline{E}(\tau)$ and $\mathfrak{e}(\tau)\in
\ff_2\otimes_\Q \bigl(\Ec^{\rm geom}\otimes_\Q \overline{\Zc}\bigr)$ by
reducing the coefficients mod $2\pi i\tau$.  Then we have
$$\widehat{\overline{E}}(\tau),\ \hat{\mathfrak{e}}(\tau)\in \ff_2\otimes_\Q
\bigl(\Ec_0^{\rm geom}\otimes_\Q \overline{\Zc}\bigr)$$
and
$$\widehat{\overline{E}}(\tau)=\exp\bigl(\hat\delta(\tau)\bigr)\cdot a,\ \ \hat{\mathfrak{e}}(\tau)=\hat\delta(\tau)\cdot a,$$
so
$$\hat{\mathfrak{e}}(\tau)=\log_a\,\widehat{\overline{E}}(\tau).$$
Thus the $\Q$-algebras generated by the coefficients of $\hat{\mathfrak{e}}(\tau)$
and by $\widehat{\overline{E}}(\tau)$ are equal.  Denote this $\Q$-algebra
by $\mathfrak{E}\subset \Ec_0^{\rm geom}\otimes_\Q \overline{\Zc}$.  Since
$$\hat{\mathfrak{e}}(\tau)=ch_{[\,,\,]}\bigl(r'(\tau),\gamma(\phi_{\rm KZ})\bigr)\cdot a=r'(\tau)\cdot a+\gamma(\phi_{\rm KZ})\cdot a+\hat{s}(\tau)\cdot a$$
where $\hat{s}(\tau)$ denotes the bracketed terms in the Campbell-Hausdorff
product, we can use the identical arguments to the case of $\mathfrak{b}(\tau)$
above to prove that $\mathfrak{E}\supset \overline{\Zc}$.  We also again use
induction on the weight to prove that $\mathfrak{E}$ contains all the 
coefficients of $r'(\tau)$.  The only difference with the case of $\mathfrak{b}(\tau)$ is the base case, which is no longer in weight 1. The lowest weight term
of $\hat{\mathfrak{e}}(\tau)$ is of weight 3, and it comes from the term 
$\Gc_2(\tau)\eps_2$ of $r'(\tau)$ acting on $a$ (note that $\hat{s}(\tau)\cdot a$ has no
terms of weight lower than 7).  Thus the same induction as above works to prove
that every coefficient of $r'(\tau)$ lies in $\mathfrak{E}$, so we find that
$\mathfrak{E}\simeq \Ec_0^{\rm geom}\otimes_\Q \overline{\Zc}$.

Since $\mathfrak{E}$ is the $\Q$-algebra generated by the reduction of
$\overline{E}(\tau)$ mod $2\pi i\tau$ and it is equal to
$\Ec_0^{\rm geom}\otimes_\Q \overline{\Zc}$, we see that 
$\mathfrak{E}[2\pi i\tau]\simeq \Ec^{\rm geom}\otimes_\Q \overline{\Zc}$.
This means that the composition map
$$\overline{\Ec}\hookrightarrow \Ec^{\rm geom}\otimes_\Q \overline{\Zc}
\rightarrow\!\!\!\!\!\rightarrow 
\bigl(\Ec^{\rm geom}\otimes_\Q \overline{\Zc}\bigr)/
\langle 2\pi i\tau\otimes 1\rangle\simeq \Ec_0^{\rm geom}\otimes_\Q 
\overline{\Zc}$$
is surjective, which gives us the desired equality
$\overline{\Ec}[2\pi i\tau] \simeq \Ec^{\rm geom}\otimes_\Q \overline{\Zc}.$

\vspace{.1cm}
\noindent {\bf Step 3. The case of $\overline{\Ac}$.}  The argument here is
identical to the one for $\overline{\Ec}$.  We again have the problem that,
as discovered independently by Enriquez and by Hain and Matsumoto, there
exists a unique (up to scalars) derivation $\eta\in \fu$ that annihilates $t_{01}$;
$\eta$ is an infinite series in the 
$\eps_{2k}$, $k\ge 0$, with rational coefficients (\cite{Enriquez:EllAss}, proof of Proposition 6.3; \cite{HM}, Equation (27.3))
$$\eta=\sum_{k=0}^{\infty}(2k-1)\frac{B_{2k}}{(2k)!}\eps_{2k}$$
The exact nature of $\eta$ is not important, only the fact that it has a
term in $\eps_0$ and is defined over $\Q$; this shows that
$v_{t_{01}}$ is injective on $\fu'$.  We let $W=v_{t_{01}}(\nmz^{\vee})$
and $V=v_{t_{01}}(\fu')$, and choose a basis of $V$ which is the image under
$v_{t_{01}}$ of a weight-graded basis of $\fu'$ as above.  We then proceed 
exactly as in the case of $\overline{\Ec}$ to show that the $\Q$-algebra 
generated by the coefficients of $\exp\bigl(\hat\delta(\tau)\bigr)\cdot t_{01}$,
(which is the reduction of $\log\,\overline{A}(\tau)$ mod $2\pi i\tau$) contains $\overline{\Zc}$.  For the induction argument, even 
if $V$ is not itself graded by the weight, we can simply transport the 
weight-grading of $\fu'$ to $V$ and use induction on that (or equivalently,
do the induction on the lowest weight parts of the basis elements).
Since $t_{01}$ starts with $-a$, the argument is identical to the one for
$a$ above, and shows that the $\Q$-algebra generated by the coefficients
of $\hat\delta(\tau)\cdot t_{01}$, and thus also by those of
$\exp\bigl(\hat\delta(\tau)\bigr)\cdot t_{01}$, is isomorphic to
$\Ec_0^{\rm geom}\otimes_\Q \overline{\Zc}$, so that again we have
$\overline{\Ac}[2\pi i\tau]\simeq \Ec^{\rm geom}\otimes_\Q \overline{\Zc}$ as desired.
This concludes the proof.
\end{proof}

\section{The elliptic double shuffle and push-neutrality relations} 
In this section we use mould theory to explore and compare algebraic 
relations between the $\overline{E}$-EMZs with algebraic relations
between the $\overline{A}$-EMZs.  The first paragraph, \S 4.1, gives
a brief exposition of the necessary definitions and results from mould theory.

Our main result on $\overline{E}$-elliptic multizetas, in \S 4.2, arises as a corollary of 
the preceding theorem and the main result of \cite{Schneps:Emzv}. We show that
$\overline E(\tau)$ satisfies a certain double family of algebraic
relations called the {\it elliptic double shuffle relations},
related to the familiar double shuffle properties of
$\Phi_{\rm KZ}$, but more similar to the graded double shuffle relations studied 
for example in \cite{Brown:depth}.  Further, we show that if
one assumes certain reasonable conjectures from multizeta and 
Grothendieck-Teichm\"uller theory, the elliptic double shuffle 
relations form a {\it complete} set of algebraic relations 
for the $\overline{E}$-EMZs. We compute these relations and the
associated dimensions in detail in depth 2.

Finally, in \S 4.3 we consider a double family of relations satisfied by
$\overline{A'}(\tau)$ (or more precisely by the log of this series).
The first family is just the usual shuffle, but the second is very different 
from the second shuffle relation satisfied by 
$\overline{E}(\tau)$. We call it the family of {\it push-neutrality relations},
and show that it is related to the  \textit{Fay relations} studied in
\cite{Matthes:Edzv}.  We compute the relations and the associated dimensions
in depth 2 and show that they are different from those of $\overline{E}(\tau)$,
which means that while we know by Theorem \ref{tensprod} that
$\overline{\Ec}[2\pi i\tau]=\overline{\Ac}[2\pi i\tau]$,
the algebras $\overline{\Ec}$ and $\overline{\Ac}$ themselves are not equal
nor even isomorphic as filtered algebras (that is, the dimensions of the
associated gradeds are not equal).

\subsection{A very brief introduction to moulds} \label{ssec:moulds}

We recall some notions from Ecalle's theory of moulds \cite{Ecalle:Dimorphie,Ecalle:Flexion} that we will need in order to study algebraic relations between elliptic multizetas. Besides the original references, a more detailed introduction to moulds can be found in \cite{Schneps:ARI}.

\subsubsection{Moulds and bialternality}
In this article, we use the term `mould' to refer only
to rational-function valued moulds with coefficients in $\Q$. Thus, 
a mould is a family of functions 
$$\{P(u_1,\ldots,u_r)\mid r\ge 0\}$$
with $P(u_1,\ldots,u_r)\in \Q(u_1,\ldots,u_r)$.  In particular
$P(\emptyset)$ is a constant.  The {\it depth $r$} part of a mould is
the function $P(u_1,\ldots,u_r)$ in $r$ variables. By defining addition
and scalar multiplication of moulds in the obvious way, that is, depth by
depth, moulds form a $\Q$-vector space that we call $Moulds$.  We
write $Moulds_{pol}$ for the subspace of polynomial-valued moulds.
The vector space $ARI$ is the subspace of $Moulds$ consisting of
moulds $P$ with constant term $P(\emptyset)=0$, and $ARI_{pol}$ is
again the subspace of polynomial-valued moulds in $ARI$.

The standard mould multiplication $mu$ is given by
\begin{equation}\label{mumould}
mu(P,Q)(u_1,\ldots,u_r)=\sum_{i=0}^r P(u_1,\ldots,u_i)Q(u_{i+1},\ldots,u_r).
\end{equation}
For simplicity, we write $P\,Q=mu(P,Q)$.  This multiplication
defines a Lie algebra structure on $ARI$ with Lie bracket
$lu$ defined by $lu(P,Q)=mu(P,Q)-mu(Q,P)$.

We now introduce four operators on moulds.  The $\Delta$-operator on moulds
is defined as follows: if $P\in ARI$, then
\begin{equation}\label{Deltamould}
\Delta(P)(u_1,\ldots, u_r) =  u_1\cdots u_r (u_1+\cdots+u_r) P(u_1,\ldots,u_r).
\end{equation}
The $dar$-operator is defined by
\begin{equation}\label{darmould}
dar(P)(u_1,\ldots, u_r) =  u_1\cdots u_r\, P(u_1,\ldots,u_r).
\end{equation}
The $push$-operator is defined by
\begin{equation}\label{pushmould}
push(B)(u_1,\ldots,u_r)=B(u_2,\ldots,u_r,-u_1-\cdots-u_r).
\end{equation}
Finally, the {\it swap} operator is defined by
\begin{equation}\label{swapmould}
swap(A)(v_1,\ldots,v_r)=A(v_r,v_{r-1}-v_r,\ldots,v_1-v_2).
\end{equation}
Here the use of the alphabet $v_1,v_2,\ldots$ instead of $u_1,\ldots,u_r$
is purely a convenient way to distinguish a mould from its swap.

The main property on moulds that we will need to consider is {\it alternality}.
A mould $P$ is said to be {\it alternal} if for all $r>1$ and
for $1\le i\le [r/2]$, we have
\begin{equation}\label{alternal}
\sum_{{\bf u}\in sh((u_1,\ldots,u_i),(u_{i+1},\ldots,u_r))} P({\bf u})=0,
\end{equation}
where the set of $r$-tuples $sh\bigl((u_1,\ldots,u_i),(u_{i+1},\ldots,
u_r)\bigr)$ is the set 
$$\bigl\{(u_{\sigma^{-1}(1)},\ldots,u_{\sigma^{-1}(r)})\,\bigl|\,
\sigma\in S_r\ \ {\rm such\ that}\ 
\sigma(1)<\cdots<\sigma(i),\ \  \sigma(i+1)<\cdots<\sigma(r)\bigr\}.$$

The mould $swap(A)$ is alternal if it satisfies the same property 
\eqref{alternal} in the variables $v_i$.

We write $ARI^{al}$ for the space of alternal moulds in $ARI$, and
$ARI^{al/al}$ for the space of moulds which are alternal and whose
swap is also alternal.  We also consider moulds which are alternal
and whose swap is alternal up to addition of a constant-valued mould. 
The space of these moulds is denoted $ARI^{al*al}$ and we call them
{\it bialternal}.  

\subsubsection{From power series to moulds}
Let $c_i=\ad(a)^{i-1}(b)$ for $i\ge 1$ as in \S 3.1.  Let the depth of a 
monomial $c_{i_1}\cdots c_{i_r}$ be the number $r$ of $c_i$ in the monomial;
the weight (degree in $a$ and $b$) and the depth 
form a topological bigrading on the formal power series ring 
$\Q\langle\!\langle C\rangle\!\rangle =
\Q\langle\!\langle c_1,c_2,\ldots\rangle\!\rangle$ on the free variables $c_i$. Here, by ``topological bigrading'' we mean that $\Q\langle\!\langle C\rangle\!\rangle$ is the direct product (not the direct sum) $\prod_{n,d\geq 0}V_{n,d}$ of its components of weight $n$ and depth $d$.
Similarly, we write
$L[\![C]\!]=\Lie[\![c_1,c_2,\ldots]\!]$
for the corresponding Lie algebra. By Lazard 
elimination, we have an isomorphism
\begin{equation*}\label{Qc}
\Q a\oplus L[\![C]\!]\cong \ff_2=\Lie[\![a,b]\!].
\end{equation*}

Following \'Ecalle, let $ma$ denote the standard vector space isomorphism from 
$\Q\langle\!\langle C\rangle\!\rangle$ to the space 
$(Moulds)^{pol}$ defined by
\begin{align}ma:\Q\langle\!\langle C\rangle\!\rangle&\buildrel\sim\over\rightarrow (Moulds)^{pol}\notag\\
c_{k_1}\cdots c_{k_r}&\mapsto (-1)^{k_1+\cdots+k_r-r}
u_1^{k_1-1}\cdots u_r^{k_r-1}\label{ma}
\end{align}
on monomials, extended by linearity to all power series.

It is well known that $p\in\Q\langle\!\langle C\rangle\!\rangle$ satisfies the shuffle relations 
if and only if $p$ is a Lie series, that is, $p\in \Lie[\![C]\!]$.  
The alternality property on moulds is analogous to these shuffle
relations, that is a series $p\in\Q\langle\!\langle C\rangle\!\rangle$ satisfies the shuffle 
relations if and only if $ma(p)$ is alternal (see e.g. \cite{Schneps:ARI}, \S 2.3 and Lemma 3.4.1]).
Writing $ARI^{al}$ for the subspace of alternal moulds and 
$ARI_{pol}^{al}$ for the subspace of alternal polynomial-valued moulds, this
shows that the map $ma$ restricts to a Lie algebra isomorphism 
$$ma:\Lie[\![C]\!]\buildrel{ma}\over\longrightarrow ARI_{lu,pol}^{al}.$$ 

We now recall that for any mould $P\in ARI$, \'Ecalle defines
a derivation $arit(P)$ of the Lie algebra $ARI_{lu}$.  We do not
need to recall the definition of $arit$ here (but it is given in
\S 4.4 below where we prove a technical lemma). For now it is enough to 
know that when restricted to polynomial-valued moulds, it is related to the 
Ihara derivations \eqref{iharader} via the morphism $ma$:
$$ma\bigl(D_f(g)\bigr)=-arit\bigl(ma(f)\bigr)\cdot ma(f).$$
For each $P\in ARI$, we also define the derivation
\begin{equation}\label{aratdef}
arat(P)=-arit(P)+\ad(P),
\end{equation}
where $\ad(P)\cdot Q=lu(P,Q)$.  
\vskip .2cm
We recall from \S 3.1 of \cite{Schneps:Emzv} the definition
$$Darit(P)=-dar\Bigl(arit\bigl(\Delta^{-1}(P)\bigr)-ad\bigl(\Delta^{-1}(P)\bigr)\Bigr)\circ dar^{-1}.$$
While complicated, it is shown in \cite{Schneps:Emzv} that for any mould $P\in ARI$,
$Darit(P)$ defines a derivation on $ARI$ for the $lu$-bracket that annihilates the
mould $-u_1=ma([a,b])$, and that the derivation $Darit(P)$ can be considered as taking
mould-values on $a$ and $b$, namely $Darit(P)\cdot a=P$ and $Darit(P)\cdot b$ is by
definition $Darit(P)\cdot ma(b)$, where $ma(b)$ is the mould that takes value $1$ in
depth 1 and 0 in all other depths.  If $P=ma(p)$ for a push-invariant Lie series
$p(a,b)$, then $q=Darit(P)\cdot b$ is the unique Lie power series such that the
derivation $a\mapsto p$, $b\mapsto q$ annihilates $[a,b]$.

It is shown in Proposition 3.2.1 of \cite{Schneps:Emzv} that the transport of the $ari$-bracket on $ARI$ under the vector space isomorphism $\Delta$ is equal to the $Dari$-bracket
$$Dari(P,Q)=Darit(P)\cdot Q-Darit(Q)\cdot P.$$
Restricted to the subspace $ma(\ff_2^{\rm push})$, this bracket extends the 
$\langle \,,\,\rangle$ bracket on $\ff_2^{\rm push}$ defined in \eqref{expa}.
We can define a  exponential map $\exp_{Dari}:ARI\rightarrow GARI$ by
$$\exp_{Dari}(P)=1+P+Darit(P)(P)+{\frac{1}{2}}Darit(P)^2(P)+\cdots,$$
and put a new corresponding group law $Dgari$ on $GARI$ by
$$Dgari\bigl(exp_{Dari}(P),exp_{Dari}(Q)\bigr)=exp_{Dari}\bigl(ch_{Dari}(P,Q)\bigr).$$
There is a unique group homomorphism $\Delta^*:GARI_{gari}\rightarrow GARI_{Dgari}$
that makes the following diagram commute.
\begin{equation}\label{deltastar}
\xymatrix{GARI\ar[r]^{\Delta^*}&GARI\\
ARI\ar[u]^{\exp_{ari}}\ar[r]^{\Delta}&ARI\ar[u]_{\exp_{Dari}}.}
\end{equation}

\vspace{.3cm}
To summarize, 
\vskip .2cm\noindent $\bullet$
The $mu$-multiplication extends multiplication on power series to all moulds;
\vskip .2cm\noindent $\bullet$
The $lu$-bracket extends the usual bracket $[\,,\,]$ on $\ff_2$ to all moulds in $ARI$;
\vskip .2cm\noindent $\bullet$
The $ari$-bracket extends the twisted Magnus (or Ihara) bracket on the underlying
vector space of $\ff_2$ to all moulds in $ARI$;
\vskip .2cm\noindent $\bullet$
The $arit(F)$ derivations for $F\in ARI$ extend Ihara's derivations $D_f$ 
for $f\in \ff_2$;
\vskip .2cm\noindent $\bullet$
The alternality property extends to all moulds the shuffle property on
power series (that is, the property that says a power series in $a$ and $b$ is a Lie series).
\vskip .2cm\noindent $\bullet$
The bialternality property extends to all moulds the
linearized double shuffle property defining the linearized double shuffle space $\ls\subset
\Q\langle\!\langle a,b\rangle\!\rangle$.

\vspace{.3cm}
We end this subsection by adding to this list the group-like version of these notions.
\vskip .2cm\noindent $\bullet$
The group $GARI$ of moulds having constant term $1$, equipped with the $mu$-multiplication,
extends the group of power series in $a,b$ with constant term 1.
\vskip .2cm\noindent $\bullet$
The $ari$-exponential $\exp_{ari}:ARI\rightarrow GARI$ extends the twisted Magnus
exponential $\exp^\odot:\ff_2\rightarrow F_2(\Q)$, and its inverse $\log_{ari}$
extends the twisted Magnus logarithm.
\vskip .2cm\noindent $\bullet$ 
A mould $A\in GARI$ is said to be {\it symmetral} if 
and only if $\log_{ari}(A)$ is alternal.  The symmetrality property
extends the property of being group-like for a power series with constant term 1. 
\vskip .2cm\noindent $\bullet$ 
The bisymmetrality property that a mould $A\in GARI$ is symmetral with symmetral
swap extends the group-like double shuffle relations on power series in $a$ and $b$.
A mould $A\in GARI$ is bisymmetral if and only if $\log_{ari}(A)$ is bialternal.
\vskip .2cm\noindent $\bullet$ 
The derivation $Darit(P)$ extends the derivation associated to $p\in \ff_2^{\rm push}$
that maps $a\mapsto p$ and annihilates $[a,b]$.
\vskip .2cm\noindent $\bullet$ 
The Lie bracket $Dari$ on $ARI$ extends the Lie bracket $\langle\,,\,\rangle$ on
$\ff_2^{\rm push}$ defined in \eqref{expa}.
\vskip .2cm\noindent $\bullet$ 
The exponential map $\exp_{Dari}:ARI\rightarrow GARI$ extends to all moulds in $ARI$
the exponential map $\exp_a$ on $\ff_2^{\rm push}$ defined in \eqref{diag0}, except that if $p\in \ff_2^{\rm push}$ and 
$P=ma(P)$, then we have
$$\exp_{Dari}(P)=1-a+ma\bigl(\exp_a(p)\bigr).$$

\subsubsection{Reminders on the elliptic double shuffle Lie algebra $\ds_{ell}$}

\vspace{.2cm}
The elliptic double shuffle relations are based on the mould properties
given in the following definition.

\begin{dfn}\label{deltabi}
A mould $P\in ARI$ is said to be $\Delta$-bialternal if $\Delta^{-1}(P)$ is
bialternal, and we write $ARI^{\Delta\hbox{-}al*al}$ for the space of
such moulds. Similarly, a mould $Q\in GARI$ is said to be $\Delta^*$-bisymmetral
if $(\Delta^*)^{-1}(Q)$ is bisymmetral, or equivalently (by \eqref{deltastar}), if 
$\log_{Dari}(Q)$ is $\Delta$-bialternal.
\end{dfn}

The {\it elliptic double shuffle Lie algebra $\ds_{ell}$} is defined as
follows in \cite{Schneps:Emzv}.

\begin{dfn}The {\it elliptic double shuffle Lie algebra}
$\ds_{ell}$ is the subspace of $\ff_2$ such that
\begin{equation}\label{dselldef}
ma\bigl(\ds_{ell}\bigr)=ARI_{pol}^{\Delta\hbox{-}al*al},
\end{equation}
that is, $\ds_{ell}$ consists of the $f\in \Q\langle\!\langle a,b\rangle\!\rangle$
such that $ma(f)$ is $\Delta$-bialternal.  
\end{dfn}

The main point about these definitions is that $\Delta$-alternality 
for Lie-like power series (and its group-like version 
$\Delta^*$-bisymmetrality for group-like power series), are in fact
``transports'' of the usual Lie-like double shuffle relations satisfied by
$\phi_{\rm KZ}$ (resp.~the group-like double shuffle relations satisfied
by $\Phi_{\rm KZ}$). In other words, the mould associated to 
$\gamma(\phi_{\rm KZ})$ is $\Delta$-bialternal (and the mould associated
to $\Gamma(\overline{\Phi_{\rm KZ}})=\overline{E}$ is $\Delta^*$-bisymmetral).
This result, together with the fact that these properties
are preserved by the action of $g(\tau)$, so that the same elliptic double 
shuffle relations are also satisfied by $\overline{E}(\tau)$ (group-like
version) and $\mathfrak{e}(\tau)$ (Lie-like version), forms the main theorem
of this section, which is stated and proved in the following subsection.
We first need to recall some known properties of the elliptic double shuffle
Lie algebra.

The following statements are essentially contained in \cite{BS} and 
\cite{Schneps:Emzv}. We give some details of the proofs for the convenience of 
the reader.  Recall from \eqref{expa} that $v_a$ gives an injective map of 
Lie algebras
$$\Der'_0(\ff_2)\rightarrow \ff_2^{\rm push},$$
where the right-hand Lie algebra is equipped with the bracket $\langle\ ,\ 
\rangle$ which is simply the transport of the
bracket of derivations by $v_a$.

\begin{prop}\label{dsell} The space $\ds_{ell}$ satisfies the following properties.
\begin{enumerate}
\item[(i)]
$\ds_{ell}\subset \ff_2^{\rm push}$.
\item[(ii)]
$\ds_{ell}$ is a Lie algebra under the bracket $\langle\,,\,\rangle$
on $\ff_2^{\rm push}$.
\item[(iii)]
There are Lie algebra injections
$$\fu'\rtimes \gamma(\nmz^{\vee})\subset \widetilde{\grt}_{ell}\hookrightarrow\ds_{ell},$$
where $\widetilde{\grt}_{ell}$ is the Lie subalgebra of
$\grt_{ell}$ generated by $\gamma(\grt)$ and $\fu'$, which is mapped into
$\ds_{ell}$ by $v_a$.
\end{enumerate}
\end{prop}
\begin{proof}
For (i), by definition, elements of $ma(\ds_{\rm ell})$ are of the form $\Delta(P)$ where
$P$ is an alternal mould. The map $\Delta$ trivially preserves alternality, therefore
$\Delta(P)\in \ds_{ell}$ is alternal, and since it is a polynomial mould, it corresponds
to a Lie series $p\in \ff_2$.  The mould $P$ is in fact bialternal, and it is shown in 
Lemma 2.5.5 of \cite{Schneps:ARI} that bialternal moulds are push-invariant.  The map 
$\Delta$ trivially preserves push-invariance, so $\Delta(P)$ is also push-invariant.
It is shown in Proposition 12 of \cite{RS} that for $p\in \ff_2$, $ma(p)$ is a 
push-invariant mould if and only $p\in \ff_2^{\rm push}$.
Thus $\ds_{\rm ell} \subset \ff_2^{\rm push}$.

For (ii), we saw in the previous subsection that the $Dari$-bracket 
extends the $\langle\,,\,\rangle$ bracket on $\ff_2^{\rm push}$ to all of $ARI$.
So to prove (ii), it is enough to prove that the subspace of $\Delta$-bialternal
moulds of $ARI$ is closed under the $Dari$-bracket.  Since the $\Dari$-bracket is
the transport of the $ari$-bracket by $\Delta$ (per the previous subsection),
this is equivalent to showing that the subspace of bialternal moulds in $ARI$
is closed under the $ari$-bracket. However, this is a well known result of \'Ecalle
(cf.~\cite{Schneps:ARI}, Thm. 2.5.6 for a complete proof.)

Finally, for (iii), we saw in \eqref{inclusions} that $\nmz^{\vee}\subset\grt$, so
$\gamma(\nmz^{\vee})\subset\gamma(\grt)$, which settles the first inclusion.
For the second
inclusion, we show separately that $\gamma(\grt)$ and $\fu'$ are both mapped into $\ds_{\rm ell}$. The first inclusion $v_a(\gamma(\grt))\subset \ds_{\rm ell}$ is given in
Theorem 1.3.1 of \cite{Schneps:Emzv}. 

For the inclusion $v_a(\fu') \subset \ds_{\rm ell}$, we use the Lie
algebra $\fv$ generated by the derivations $\delta_{2k}$ defined in \eqref{deltas}.
By \eqref{uequalsv}, we have $\fu'\subset \fv$.  We show that 
$v_a(\fv)\subset \ds_{ell}$.
For all $k\ge 0$, the mould $U_{2k}:=ma\bigl(\delta_{2k}(a)\bigr)$ is equal to 
$u_1^{2k}$ in depth 1 and is zero in all other depths, so the mould 
$\Delta^{-1}(U_{2k})$ is equal to $u_1^{2k-2}$ in depth 1 and zero in all 
other depths.  A mould concentrated in depth 1 is bialternal by default, 
so the $U_{2k}$ are all $\Delta$-bialternal; thus $v_a(\delta_{2k})=
\delta_{2k}(a)\in \ds_{ell}$ for all $k\ge 0$.  Since $\ds_{ell}\subset 
\ff_2^{\rm push}$ and the bracket $\langle \,,\,\rangle$ on $\ds_{ell}$ 
corresponds to the bracket of derivations, the values on $a$ of all brackets 
of the derivations $\delta_{2k}$ lie in $\ds_{ell}$, so $v_a$ gives an
injective map $\fv\hookrightarrow \ds_{ell}$.  This concludes
the proof.
\end{proof}
\vspace{.1cm}
\begin{rmk} \label{rmk:dsellvspls}
In \cite{Brown:depth3}, a Lie algebra called $\pls$ (for ``polar linearized shuffle'') is introduced, which is essentially equivalent to $\ds_{ell}$. It is also shown that $\fu$ embeds into $\pls$ (\cite{Brown:depth3}, Proposition 4.6) and, moreover, it is asked whether the equality $\fu=\pls$ holds. Proposition \ref{dsell}.(iii) implies that $\ds_{ell}$ is, in fact, much larger than $\fu$. More precisely, Enriquez (\cite{Enriquez:EllAss}, \S 7) has shown that $\fu$ lies in the kernel of the surjection $\grt_{ell} \rightarrow \grt$ from which it follows that the image $\gamma(\grt) \subset \widetilde{\grt}_{ell}$ of $\grt$ under the splitting $\gamma$ is disjoint from $\fu$. In particular, the Lie algebra $\fu$ cannot equal $\ds_{ell}$.
\end{rmk}
\subsection{The elliptic double shuffle relations} We can now
give the elliptic double shuffle property satisfied by the
reduced elliptic generating series $\overline{E}(\tau)$, in terms of moulds.
Let
$$\mathfrak{e}_m(\tau)=ma\bigl(\mathfrak{e}(\tau)\bigr)$$
and
$$\overline{E}_m(\tau)=ma\bigl(\overline{E}(\tau)\bigr)$$
\begin{thm}\label{ellipticds} (i) The mould $\mathfrak{e}_m(\tau)$
is $\Delta$-bialternal, that is, $\Delta^{-1}\bigl(\mathfrak{e}_m(\tau)\bigr)$
is a bialternal mould.
\vskip .1cm
\noindent (ii) The mould $\overline{E}_m(\tau)$ is $\Delta^*$-bisymmetral,
that is, $(\Delta^*)^{-1}\bigl(\overline{E}_m(\tau)\bigr)$ is bisymmetral.
\end{thm}
\begin{proof} (i) We have
$\mathfrak{e}(\tau)=r(\tau)\cdot a+\gamma(\phi_{\rm KZ})\cdot a+s(\tau)\cdot a$.
Let $\mathfrak{e}=\gamma(\phi_{\rm KZ})\cdot a$, so that
$\mathfrak{e}\in v_a\bigl(\gamma(\grt)\bigr)\otimes_\Q\overline\Ec$. By the 
proof of Theorem \ref{tensprod}, we have $r(\tau)\cdot a+s(\tau)\in V
\otimes_\Q \overline{\Ec}$ where $V=v_a(\fu')$.
Therefore, $\mathfrak{e}(\tau) \in \widetilde{\grt}_{ell}\otimes_\Q \overline{\Ec}$ by the definition of $\widetilde{\grt}_{ell}$, and since $v_a(\widetilde{\grt}_{ell}) \subset \ds_{ell}$ by Proposition \ref{dsell} (iii), we also have
$\mathfrak{e}(\tau)\in \ds_{ell}\otimes_\Q\overline\Ec$, which proves
the theorem thanks to \eqref{dselldef}.

\vspace{.2cm}
(ii) If the mould $\overline{E}_m(\tau)$ were equal to 
$\exp_{Dari}\bigl(\mathfrak{e}_m(\tau)\bigr)$, that is, if $\overline{E}(\tau)$ were equal
to $\exp_a\bigl(\mathfrak{e}(\tau)\bigr)$, this result would be immediate by
definition.  However, this is not quite the case.  Indeed, if it were true, it would
mean that as power series, $\overline{E}(\tau)$ would be equal to $\exp_a\bigl(\mathfrak{e}(\tau)\bigr)$, but this is impossible due to the fact that 
$\mathfrak{e}(\tau)=D(a)$
where $D=ch_{[\,,\,]}\bigl(r(\tau),\gamma(\phi_{\rm KZ}\bigr)\not\in
\Der'_0(\ff_2)$, whereas the 
diagram \eqref{diag0} only commutes for derivations lying in $\Der'_0(\ff_2)$.
In order to show that $\overline{E}_m(\tau)$ is nonetheless $(\Delta^*)^{-1}$-bisymmetral,
we will show that there exists a derivation $\delta\in \fu'\rtimes\gamma(\nmz^{\vee})$
such that the automorphism $\exp(\delta)$ coincides with $\exp(D)$ on $a$.  

Because $D\in \Der_0(\ff_2)$ and thus annihilates $[a,b]$, the automorphism 
$\exp(D)$ fixes $[a,b]$.  Define $\alpha$ to be the automorphism of $F_2(\Ec^{\rm geom}
\otimes_\Q \overline{\Zc})$ given by 
$$\alpha=\exp(D)\circ \exp(-2\pi i\tau\eps_0)=\exp\bigl(ch_{[\,,\,]}(D,
-2\pi i\tau\epsilon_0)\bigr).$$  
Then
$$\alpha(a)=\exp(D)\cdot a,\ \ \ \alpha(b)=\exp(D)\cdot b-2\pi i\tau\exp(D)\cdot a.$$
We also have
$$\alpha([a,b])=[\alpha(a),\alpha(b)]=
[\exp(D)\cdot a,\exp(D)\cdot b-2\pi i\tau \exp(D)\cdot a]$$
$$=[\exp(D)\cdot a,\exp(D)\cdot b]=[a,b].$$
Let $\delta$ be the derivation 
$$\delta:=\log(\alpha)=ch_{[\,,\,]}\bigl(D,-2\pi i\tau\eps_0\bigr).$$  
Then since $\alpha$ fixes $[a,b]$, $\delta$ annihilates $[a,b]$ and it has no linear term in $b$ since this is true for both $D$ and $\eps_0$. Therefore $\delta\in \Der_0(\ff_2)$.  Furthermore, the weight $1$ part of 
$\alpha(a)$ is $a$ and the weight $1$ part of $\alpha(b)$ is $b$, so $\delta$ 
is a strictly weight-increasing derivation, that is, there is no $\eps_0$ term in 
$\delta$, so $\delta\in \Der'_0(\ff_2)$. 
Finally, to see that $\delta\in\fu'\rtimes
\gamma(\nmz^{\vee})$, we write 
\begin{equation}\label{chterms}
\begin{aligned}
\delta&=ch_{[\,,\,]}\bigl(D,-2\pi i\tau\eps_0\bigr)\\
&=r'(\tau)+\gamma(\phi_{\rm KZ})+s(\tau)+{\frac{1}{2}}[r(\tau)+\gamma(\phi_{\rm KZ})+s(\tau),-2\pi i\tau\eps_0]+\cdots,
\end{aligned}
\end{equation}
where $r'(\tau)=r(\tau)-2\pi i\tau\eps_0\in \fu'$.
The terms $r'(\tau)$ and $\gamma(\phi_{\rm KZ})$ lie in $\fu'$ and $\gamma(\nmz^{\vee})$.
It was shown in Lemma \ref{semidir} that $\fu\rtimes\gamma(\nmz^{\vee})$ is
a semi-direct product, so all bracketed words lie in $\fu$, and in fact
in $\fu'$ since a bracketed word cannot be of degree $0$ (as a derivation).
Thus all the bracketed terms in the right-hand side of \eqref{chterms} 
lie in $\fu'$.
Therefore $\delta\in \fu'$, and so $v_a(\delta)=\delta(a)\in\ds_{ell}$ by 
(iii) of Proposition~\ref{dsell}.
Thus $ma\bigl(\delta(a)\bigr)$ is $\Delta$-bialternal,
so $ma\bigl(\exp(\delta)\cdot a\bigr)=ma\bigl(\alpha(a)\bigr)$ is 
$\Delta^*$-bisymmetral.  Since $\alpha(a)=\exp(D)\cdot a=\overline{E}(\tau)$, this completes the proof of
(ii).
\end{proof}

\vspace{.2cm}
We conjecture that the elliptic double shuffle relations 
form a complete set of algebraic relations between the $\overline E$-elliptic multizetas.  This statement really breaks down into
two statements, one concerning the arithmetic part $\overline{\Zc}$ of $\overline{\Ec}$ and the 
other the geometric part $\Ec^{\rm geom}=\U(\fu)^{\vee}$. 
We show in Proposition \ref{conjs} that indeed, the completeness follows from
two conjectures: the first one a standard conjecture from multizeta
theory, and the second a similar conjecture from elliptic multizeta
theory.  Due to the fact that it is much easier to work in the geometric
situation than the arithmetic situation (as there are no problems of
transcendence), we are actually able to prove that the elliptic
double shuffle relations are complete in depth 2, without any recourse 
to conjectures (see Proposition \ref{prop:conj2}).

The first conjecture is the standard conjecture that the double shuffle relations
suffice to generate all the algebraic relations satisfied by multiple zeta values 
\cite{IKZ}.  It can be stated as the vector space isomorphism
$$\nz\cong \ds^{\vee},\leqno{\hbox{\bf Conjecture 1:}}$$
or equivalently as $\overline{\Zc} \cong \U(\ds)^{\vee}$. Knowing the inclusions \eqref{inclusions} and also the existence of the injective
morphism $\grt\subset\ds$ (proved by Furusho in \cite{Furusho:Associators}), Conjecture
1 implies in particular that $\grt\cong \ds$.

The second conjecture essentially comes down to the elliptic analogue of this
statement, $\grt_{ell}\cong \ds_{ell}$.  Let us formulate it more precisely.
Recall that Enriquez showed that there is a canonical surjection
$\grt_{ell}\rightarrow\!\!\!\!\!\rightarrow\grt$.  
(Note that the terminology $\grt_{ell}$ used throughout this article corresponds to the Lie 
algebra denoted $\grt^{ell}_1$ by Enriquez, and our $\grt$ corresponds to his $\grt_1$.)  
The kernel of this surjection
is a normal Lie subalgebra denoted $\mathfrak{r}_{ell}\subset \grt_{ell}$.  Enriquez
shows that $\mathfrak{r}_{ell}$ contains $\mathfrak{sl}_2$ and the derivations $\delta_{2k}$
defined in \eqref{deltas}, and conjectures that $\mathfrak{r}_{ell}$ is actually generated
by these elements (\cite{Enriquez:EllAss}, \S 10.1).

Set $\grt^0_{ell}:=\grt_{ell}\cap \Der_0(\ff_2)$ and $\mathfrak{r}^0_{ell}=
\mathfrak{r}_{ell}\cap \Der_0(\ff_2)$. Then the semi-direct product structure on $\grt_{ell}$ 
yields the
analogous one on the restricted space: $\grt^0_{ell}\cong \mathfrak{r}^0_{ell}\rtimes
\gamma(\grt)$, and Enriquez' conjecture on $\mathfrak{r}_{ell}$ restricts to the conjecture
$\mathfrak{r}^0_{ell}\cong \fu$. Putting these together, we have the conjecture
$\grt^0_{ell}\cong \fu\rtimes\gamma(\grt)\subset \Der_0(\ff_2)$.  
Our second conjecture is the double shuffle analogue of this statement.
$$\fu \rtimes \gamma(\ds) \cong \ds_{ell}\subset \Der_0(\ff_2).\leqno{\hbox{\bf Conjecture 2:}}$$

\vspace{.2cm}
\begin{prop}\label{conjs} If Conjectures 1 and 2 are true, then 
$$\Ec^{\rm geom}\otimes_\Q\overline{\Zc}\cong \U(\ds_{ell})^\vee;$$
in other words, the elliptic double shuffle relations generate a complete family of 
algebraic relations between elliptic multizetas mod $2\pi i$.
\end{prop}

\begin{proof} 
By Conjecture 1, we would have 
$\overline\Zc\cong \U(\ds)^\vee$, so since $\Ec^{\rm geom}\cong \U(\fu)^\vee$
by Theorem \ref{duality}, we would have
$$\Ec^{\rm geom}\otimes_\Q\overline{\Zc}\cong \U(\fu)^\vee\otimes_\Q\U(\ds)^\vee.$$

As recalled in Step 2 of the proof of Theorem \ref{tensprod}, the underlying 
$\Q$-algebra of the graded dual Hopf algebra of the universal enveloping algebra of a
semi-direct product of graded Lie algebras is isomorphic to the tensor product of the
duals of the two Lie algebras, so since $\gamma:\ds\rightarrow\gamma(\ds)$ is
a $\Q$-isomorphism, we have the isomorphism of $\Q$-algebras
$\U(\fu)^\vee\otimes_\Q\U(\ds)^\vee\cong \U\bigl(\fu\rtimes\gamma(\ds)\bigr)^\vee$.
Thus, using conjecture 1 and conjecture 2, we find
\begin{equation}\label{isos}
\Ec^{\rm geom}\otimes_\Q \overline{\Zc}\cong \U(\fu)^\vee\otimes_\Q\U(\ds)^\vee\cong 
\U\bigl(\fu\rtimes\gamma(\ds)\bigr)^\vee\cong \U(\ds_{ell})^\vee.
\end{equation}
This isomorphism means that the algebraic relations satisfied by the elements of
$\overline{\Ec}[2\pi i\tau]$ are generated by the elliptic double shuffle relations. 
\end{proof}

\vspace{.2cm}
\noindent {\it Explicit elliptic double shuffle relations.} 
Let us take a closer look at what the $\Delta$-bialternality properties
are. The first property is that $\mathfrak{e}_m(\tau)$ is $\Delta$-alternal, 
that is, that $\Delta^{-1}(\mathfrak{e}_m(\tau))$ is alternal. However, $\Delta$ 
trivially preserves alternality, so this is equivalent to saying that 
$\mathfrak{e}_m(\tau)$ is alternal, that is, that for each $r>1$,
$$\sum_{u\in sh\bigl((u_1,\ldots,u_k),(u_{k+1},\ldots,u_r)\bigr)} 
\mathfrak{e}_m(\tau)(u)=0\leqno(EDS.1)$$
for $1\le k\le [r/2]$.  This condition is equivalent to the statement
that the power series $\mathfrak{e}(\tau)$ is a Lie series.  

The new relations on $\mathfrak{e}_m(\tau)$ are the second set, which 
say that up to adding on a constant-valued mould, the swap of the mould 
$\Delta^{-1}\bigl(\mathfrak{e}_m(\tau)\bigr)$ is also alternal, 
where the swap-operator is defined in \eqref{swapmould}.
This alternality is given by the equalities for $r>1$
$$\sum_{v\in sh\bigl((v_1,\ldots,v_k),(v_{k+1},\ldots,v_r)\bigr)} swap\bigl(\Delta^{-1}\mathfrak{e}_m(\tau)\bigr)(v)=0\leqno(EDS.2)$$
for $1\le k\le [r/2]$.

The swapped mould is given explicitly by
$$swap\bigl(\Delta^{-1}\mathfrak{e}_m(\tau)\bigr)
=\frac{1}{v_1(v_1-v_2)\cdots(v_{r-1}-v_r)v_r}\, \mathfrak{e}_m(\tau)(v_r,v_{r-1}-v_r, \ldots,v_1-v_2).$$

Thus the alternality conditions in (EDS.2) are all sums of rational 
functions with denominators that are products of terms of the form 
$v_i$ and $(v_i-v_j)$, which sum to zero. Therefore, by multiplying through by 
the common denominator
\begin{equation*}
v_1\cdots v_r\prod_{i>j}(v_i-v_j),
\end{equation*}
the second elliptic shuffle equation can be expressed as a family of 
polynomial conditions on the mould $swap(\mathfrak{e}_m(\tau))$.  

\vspace{.2cm}
\noindent {\it Elliptic double shuffle relations in depth 2.} Let us
work this out explicitly in depth 2.
The usual alternality condition reduces to 
$$\mathfrak{e}_m(\tau)(u_1,u_2)+\mathfrak{e}_m(\tau)(u_2,u_1)=0.\leqno(EDS.1\hbox{-}{\rm depth}\ 2)$$
The swap alternality condition reads
\begin{equation*}
\frac{1}{v_1(v_1-v_2)v_2}swap(\mathfrak{e}_m(\tau))(v_1,v_2)+
\frac{1}{v_1(v_2-v_1)v_2}swap(\mathfrak{e}_m(\tau))(v_2,v_1)=0,
\end{equation*}
which, clearing denominators, reduces simply to
$$swap(\mathfrak{e}_m(\tau))(v_1,v_2)-swap(\mathfrak{e}_m(\tau))(v_2,v_1)=0.$$
Since $swap(\mathfrak{e}_m(\tau))(v_1,v_2)=e_m(v_2, v_1-v_2)$, this is given by the relation
$$\mathfrak{e}_m(\tau)(v_2,v_1-v_2)=\mathfrak{e}_m(\tau)(v_1,v_2-v_1)$$
directly on $\mathfrak{e}_m(\tau)$.  Applying the depth 2 swap operator from
$\overline{ARI}$ to $ARI$ (given by $v_1\mapsto u_1+u_2$, $v_2\mapsto u_1$),
we transform this relation into
$$\mathfrak{e}_m(\tau)(u_1,u_2)=\mathfrak{e}_m(\tau)(u_1+u_2,-u_2).$$
Finally, $\mathfrak{e}_m(\tau)$ is of odd degree, so by the depth 2 version
of (EDS.1), we have
$\mathfrak{e}_m(\tau)(-u_2,-u_1)=\mathfrak{e}_m(\tau)(u_1,u_2)$, which gives
$$\mathfrak{e}_m(\tau)(u_1,u_2)=\mathfrak{e}_m(\tau)(u_2,-u_1-u_2).\leqno(EDS.2\hbox{-}{\rm depth}\ 2)$$

Note that this is nothing other than
$\mathfrak{e}_m(\tau)(u_1,u_2)=push\bigl(\mathfrak{e}_m(\tau)\bigr)(u_1,u_2)$
where the push-operator is defined in \eqref{pushmould}.  Thus in depth 2,
the $\Delta$-bialternality conditions correspond to alternality and
push-invariance of $\mathfrak{e}_m(\tau)$ (which in turn correspond
to the fact that $\mathfrak{e}(\tau)$ is a Lie series that is push-invariant
in depth 2 in the sense of power series, as in \eqref{pushop}).  This
simple reformulation is special to depth 2; the $\Delta$-bialternal
property does not lend itself so easily to a direct expression as
a property of $\mathfrak{e}(\tau)$ in higher depths.

We end this subsection by showing that the conjecture that the
$\Delta$-bialternal relations are sufficient holds in depth 2.

\begin{prop}\label{prop:conj2} The relations (EDS.1) and (EDS.2) in odd degrees
are the only relations satisfied by $\mathfrak{e}_m(\tau)$ in depth 2.
\end{prop}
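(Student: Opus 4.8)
The plan is to recast the proposition as a linear-independence statement and then to split it into an arithmetic and a geometric contribution by means of the decomposition $\mathfrak{e}(\tau)=\mathfrak{e}+r_a(\tau)+s(\tau)$ obtained in the proof of Theorem~\ref{tensprod}. First I would compute the $\Q$-dimension of the space $\Pc_d$ of depth-$2$, homogeneous degree-$d$ polynomial moulds satisfying (EDS.1) and (EDS.2). As already noted, in depth $2$ these two conditions say precisely that the mould transforms by the sign character of the symmetric group $S_3$ permuting $u_1,u_2,u_3:=-u_1-u_2$: antisymmetry (EDS.1) is the sign on the transposition $u_1\leftrightarrow u_2$, while push-invariance (EDS.2) is invariance under the $3$-cycle. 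Hence $\Pc_d$ is the space of $S_3$-alternating polynomials of degree $d$, namely the Vandermonde $(u_1-u_2)(u_2-u_3)(u_3-u_1)$ times the ring of symmetric polynomials $\Q[e_2,e_3]$ in the elementary symmetric functions of $u_1,u_2,u_3$ (with $e_1=u_1+u_2+u_3=0$), so that $\dim_\Q\Pc_d=\#\{(p,q)\in\Z_{\ge 0}^2 : 2p+3q=d-3\}=\dim M_{2d-6}$.

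Since $\mathfrak{e}_m(\tau)$ is $\Delta$-bialternal by Theorem~\ref{ellipticds}, its depth-$2$ degree-$d$ part lies in $\Pc_d\otimes_\Q\overline\Ec[2\pi i\tau]$; after imposing (EDS.1) and (EDS.2) there remain exactly $\dim_\Q\Pc_d$ coordinate functions in $\overline\Ec[2\pi i\tau]$, and the proposition is equivalent to the assertion that these coordinates are $\Q$-linearly independent. To exhibit enough independent coordinates I would use the depth-$2$ truncation $\mathfrak{e}(\tau)|_2=\mathfrak{e}|_2+r_a(\tau)|_2+\tfrac12\langle r_a(\tau)|_1,\mathfrak{e}|_1\rangle$, the only surviving Campbell--Hausdorff bracket being that of two depth-$1$ terms. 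Under the canonical algebra isomorphism $\overline\Ec[2\pi i\tau]\cong\Ec^{\rm geom}\otimes_\Q\overline\Zc$, the coordinates arising from $r_a(\tau)|_2$ lie in $\Ec^{\rm geom}_0\otimes 1$, those arising from $\mathfrak{e}|_2=\gamma_a(\phi_{KZ})|_2$ lie in $1\otimes\overline\Zc$, and the bracket term contributes only decomposables in $\Ec^{\rm geom}_{>0}\otimes\overline\Zc_{>0}$.

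Because the two factors are genuinely tensored, any $\Q$-linear relation among all the coordinates splits into a relation among the geometric coordinates, a relation among the arithmetic coordinates, and a relation among the decomposable terms; the latter two families involve strictly lower-weight generators in each factor and so cannot cancel the new generators. The geometric coordinates are the coefficients $r_i$ of a basis $\{v_i\}$ of $V'_2=v_a(\fu'_2)$, which are $\Q$-linearly independent because $v_a$ is injective on $\fu'$ (Proposition~\ref{prop:evalinjective}) and the iterated Eisenstein integrals are independent (Corollary~\ref{cor:Eis}); the arithmetic coordinates are the images under the injective map $\gamma_a$ of the depth-$2$ part of $\phi_{KZ}$, whose independence reduces to the classical fact that the corresponding weight-$(d+2)$ depth-$2$ multiple zeta values are $\Q$-independent modulo products and lower depth (the Ihara--Kaneko--Zagier period-polynomial description of depth-$2$ double shuffle). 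Thus the two families are jointly independent.

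The main obstacle is the dimension bookkeeping: one must check that these two families together exhaust all $\dim_\Q\Pc_d$ coordinates, i.e.~that the depth-$2$ dimensions of $\fu'$ and of $\gamma_a(\grt)$ sum to $\dim M_{2d-6}$ in every odd degree $d$. This is exactly the depth-$2$ instance of the semidirect-product decomposition $\ds_{\rm ell}\cong\fu\rtimes\gamma(\ds)$ of Conjecture~2, and, unlike the general case, it can be verified unconditionally here by an explicit mould computation: one decomposes the alternating polynomials $(u_1-u_2)(u_2-u_3)(u_3-u_1)\,e_2^{\,p}e_3^{\,q}$ into an Eisenstein part matching $ma(v_a(\fu'_2))$, after accounting for Pollack's quadratic relations among the $\weps_{2k}$, and a cuspidal part matching the depth-$2$ period-polynomial contributions to $\gamma_a(\grt)$. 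Reconciling these three a priori different counts---alternating polynomials, Pollack-reduced Eisenstein brackets, and depth-$2$ period polynomials---is where the real work lies; once this equality of dimensions is in place, the independence inputs recorded above (Corollary~\ref{cor:Eis} together with the tensor decomposition of Theorem~\ref{tensprod}) complete the argument.
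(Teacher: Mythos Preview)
Your proposal has a genuine gap stemming from a misconception about the arithmetic contribution in depth $2$. You decompose $\mathfrak{e}(\tau)|_2$ into a geometric piece $r_a(\tau)|_2$ and an arithmetic piece $\mathfrak{e}|_2=\gamma_a(\phi_{KZ})|_2$ (plus a bracket term), and then set out to show that the depth-$2$ dimensions of $\fu'$ and of $\gamma_a(\grt)$ \emph{sum} to $\dim M_{2d-6}$. But the key fact the paper uses (and which follows from Enriquez' explicit description of the section $\gamma$) is that $\gamma(\nz^\vee)$, and hence $\gamma_a(\grt)$, has depth $\geq 3$ in $\ff_2$. Thus $\mathfrak{e}|_2=0$, $\mathfrak{e}|_1=0$, the bracket term $\tfrac12\langle r_a(\tau)|_1,\mathfrak{e}|_1\rangle$ vanishes, and $\mathfrak{e}(\tau)|_2=r_a(\tau)|_2$ is \emph{purely geometric}. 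Your ``cuspidal part matching the depth-$2$ period-polynomial contributions to $\gamma_a(\grt)$'' is therefore empty; the period polynomials show up only as Pollack's relations \emph{inside} $\fu'_2$, not as new arithmetic elements filling the gap.

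Even setting this aside, your treatment of the arithmetic coordinates has a second, independent problem: you invoke ``the classical fact that the corresponding weight-$(d+2)$ depth-$2$ multiple zeta values are $\Q$-independent modulo products and lower depth.'' This is not a classical fact but an open transcendence conjecture---essentially the depth-$2$ case of Conjecture~1. Since the whole point of Proposition~\ref{prop:conj2} is to establish depth $2$ \emph{unconditionally}, this step would nullify the result.

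The paper's proof is much shorter. It observes that since $\gamma(\nz^\vee)$ has depth $\geq 3$, the depth-$2$ part of $\ds_{\rm ell}$ equals $V^2=v_a(\fu)^2$; Pollack's count of relations among the $[\eps_{2j},\eps_{2k}](a)$ then gives $\dim V_n^2=\lfloor (n-5)/6\rfloor+1=\dim(\ff_2^{\rm push})_n^2$, so $V^2=(\ff_2^{\rm push})^2=(\ds_{\rm ell})^2$ and (EDS.1)+(EDS.2) are all the relations. Your $S_3$-alternating description and the identification $\dim\Pc_d=\dim M_{2d-6}$ are correct and pleasant, but the splitting into Eisenstein and cuspidal parts should be read as the splitting of $V^2$ itself (brackets modulo Pollack relations), not as a geometric/arithmetic dichotomy.
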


\begin{proof} We can prove this result without recourse to any conjectures,
essentially because depth 2 is too small to contain any of the arithmetic part 
of $\mathfrak{e}_m(\tau)$ (we qualify this statement below), and the geometric part $V=v_a(\fu)$ is well-understood in depth two.  
We know that $\mathfrak{e}(\tau)\in \ds_{ell}\subset \ff_2^{push}$.
The graded dimensions of $\ff_2$ in depth 2 are given by
\begin{equation}\label{dimliepush}
dim(\ff_2^{\rm push})_n^2=\left\lfloor\frac{n-5}{6}\right\rfloor+1.
\end{equation}
Now the depth two part of $\ds_{ell} \supset V \rtimes \gamma(\ds)$ is contained in the depth two part of $V$, since $\gamma(\ds)$ is of depth $\geq 3$. Thus
\begin{equation}\label{dimensions}
{\rm dim}\,\bigl(\ds_{ell}\bigr)_n^2={\rm dim}\,V_n^2=\begin{cases}\left\lfloor\frac{n-5}{6}\right\rfloor+1&\hbox{if $n$ is odd $\ge 5$}\\
0&\hbox{otherwise}.  \end{cases}
\end{equation}
Indeed, the last equality follows from the fact that in depth 2, $V$ is spanned by the 
$[\eps_{2j},\eps_{2k}](a)$ with $j<k$, $j,k\ne 1$, which are all of
odd weight, and the fact that, as shown in \cite{Pollack:Thesis}, the only relations between these $\left\lfloor\frac{n-3}{4}\right\rfloor$ brackets come from period polynomials,
whose number is given by $\left\lfloor\frac{n-7}{4}\right\rfloor-\left\lfloor\frac{n-5}{6}\right\rfloor$.
Thus $V^2=\ds_{ell}^2=(\ff_2^{push})^2$, so the Lie relation (EDS.1) and
the push-invariance relation (EDS.2) suffice to characterize elements
of $\ds_{ell}$ in depth 2.  \end{proof}

\noindent {\bf Depth 2 elements of $\ds_{ell}$ in low weights:}

\vspace{.2cm}
$\bullet$ in weight $5$, 
$$ma\bigl([\eps_0,\eps_4](a)\bigr)=2u_1^3+3u_1^2u_2-3u_1u_2^2-2u_2^3.$$

\vspace{.2cm}
$\bullet$ in weight $7$, 
$$ma\bigl([\eps_0,\eps_6](a)\bigr)=2u_1^5+5u_1^4u_2+2u_1^3u_2^2-2u_1^2u_2^3-5u_1u_2^4-2u_2^5.$$

\vspace{.2cm}
$\bullet$ in weight $9$,
$$ma\bigl([\eps_0,\eps_8](a)\bigr)=2u_1^7+7u_1^6u_2+9u_1^5u_2^2+5u_1^4u_2^3-5u_1^3u_2^4-9u_1^2u_2^5-7u_1u_2^6 2u_2^7.$$

\vspace{.2cm}
$\bullet$ in weight $11$, 
$$ma\bigl([\eps_0,\eps_{10}](a)\bigr)=8u_1^9+36u_1^8u_2+74u_1^7u_2^2+91u_1^6u_2^3+41u_1^5u_2^4-41u_1^4u_2^5$$ $$-91u_1^3u_2^6-74u_1^2u_2^7-36u_1u_2^8-8u_2^9$$
$$\ \ ma\bigl([\eps_4,\eps_6](a)\bigr)=-2u_1^7u_2^2-7u_1^6u_2^3-5u_1^5u_2^4
+5u_1^4u_2^5+7u_1^3u_2^6+2u_1^2u_2^7.$$

\subsection{The elliptic associator and the push-neutrality relations mod $2\pi i$}

\begin{dfn} Let $\mathfrak{a}$ be the power series
with coefficients in $\overline\Zc$ given by
$$\mathfrak{a}=\frac{1}{2\pi i}\log(A)\ {\rm mod}\ 2\pi i=
\log(\overline{A'})= \overline\Phi_{\rm KZ}(t_{01},t_{12})
t_{01}\overline\Phi_{\rm KZ}(t_{01},t_{12})^{-1},$$
and let $\mathfrak{a}(\tau)=g(\tau)\cdot \mathfrak{a}$.
\end{dfn} 

The coefficients of $\mathfrak{a}(\tau)$ generate the $\Q$-algebra 
$\overline{\Ac}$ of
$\overline{A}$-EMZs. In this paragraph we will consider certain
relations satisfied by the coefficients of $\mathfrak{a}(\tau)$,
different from the linearized elliptic double shuffle relations satisfied
by $\mathfrak{e}(\tau)$.  The first family of relations on the coefficients of
$\mathfrak{a}(\tau)$ is the usual family of {\it alternality} relations, 
but the second is the family of {\it push-neutrality} relations.  These 
relations are related (mod $2\pi i$) to the {\it Fay-shuffle relations} 
introduced in \cite{Matthes:Edzv}, and studied explicitly in depth 2.  
We show that in depth 2, the push-neutrality relations are identical to
the Fay-shuffle relations.  We also show that even in depth 2 and mod 
$2\pi i$, the alternality and push-neutrality relations are strictly weaker 
than the linearized elliptic double shuffle relations.

We will give our relations in terms of mould theory (but see
Corollary \ref{pushneut} for a translation into power series terms at the end).
For this we recall the $push$ and $dar$-operators defined in
\eqref{pushmould} and \eqref{darmould}. We will say that a mould $B$ is 
{\it push-neutral} if 
\begin{equation}\label{pushneutrality}
B(u_1,\ldots,u_r)+push(B)(u_1,\ldots,u_r)+\cdots+push^r(B)(u_1,\ldots,u_r)=0
\end{equation}
for all $r\ge 1$, where $push$ denotes the push-operator on moulds defined in 
\eqref{pushmould}.

\begin{thm}\label{FS} Let $\mathfrak{a}_m(\tau)=ma\bigl(\mathfrak{a}(\tau)\bigr)$.
Then $\mathfrak{a}_m(\tau)$ is alternal and $dar^{-1}\bigl(\mathfrak{a}_m(\tau)\bigr)$ is push-neutral in depth $r>1$.
\end{thm}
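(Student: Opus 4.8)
The plan is to establish the two claimed symmetries of $\mathfrak{a}_m(\tau)$ separately, tracing each back through the mould dictionary $ma$ to a structural property of the power series $\mathfrak{a}(\tau)=g(\tau)\cdot\mathfrak{a}$. For the alternality statement, I would invoke the already-recorded fact that $ma$ carries Lie series to alternal moulds: since $\mathfrak{a}=\overline\Phi_{KZ}(t_{01},t_{12})\,t_{01}\,\overline\Phi_{KZ}(t_{01},t_{12})^{-1}$ is a conjugate of the Lie element $t_{01}$, it is itself Lie-like in $\ff_2$, and $g(\tau)$, being an automorphism of $\ff_2\otimes_\Q(\Ec^{\rm geom}\otimes_\Q\overline\Zc)$, preserves the property of being a Lie series. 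Hence $\mathfrak{a}(\tau)\in\ff_2$ is a Lie series, so by the criterion recalled in \S 4.1 the mould $\mathfrak{a}_m(\tau)=ma(\mathfrak{a}(\tau))$ is alternal. This half should be essentially immediate once the Lie-likeness of $\mathfrak{a}$ is observed.

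**The substance is the push-neutrality** of $dar^{-1}(\mathfrak{a}_m(\tau))$ in depths $r>1$. Here the key is to locate a \emph{push-invariance} property on the power-series side and translate it into \emph{push-neutrality} under the operators $dar$ and $push$ on moulds. Recall from \S 3.2 that $v_a$ maps the Lie algebra generated by $\Der^0(\ff_2)\setminus\Q\varepsilon_0$ isomorphically onto the push-invariant Lie series $\ff_2^{\rm push}$, and that both $\mathfrak{e}(\tau)$ and (by the same token) the relevant evaluations live in this space. The natural route is to exhibit $\mathfrak{a}(\tau)$, or a suitable modification of it, as the $v_a$-image of a derivation in $\fu$ (indeed $g(\tau)\cdot\mathfrak{a}$ is built from the same $g(\tau)=\exp(r(\tau))$ with $r(\tau)\in\widehat\fu$), so that $\mathfrak{a}(\tau)$ is push-invariant as a power series in the sense of \eqref{pushop}. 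The main work is then the purely mould-theoretic comparison: one must show that the power-series push-invariance of $p$ corresponds, after applying $ma$ and then $dar^{-1}$, precisely to the mould push-neutrality relation \eqref{pushneutrality}. This rests on the interplay between the monomial push-operator \eqref{pushop} and the mould push-operator \eqref{pushmould}; the factor $u_1\cdots u_r$ introduced by $dar$ is exactly what converts invariance under a single cyclic shift into the vanishing of the sum $B+push(B)+\cdots+push^r(B)$.

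**The step I expect to be the main obstacle** is this last translation, i.e.\ verifying that under $ma$ the power-series push on monomials $c_{i_1}\cdots c_{i_r}$ (cyclic permutation of the $a$-blocks, \eqref{pushop}) matches the mould push $B(u_1,\ldots,u_r)\mapsto B(u_2,\ldots,u_r,-u_1-\cdots-u_r)$ of \eqref{pushmould} only after the $dar$-twist, and that push-\emph{invariance} of the series becomes push-\emph{neutrality} (a summed identity) of the twisted mould. The restriction to depths $r>1$ is the sign that depth $1$ must be excised, consistent with the $\varepsilon_0$-direction $\Q\varepsilon_0$ being removed in the description of $v_a(\ff_2^{\rm push})$; I would handle this by working modulo the depth-$1$ part throughout. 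Concretely I would (i) record that $\mathfrak{a}(\tau)=g(\tau)\cdot\mathfrak{a}\in\ff_2^{\rm push}$ up to the depth-one term, using that $g(\tau)$ arises from $\fu\subset\Der^0(\ff_2)$ and that $v_a$ lands in $\ff_2^{\rm push}$; (ii) translate push-invariance of a power series through $ma$ into a cyclic identity on moulds; and (iii) apply $dar^{-1}$ and rearrange into the form \eqref{pushneutrality}, checking the conversion of invariance into a vanishing sum. Once (ii)--(iii) are nailed down as a general lemma about $ma$, $dar$ and the two push-operators, the theorem follows by applying it to $\mathfrak{a}(\tau)$.
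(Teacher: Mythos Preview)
Your treatment of alternality is correct and matches the paper: $\mathfrak{a}$ is a conjugate of the Lie element $t_{01}$, hence Lie-like, and $g(\tau)$ preserves this, so $\mathfrak{a}_m(\tau)$ is alternal.

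The push-neutrality argument, however, has a genuine gap. Your strategy is to show that $\mathfrak{a}(\tau)$ is \emph{push-invariant} as a power series (by exhibiting it as $v_a$ of a derivation in $\Der^0(\ff_2)$), and then to use the implication ``push-invariant $\Rightarrow$ $dar^{-1}$ push-neutral''. But $\mathfrak{a}(\tau)$ is \emph{not} push-invariant. Already the depth-$2$ part of $t_{01}$, namely $-\tfrac{1}{12}[b,[b,a]]=-\tfrac{1}{12}(b^2a-2bab+ab^2)$, fails: applying the power-series push \eqref{pushop} gives $ab^2-2b^2a+bab$, which is different. Since $\overline\Phi_{KZ}=1+O(\text{degree }3)$, this depth-$2$, weight-$3$ term survives unchanged in $\mathfrak{a}$ and in $\mathfrak{a}(\tau)$. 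The confusion is between $\mathfrak{a}(\tau)=g(\tau)\cdot\overline\sigma(t_{01})$ and $\mathfrak{e}(\tau)$: the latter \emph{is} of the form $v_a(D)$ and hence push-invariant, but $\mathfrak{a}(\tau)$ is an automorphism applied to $t_{01}$, not a derivation evaluated at $a$. The implication ``push-invariant $\Rightarrow$ $dar^{-1}$ push-neutral'' is one-directional; the target property is strictly weaker, and $\mathfrak{a}_m(\tau)$ only enjoys the weaker one. (This is exactly the point of \S4.3, where the dimension counts show the push-neutrality space is larger than the $\Delta$-bialternal space.)

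The paper's route is different and does not pass through push-invariance of $\mathfrak{a}(\tau)$. It proceeds in three steps. First, it replaces $t_{01}$ by $t'_{01}=t_{01}+\tfrac12 t_{12}$ (harmless since $\gamma(\phi_{KZ})$ kills $t_{12}$), and verifies by an explicit Bernoulli computation (Proposition~\ref{t01pushneutral}) that $ma([t'_{01},a])$---equivalently $dar^{-1}T'_{01}$ up to a push-invariant factor---is push-neutral in each depth $r>1$. Second, it proves a preservation lemma (Lemma~\ref{aratpushneutral}): if $dar^{-1}A$ is push-neutral, then so is $dar^{-1}\cdot\Darit(P)\cdot A$ for any $P\in ARI$. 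Third, it writes $\mathfrak{a}_m=\Darit(\mathfrak{z})\cdot T'_{01}$ and $\mathfrak{a}_m(\tau)=\sum_n\tfrac1{n!}\Darit(r_m(\tau))^n\cdot\mathfrak{a}_m$, and iterates the lemma. So the missing ingredient in your plan is precisely this preservation-under-$\Darit$ lemma (a nontrivial mould computation carried out in \S4.4), together with the seed computation for $t'_{01}$; neither can be replaced by a push-invariance claim on $\mathfrak{a}(\tau)$ itself.
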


\begin{proof} Recall the derivation $arat$ defined in \eqref{aratdef}.  For
any $P\in ARI$, set
\begin{equation}\label{Daritdef}
\Darit(P)=dar\circ arat\bigl(\Delta^{-1}(P)\bigr)\circ dar^{-1}.
\end{equation}
It is shown in \cite{Schneps:Emzv}, Lemma 3.1.2, that the map
\begin{align}
\Der_0(\ff_2)&\hookrightarrow \Der(ARI_{lu})\notag\\
D&\mapsto \Darit\bigl(ma(v_a(D))\bigr)\label{Daritmap}
\end{align}
is an injective Lie morphism, so that we have
\begin{equation}\label{mader}
ma\bigl(D(f)\bigr)=\Darit\bigl(ma(v_a(D))\bigr)\cdot ma(f).
\end{equation}

Let $\mathfrak{a}_m=ma(\mathfrak{a})$, $\mathfrak{a}_m(\tau)=
ma\bigl(\mathfrak{a}(\tau)\bigr)$, and $r_m(\tau)=ma\bigl(r_a(\tau)\bigr)$. 
Under the map \eqref{Daritmap}, 
we have $r(\tau)\mapsto \Darit\bigl(r_m(\tau)\bigr)$, so
$$ma\bigl(r(\tau)\cdot \mathfrak{a}\bigr)=\Darit\bigl(r_m(\tau)\bigr)
\cdot \mathfrak{a}_m.$$
Since
\begin{equation}\label{gtaum}
\mathfrak{a}(\tau)=g(\tau)\cdot \mathfrak{a}=\sum_{n\ge 0} \frac{1}{n!}r(\tau)^n\cdot \mathfrak{a},
\end{equation}
we have
\begin{equation}\label{gtaum2}
\mathfrak{a}_m(\tau) = \sum_{n\ge 0}\frac{1}{n!}\Darit\bigl(r_m(\tau)\bigr)^n\cdot \mathfrak{a}_m.
\end{equation}

Let $\overline\sigma$ denote the automorphism of 
$\ff_2$ defined in \S 3.2. We have
$$\mathfrak{a}=\overline\sigma(t_{01}).$$
Recall from \S 3.2 that $\overline\sigma=\gamma(\phi_{\rm KZ})$,
where $\phi_{\rm KZ}=\log_a\bigl(\overline\Phi_{\rm KZ}\bigr)$.

The derivation $\gamma(\phi_{\rm KZ})$ lies in $\Der_0(\ff_2)$, so
$\gamma(\phi_{\rm KZ})\cdot t_{01}\in \ff_2$; thus $\mathfrak{a}$ is
a Lie series.  Since $r(\tau)\in \Der_0(\ff_2)$, we have
$r(\tau)^n\cdot \mathfrak{a}\in \ff_2$ for all $n\ge 1$, so by
\eqref{gtaum}, $\mathfrak{a}(\tau)=
g(\tau)\cdot \mathfrak{a}\in\ff_2$, which means that
$\mathfrak{a}_m(\tau)$ is alternal.  This settles the first property
of $\mathfrak{a}_m(\tau)$ stated in the theorem.

Let us consider the second property.
Since $\gamma(\phi_{\rm KZ})\in \Der_0(\ff_2)$, it annihilates $t_{12}$.  
Therefore, setting
$t'_{01}=t_{01}+\frac{1}{2}t_{12}$, we have
\begin{equation}\label{equ1}
\mathfrak{a}=\gamma(\phi_{\rm KZ})\cdot t_{01}=\gamma(\phi_{\rm KZ})\cdot t'_{01}.
\end{equation}

Set $T'_{01}=ma(t'_{01})$, and set
$$\mathfrak{z}=ma\Bigl(v_a\bigl(\gamma(\phi_{\rm KZ})\bigr)\bigr)=
ma\bigl(\gamma_a(\phi_{\rm KZ})\bigr).$$
Then by \eqref{mader}, the equality \eqref{equ1}
translates into moulds as
$$\mathfrak{a}_m=\Darit(\mathfrak{z})\cdot T'_{01}.$$

To complete the proof of the second property, we will use the following lemma, whose proof is deferred to the final subsection of this paper.

\begin{lem}\label{aratpushneutral} Let $A\in ARI$. If $A$ is push-neutral, then $arat(P)\cdot A$ is push-neutral
for all $P\in ARI$.  If $dar^{-1}A$ is push-neutral, then
$dar^{-1}\cdot \Darit(P)\cdot A$ is push-neutral for all $P\in ARI$.
\end{lem}

It is easy to see that if $A$ is a push-invariant mould, then
$dar^{-1}A$ is push-neutral, since
\begin{equation}
\begin{aligned}
&\sum_{s=0}^rpush^s\bigl(dar^{-1}(A)\bigr)(u_1,\ldots,u_r)\\
&=\biggl(\frac{1}{u_1\cdots u_r}+\frac{1}{u_2\cdots u_0}+\cdots
+\frac{1}{u_0u_1\cdots u_{r-1}}\biggr)A(u_1,\ldots,u_r)\\
&=\biggl(\frac{u_0+u_1+\cdots+u_r}{u_0u_1\cdots u_r}\biggr)A(u_1,\ldots,u_r)\\
&=0,
\end{aligned}
\end{equation}
where $u_0=-u_1-\cdots-u_r$. By Proposition \ref{t01pushneutral} below, $dar^{-1}T'_{01}$ is push-neutral
and by Lemma \ref{aratpushneutral}, so is 
$$dar^{-1}\mathfrak{a}_m=dar^{-1}\cdot \Darit(\mathfrak{z})\cdot T'_{01}.$$

To show that $dar^{-1}\mathfrak{a}_m(\tau)$ is push-neutral we use the
same lemma again. Since $dar^{-1}\mathfrak{a}_m$ is push-neutral,
so is $dar^{-1}\cdot \Darit\bigl(r_m(\tau)\bigr)\cdot \mathfrak{a}_m$,
and then successively, so is
$dar^{-1}\cdot \Darit\bigl(r_m(\tau)\bigr)^n\cdot \mathfrak{a}_m$
for all $n\ge 1$. Thus $dar^{-1}\mathfrak{a}_m(\tau)$ is push-neutral
by \eqref{gtaum2}. This proves the theorem.\end{proof}

The following proposition was used in the proof of Theorem \ref{FS}.
\begin{prop}\label{t01pushneutral}
The mould
\begin{equation}
ma([t'_{01},a])=-\sum_{n=2}^{\infty}\frac{B_n}{n!}ma([\ad^n(b)(a),a])
\end{equation}
is push-neutral.
\end{prop}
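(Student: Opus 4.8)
The plan is to reduce the statement to an explicit finite computation in each depth, exploiting that push-neutrality is a condition depth by depth and that only even depths carry content. Since $B_n=0$ for odd $n\geq 3$, and since the part of $t'_{01}$ lying in the free Lie algebra on the $c_i$ begins in depth $2$, the depth-$r$ component of $ma([t'_{01},a])$ vanishes unless $r$ is even; for those depths push-neutrality is automatic, so it suffices to treat even $r\geq 2$. By the (given) expansion, the depth-$r$ component is the scalar $-B_r/r!$ times $ma([\ad^r(b)(a),a])$, so everything comes down to one mould per even depth.

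First I would record the translation identity: for any $f\in\Q\langle\!\langle C\rangle\!\rangle$ of depth $r$ with $[f,a]\in\Q\langle\!\langle C\rangle\!\rangle$, one has $ma([f,a])(u_1,\ldots,u_r)=(u_1+\cdots+u_r)\,ma(f)(u_1,\ldots,u_r)$. This follows by moving $a$ to the right via $a\,c_i=c_i\,a+c_{i+1}$, which on a monomial $f=c_{k_1}\cdots c_{k_r}$ gives $[f,a]=-\sum_j c_{k_1}\cdots c_{k_j+1}\cdots c_{k_r}$; reading off the defining formula \eqref{ma} for $ma$, the index shift $k_j\mapsto k_j+1$ contributes exactly the factor $u_j$, so summing over $j$ produces $u_1+\cdots+u_r$. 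Applying this with $f=\ad^r(b)(a)$ reduces the depth-$r$ part to a multiple of $P_r:=(u_1+\cdots+u_r)\,M_r$, where $M_r:=ma(\ad^r(b)(a))$.

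Next I would isolate the two structural features of $M_r$ that force the cancellation. Counting weight and depth, $\ad^r(b)(a)$ has $a,b$-degree $r+1$ and depth $r$, so every monomial of $M_r$ has $u$-degree $(r+1)-r=1$: thus $M_r=\sum_{j=1}^r a_{r,j}u_j$ is a linear form. Moreover $\ad^r(b)(a)$ is a homogeneous Lie element of depth $r$ in the free Lie algebra on the $c_i$, on which the reversal anti-automorphism acts by $(-1)^{r-1}$; since word reversal in the $c_i$ corresponds under $ma$ to the variable reversal $u_j\leftrightarrow u_{r+1-j}$ (with no extra sign), this yields $M_r(u_r,\ldots,u_1)=(-1)^{r-1}M_r(u_1,\ldots,u_r)$. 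For even $r$ this is anti-palindromy: $a_{r,j}=-a_{r,r+1-j}$. (Alternatively one gets $a_{r,j}=(-1)^{r-j}\binom{r-1}{j-1}$ directly from the recursion $M_r(u_1,\ldots,u_r)=M_{r-1}(u_2,\ldots,u_r)-M_{r-1}(u_1,\ldots,u_{r-1})$ coming from $\ad^r(b)(a)=[b,\ad^{r-1}(b)(a)]$, and anti-palindromy for even $r$ is read off from the binomials.) With $u_0:=-u_1-\cdots-u_r$, the push-operator satisfies $push^i(P_r)(u_1,\ldots,u_r)=P_r(u_{i+1},\ldots,u_{i+r})$, indices modulo $r+1$, and the argument-sum of the shifted tuple is $-u_i$; hence $push^i(P_r)=(-u_i)\sum_j a_{r,j}u_{i+j}$ and $\sum_{i=0}^r push^i(P_r)=-\sum_{i,j}a_{r,j}\,u_i u_{i+j}$. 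Collecting the coefficient of a fixed product $u_m u_n$ with $m\neq n$, the two contributions arise from $j=(n-m)$ and $j=(m-n)$ modulo $r+1$, whose values sum to $r+1$, so the coefficient is $-\bigl(a_{r,j}+a_{r,r+1-j}\bigr)=0$ by anti-palindromy; there are no diagonal terms, so $\sum_{i=0}^r push^i(P_r)=0$.

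The conceptual heart — and the step that is not merely mechanical — is the reduction in the third paragraph: recognizing that push-neutrality of $(u_1+\cdots+u_r)M_r$ is precisely the anti-palindromy of the linear form $M_r$, and that this anti-palindromy holds exactly in the even depths, which are exactly the depths where $B_r\neq 0$. This parity coincidence is what makes the proposition true; once it is identified, the identity $ma([f,a])=(u_1+\cdots+u_r)\,ma(f)$ and the pairwise cancellation are routine.
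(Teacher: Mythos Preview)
Your proof is correct and follows essentially the same route as the paper: reduce to the individual moulds $f_r=ma([\ad^r(b)(a),a])$, write them as $(u_1+\cdots+u_r)M_r$ with $M_r$ the linear form $\sum_j(-1)^{r-j}\binom{r-1}{j-1}u_j$, and sum over the push-orbit. Your argument is in fact sharper than the paper's on two points. First, the paper asserts push-neutrality of $f_n$ for \emph{all} $n\ge 2$, but this fails for odd $n$ (already $f_3$ is not push-neutral, as one checks directly); your observation that $B_n=0$ for odd $n\ge 3$ disposes of those depths at the outset and is the correct way to proceed. Second, the mechanism you isolate---anti-palindromy $a_{r,j}=-a_{r,r+1-j}$ of $M_r$ for even $r$, forcing pairwise cancellation of the coefficients of $u_mu_n$ under the pairing $j\leftrightarrow r+1-j$---is exactly what makes the push-sum vanish; the paper invokes only the alternating sum $\sum_k(-1)^{n-k}\binom{n-1}{k-1}=0$, which is weaker and does not by itself yield the conclusion (and its index bookkeeping, ``mod $n$'' and the sum $\sum_{i=0}^{n-1}$, should read ``mod $n+1$'' and $\sum_{i=0}^{n}$). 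Your conceptual remark that the parity responsible for anti-palindromy coincides with the parity where $B_r\neq 0$ is the genuine reason the proposition holds.
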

\begin{proof}
It is enough to show the push-neutrality of $f_n:=ma([\ad^n(b)(a),a])$ for all $n \geq 2$ separately. Using the definition of $ma$ (cf. Section \ref{ssec:moulds}), we see that
\begin{equation}
ma(\ad^n(b)(a))=-\sum_{k=1}^n(-1)^{n-k}\binom{n-1}{k-1}u_k \in \Q[u_1,\ldots,u_n].
\end{equation}
Now in depth $n$, the operator $\ad(a)$ on $\Q\langle\!\langle C\rangle\!\rangle$ corresponds to multiplication by $-(u_1+\ldots+u_n)$. Consequently,
\begin{align}
ma([\ad^n(b)(a),a])&=-ma([a,\ad^n(b)(a)])\notag\\
&=-(u_1+\ldots+u_n)\sum_{k=1}^n(-1)^{n-k}\binom{n-1}{k-1}u_k\notag\\
&=-\sum_{j,k=1}^n(-1)^{n-k}\binom{n-1}{k-1}u_ju_k.
\end{align}
On the other hand, by the definition of the push-operator \eqref{pushmould}, we have $push(f_n)=-\sum_{j,k=1}^n(-1)^{n-k}\binom{n-1}{k-1}u_{j+1}u_{k+1}$, where the indices are to be taken mod $n$ (so that $u_{k+n}=u_k$). Using the elementary fact that $\sum_{k=1}^n(-1)^{n-k}\binom{n-1}{k-1}=0$ for $n \geq 2$, it is now clear that
\begin{equation}
\sum_{i=0}^{n-1}push^i(f_n)=0,
\end{equation}
that is, $f_n$ is push-neutral for all $n \geq 2$, as was to be shown.
\end{proof}

\vspace{.2cm}
We end this subsection by studying these relations more explicitly in
depth 2 and comparing them with the elliptic double shuffle relations
on $\mathfrak{e}_m(\tau)$.  The alternality relation is of course
the same:
$$\mathfrak{a}_m(\tau)(u_1,u_2)+\mathfrak{a}_m(\tau)(u_2,u_1)=0.\leqno(FS.1)$$
The push-neutrality relation in depth 2 is given by
$$\frac{1}{u_1u_2}\mathfrak{a}_m(\tau)(u_1,u_2)+\frac{1}{u_2u_0}\mathfrak{a}_m(\tau)(u_2,u_0)+\frac{1}{u_0u_1}\mathfrak{a}_m(\tau)(u_0,u_1)=0
\leqno(FS.2)$$
where $u_0=-u_1-u_2$.  Multiplying by the common denominator 
$u_0u_1u_2$ yields the polynomial relation
$$u_0\mathfrak{a}_m(\tau)(u_1,u_2)+u_1\mathfrak{a}_m(\tau)(u_2,u_0)+
u_2\mathfrak{a}_m(\tau)(u_0,u_1)=0.$$

It was shown in \cite{Matthes:Edzv} Theorem 3.11, that the dimension of the space of 
polynomials in $u_1$ and $u_2$ of odd
degree $d$ satisfying (FS.1) and (FS.2) is given by
$\left\lfloor\frac{d}{3}\right\rfloor+1$.
In terms of the weight $n=d+2$ of the corresponding polynomials 
in $\ff_2$, this is
$$\left\lfloor\frac{n-2}{3}\right\rfloor+1.$$
In weight $5$, for example, there are two independent such polynomials:
$$u_1^2u_2-u_1u_2^2\ \ \ {\rm and}\ \ \ u_1^3-u_2^3.$$
In weight $7$, there are again two independent polynomials, given by
$$u_1^4u_2-u_1u_2^4\ \ \ {\rm and}\ \ \ u_1^5+u_1^3u_2^2-u_1^2u_2^3-u_2^5.$$
In weight $9$, the space is three-dimensional, given by

\vspace{.2cm}
$u_1^7-2u_1^4u_2^3+2u_1^3u_2^4-u_2^7$

$u_1^6u_2-u_1u_2^6$

$u_1^5u_2^2+u_1^4u_2^3-u_1^3u_2^4-u_1^2u_2^5$.
\vskip .1cm
\noindent Finally, we work out the case of weight $11$, where the dimension is four:
\vskip .1cm

$u_1^9+3u_1^5u_2^4-u_1^4u_2^5-u_2^9$

$u_1^8u_2-u_1u_2^8$

$u_1^7u_2^2-u_1^5u_2^4+u_1^4u_2^5-u_1^2u_2^7$

$u_1^6u_2^3+u_1^5u_2^4-u_1^4u_2^5-u_1^3u_2^6$

\vspace{.2cm}
Observe that these dimensions are significantly bigger than those
given by the elliptic double shuffle equations (EDS.1) and (EDS.2)
in depth 2.  This is explained by the fact that the vector space generated
by the coefficients of $\mathfrak{a}_m(\tau)$ in a given weight and
depth is not equal to the one generated by the analogous coefficients
of $\mathfrak{e}_m(\tau)$, or in terms of the algebras, that
while $\overline{A}[2\pi i\tau]=\overline{\Ec}[2\pi i\tau]$ by virtue
of Theorem \ref{ellipticds}, the $\Q$-algebras $\overline{\Ec}$ and $\overline{\Ac}$
are quite different and do not even have the same graded dimensions.

Under the conjecture $\overline{\Zc}\cong \U(\grt)^\vee$, the $\Q$-algebra
$\overline{\Ec}$ is isomorphic to $\U(\grt_{ell})^\vee$, and thus inherits
a natural bigrading dual to that of $\grt_{ell}$.  Together with products
of elements of $\overline{\Ec}$ of smaller depth and weight (including $\Gc_0$),
the coefficients of $\mathfrak{e}_m(\tau)$ in a given weight $n$ and depth 
$d$ span the bigraded part $\overline{\Ec}_n^d$, whereas those of 
$\mathfrak{a}_m(\tau)$ do not.

For example, in weight $5$ and depth $2$, the coefficients of $\mathfrak{e}_m(\tau)$ generate the 1-dimensional space $\langle 2\Gc_{0,4}+\Gc_0\Gc_4\rangle$.
The bigraded subspace $\overline{\Ec}_5^2$ is spanned by 
$\Gc_2^2$, $\Gc_0\Gc_4$ and $\Gc_{0,4}$, but it is also spanned by the two 
products $\Gc_2^2$ and $\Gc_0\Gc_4$ and the single coefficient 
$2\Gc_{0,4}+\Gc_0\Gc_4$ of $\mathfrak{e}_m(\tau)$ in weight $5$ and depth $2$.

The weight $5$, depth $2$ coefficients of $\mathfrak{a}_m(\tau)$, however,
do not lie in $\overline{\Ec}_5^2$. They span the 2-dimensional
subspace $\langle -\frac{1}{12}\Gc_0\Gc_2+\frac{3}{2}\Gc_0\Gc_4
+3\Gc_{0,4}-\frac{1}{360}\Gc_0^2+\frac{1}{2}\Gc_2^2,
\frac{1}{240}\Gc_0^2-2\Gc_{0,4}-\Gc_0\Gc_4\rangle$ of $\overline{\Ec}$.

\vspace{.3cm}
Finally, we give a statement about alternality and push-neutrality relations
on $\mathfrak{a}_m(\tau)$.

\begin{coro}\label{pushneut} The power series
$\mathfrak{A}=[a,\mathfrak{a}(\tau)]$ is push-neutral in the sense that,
if $\mathfrak{A}^r$ denotes the depth $r$ part of $\mathfrak{A}$ for
$r>1$, then
$$A^r+push(A^r)+\cdots+push^r(A^r)=0$$
where $push$ denotes the push-operator on power series defined in 
\eqref{pushop}.
\end{coro}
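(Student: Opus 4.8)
The plan is to transport the statement to moulds via $ma$ and then read off the push-neutrality of $\mathfrak{A}=[a,\mathfrak{a}(\tau)]$ from the push-neutrality of $dar^{-1}\bigl(\mathfrak{a}_m(\tau)\bigr)$ furnished by Theorem \ref{FS}. Since $ma$ is a depth-preserving linear isomorphism intertwining the power series push-operator \eqref{pushop} with the mould push-operator \eqref{pushmould} (this compatibility, which is precisely the reason both operators are called $push$, is part of the mould dictionary of \cite{Schneps:Emzv}), it suffices to prove that $ma(\mathfrak{A})$ is push-neutral in the sense of \eqref{pushneutrality} in each depth $r>1$.

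First I would compute $ma(\mathfrak{A})$ in terms of $\mathfrak{a}_m(\tau)$. Recall from the proof of Proposition \ref{t01pushneutral} that in depth $r$ the operator $\ad(a)$ on $\Q\langle\!\langle C\rangle\!\rangle$ corresponds under $ma$ to multiplication by $-(u_1+\cdots+u_r)$. As $\mathfrak{A}=\ad(a)\bigl(\mathfrak{a}(\tau)\bigr)$ (the bracket of $a$ with an element of $\ff_2$ lands in $\Q\langle\!\langle C\rangle\!\rangle$), this yields, in depth $r$,
\begin{equation*}
ma(\mathfrak{A})=-(u_1+\cdots+u_r)\,\mathfrak{a}_m(\tau).
\end{equation*}
On the other hand, the definitions \eqref{Deltamould} and \eqref{darmould} give, again in depth $r$,
\begin{equation*}
\Delta\bigl(dar^{-1}(\mathfrak{a}_m(\tau))\bigr)=u_1\cdots u_r(u_1+\cdots+u_r)\,\frac{1}{u_1\cdots u_r}\,\mathfrak{a}_m(\tau)=(u_1+\cdots+u_r)\,\mathfrak{a}_m(\tau),
\end{equation*}
so that $ma(\mathfrak{A})=-\Delta\bigl(dar^{-1}(\mathfrak{a}_m(\tau))\bigr)$.

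The last step is to observe that $\Delta$ preserves push-neutrality. Writing $u_0=-(u_1+\cdots+u_r)$, the multiplier defining $\Delta$ in depth $r$ equals $-u_0u_1\cdots u_r$, which is invariant under the cyclic permutation of $(u_0,u_1,\ldots,u_r)$ induced by the mould push-operator; a direct check then shows $push\circ\Delta=\Delta\circ push$ on moulds of depth $r$. Hence, for any push-neutral mould $B$ of depth $r$,
\begin{equation*}
\sum_{i=0}^{r}push^i\bigl(\Delta(B)\bigr)=\Delta\Bigl(\sum_{i=0}^{r}push^i(B)\Bigr)=0,
\end{equation*}
so $\Delta(B)$ is push-neutral as well. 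Taking $B=dar^{-1}(\mathfrak{a}_m(\tau))$, which is push-neutral in depth $r>1$ by Theorem \ref{FS}, we conclude that $ma(\mathfrak{A})=-\Delta(B)$ is push-neutral in depth $r>1$, and therefore that $\mathfrak{A}$ is push-neutral as a power series in each depth $r>1$.

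I expect the only genuinely delicate point to be the compatibility of $ma$ with the two push-operators, since this is where the dictionary between the $(a,b)$-presentation underlying \eqref{pushop} and the $c_i$-presentation underlying $ma$ must be invoked; the computation of $ma(\mathfrak{A})$ and the commutation $push\circ\Delta=\Delta\circ push$ are then short and mechanical.
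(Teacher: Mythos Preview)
Your proof is correct and follows essentially the same route as the paper: both compute $ma(\mathfrak{A})=-\Delta\bigl(dar^{-1}\mathfrak{a}_m(\tau)\bigr)$, observe that the multiplier $-u_0u_1\cdots u_r$ is push-invariant so that $\Delta$ preserves push-neutrality, and then invoke Theorem \ref{FS}. The only difference is that you flag the compatibility of $ma$ with the two push-operators as the delicate step, whereas the paper treats this identification as implicit.
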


\begin{proof} By Theorem \ref{FS}, the mould $dar^{-1}\mathfrak{a}_m(\tau)$
is push-neutral.  Consider the operator
$$-\Delta(A)(u_1,\ldots,u_r)=u_1\cdots u_r(-u_1-\ldots-u_r)A(u_1,\ldots,u_r).$$
Since the factor $u_1\ldots u_r(-u_1-\ldots-u_r)$ is push-invariant,
the mould $-\Delta(A)$ is push-neutral if $A$ is.  Therefore
in particular $-\Delta\bigl(dar^{-1}\mathfrak{a}_m(\tau)\bigr)$
is push-neutral. However, this mould is given by
\begin{align*}-\Delta\bigl(dar^{-1}\mathfrak{a}_m(\tau)\bigr)(u_1,\ldots,u_r)&=
-(u_1+\cdots+u_r)\,\mathfrak{a}_m(\tau)(u_1,\ldots,u_r)\\
&=ma\bigl([a,\mathfrak{a}(\tau)]\bigr)(u_1,\ldots,u_r),
\end{align*}
where the last equality is a standard identity (see Appendix A
of \cite{Racinet:Thesis} or (3.3.1) of \cite{Schneps:ARI}). Therefore
the mould $ma([a,\mathfrak{a}(\tau)])$ is a push-neutral mould, 
that is, $[a,\mathfrak{a}(\tau)]$ is push-neutral as a power series.
\end{proof}

\subsection{Proof of Lemma \ref{aratpushneutral}.}
In order to prove this lemma, we need to have recourse to the complete
formula for the action of $arat$. We first recall \'Ecalle's formula
for $arit$ (cf.~\cite{Ecalle:Flexion} or \cite{Schneps:ARI}), which is given as
$$\bigl(arit(P)\cdot A\bigr)(w)=\sum_{{\scriptscriptstyle\substack{w=abc\\c\ne \emptyset}}}
A(a\lceil c)P(b)-\sum_{{\scriptscriptstyle\substack{w=abc\\a\ne \emptyset}}}A(a\rceil c) P(b),$$
where if the word $u=(u_1,\ldots,u_r)$ is decomposed into three
chunks as $u=abc$, $a=(u_1,\ldots,u_i)$, $b=(u_{i+1},\ldots,u_{i+j})$,
$c=(u_{i+j+1},\ldots,u_r)$, then we use \'Ecalle's notation
$$a\rceil = (u_1,\ldots,u_{i-1},u_i+u_{i+1}+\cdots+u_{i+j})$$
$$\lceil c= (u_{i+1}+\cdots+u_{i+j+1},u_{i+j+2},\ldots,u_r).$$

Moreover 
$$\ad(P)\cdot A=mu(P,A)-mu(A,P)$$ 
where $mu$ is the mould multiplication defined in \eqref{mumould};
these correspond precisely to the `missing' terms $a=\emptyset$ and 
$c=\emptyset$, so that $arat(P)\cdot A$ actually has the simpler expression
\begin{equation}\label{aratPA}
\bigl(arat(P)\cdot A\bigr)(w)=\sum_{w=abc} \bigl(A(a\lceil c)P(b)-A(a\rceil c) P(b)\bigr).
\end{equation}
Now let $A$ be push-neutral, and let $P\in ARI$.  We need to show that 
\eqref{aratPA} is push-neutral.  In fact we will show that the two 
terms 
\begin{equation}    \label{amat}
\sum_{w=abc} A(a\lceil c)P(b)\ \ {\rm and}\ \
\sum_{w=abc} A(a\rceil c) P(b)
\end{equation}
of \eqref{aratPA} are separately push-neutral. 

Because the push-neutrality relations take place in fixed depth, we may assume 
that $A$ is concentrated in depth $s$ and $P$ in depth $t$, with $s+t=r$.
We will prove the push-neutrality of the first term in (\ref{amat}); the proof for the
second term is completely analogous.

Therefore the decompositions $w=abc$ we need to consider are those of the form
$$w=abc=(u_1,\ldots,u_i)(u_{i+1},\ldots,u_{i+t})(u_{i+t+1},\ldots,u_r),$$
and we can rewrite the first term of (\ref{amat}) as
$$\sum_{i=0}^{r-t} A(u_1,\ldots,u_i,u_{i+1}+\cdots+u_{i+t+1},u_{i+t+2},
\ldots,u_r)P(u_{i+1},\ldots,u_{i+t}).$$
The $k$-th power of the push-operator acts by $u_i\mapsto u_{i-k}$,
with indices considered modulo $(r+1)$.  The push-neutrality condition thus reads
$$\sum_{k=0}^r\sum_{i=0}^{r-t} A(u_{1-k},\ldots,u_{i-1-k},u_{i-k},u_{i+1-k}+
\cdots+u_{i+t+1-k},u_{i+t+2-k}, \ldots,u_{r-k})$$
$$\qquad\qquad\qquad\qquad\qquad\cdot P(u_{i+1-k},\ldots,u_{i+t-k})
=0.$$
We will show that the coefficients of each term $P(u_{m+1},\ldots,u_{m+t})$ sum to zero 
due to the push-neutrality of $A$.  In fact it is enough to show that the coefficients of 
$P(u_1,\ldots,u_t)$ sum to zero, as all the other terms are obtained from this one by 
applying powers of the push-operator.  

The terms containing $P(u_1,\ldots,u_t)$ are those for which the index $k=i$, so that 
$k\in \{0,\ldots,r-t=s\}$, and we must show that the sum
$$\sum_{k=0}^s A(u_{r-k+2},\ldots,u_r,u_0,u_1+\cdots+u_{t+1},u_{t+2}, \ldots,u_{r-k+1})$$
vanishes, where $u_0=-u_1-\cdots-u_r$ and we have shifted some of the indices modulo 
$(r+1)$ in order to make them positive.  Note now that
$$u_1+\cdots+u_{t+1}=-u_0-u_{t+2}-\cdots-u_r.$$  
As a result the last sum runs over the $(s+1)$ cyclic permutations of 
$u_{t+2},\ldots,u_r,u_0$ and $-u_{t+2}-\cdots-u_r-u_0$, so it is equal to the sum 
over the push$_s$-orbit of just one term, say the one with $k=s$, that is, to
$$\sum_{k=0}^s push_s^k\bigl(A(u_{t+2},\ldots,u_r,u_0)\bigr).$$ 
However, this indeed vanishes, since $A$ is push-neutral. This concludes the
proof of Lemma \ref{aratpushneutral}.\hfill{$\square$}

\vspace{1cm}

\bibliographystyle{abbrv}
\bibliography{Bibliography}

\end{document}